 \newtheorem{thm}{Theorem}[section]
 \newtheorem{cor}[thm]{Corollary}
 \newtheorem{lem}[thm]{Lemma}
 \newtheorem{prop}[thm]{Proposition}
 \theoremstyle{definition}
 \newtheorem{defn}[thm]{Definition}
 \theoremstyle{rem}
 \newtheorem{rem}[thm]{Remark}
 \newtheorem{ex}[thm]{Example}
 \numberwithin{equation}{section}
\begin{document}

%
%
%
%
%
%
%
%
%

\title[generalized Drazin-Riesz invertible elements]{Generalized Drazin-Riesz invertible elements in a semi-simple Banach algebra}

\author{ Othman Abad and Hassane Zguitti }
\address{Department of Mathematics, Dhar El Mahraz Faculty of Science, Sidi Mohamed Ben Abdellah University, BO 1796 Fes-Atlas, 30003 Fez Morocco.}
\email{othman.abad@usmba.ac.ma}
\email{hassane.zguitti@usmba.ac.ma}
\subjclass[2020]{46H99, 47A10, 47A53}
\keywords{Drazin invertible elements, Riesz elements, Browder elements}


\begin{abstract}
We extend the notion of generalized Drazin-Riesz inverse introduced for bounded linear operators in \cite{Ziv} to elements in a complex unital semi-simple Banach algebra. Several characterizations and properties of generalized Drazin-Riesz invertible elements are given. In particular, we extend those of \cite{AbZg1,Djor,Ziv}.
\end{abstract}

\maketitle

\section{Introduction}

Throughout this paper, $\mathcal{A}$ denotes a complex semi-simple Banach algebra with unit $1$. Let $Inv(\mathcal{A})$ and $\mathcal{J}(\mathcal{A})$ be the set of all invertible elements in $\mathcal{A}$ and the radical Jacobson of $\mathcal{A}$, respectively. Let $\mathcal{B}$ be a Banach subalgebra of $\mathcal{A}$ with unit $1_{\mathcal{B}}$. In the case of Banach subalgebras, we generally have $1_{\mathcal{B}} \neq 1$, especially for the next Banach subalgebras $p\mathcal{A}p$ and $(1-p)\mathcal{A}(1-p)$ which are of wide importance in this paper, $p$ and $1-p$ are the units of $p\mathcal{A}p$ and $(1-p)\mathcal{A}(1-p)$ respectively.
\smallskip

 We define respectively the spectrum, the resolvent both related to $\mathcal{B}$ and the spectral radius of $a \in \mathcal{A}$ by
\begin{enumerate}
\item $\sigma_{\mathcal{B}}(a)=\{ \lambda \in \mathbb{C} \ : \ \lambda 1_{\mathcal{B}} - a \mbox{ is not invertible in } \mathcal{B} \}$, if $\mathcal{B}= \mathcal{A}$, then we write $\sigma(a)$ instead of $\sigma_{\mathcal{A}}(a)$.
\item $\rho_{\mathcal{B}}(a)=\{ \lambda \in \mathbb{C} \ : \ \lambda 1_{\mathcal{B}} - a \mbox{ is invertible in } \mathcal{B}\}=\mathbb{C} \setminus \sigma_{\mathcal{B}}(a)$, if $\mathcal{B}=\mathcal{A}$, we simply write $\rho(a)$ rather than $\rho_{\mathcal{A}}(a)$.
\item The spectral radius $r(a)= \underset{n \rightarrow \infty}{\lim} ||a^{n}||^{\frac{1}{n}}$.
\end{enumerate}
 Let $S_{\mathcal{A}}$ be the socle of $\mathcal{A}$ and $\overline{S}_{\mathcal{A}}$ be its closure in $\mathcal{A}$. An element $a\in\mathcal{A}$ is said to be {\it Fredholm} if $\pi(a)$ is invertible in $\mathcal{A}/\overline{S}_{\mathcal{A}}$, where $\pi\,:\mathcal{A}\rightarrow\mathcal{A}/\overline{S}_{\mathcal{A}}$ is the canonical quotient homomorphism. Let $\Phi_{\mathcal{A}}$ be the set of all Fredholm elements in $\mathcal{A}$. The {\it essential spectrum} $\sigma_e(a)$ of $a$ is defined by $$\sigma_e(a)=\{\lambda\in\mathbb{C}\,:\,\lambda-a\notin \Phi_{\mathcal{A}}\}.$$
Here and elsewhere, we write $\lambda-a$ instead of $\lambda 1-a$. $a\in\mathcal{A}$ is a {\it Riesz element} if $\pi(a)$ is quasinilpotent in $\mathcal{A}/\overline{S}_{\mathcal{A}}$.
\smallskip

For $a\in \mathcal{A}$, let $L_a$ and $R_a$ be the multiplication operators on $\mathcal{A}$ defined respectively by $L_a(b)=ab$ and $R_a(b)=ba$ for all $b\in\mathcal{A}$.
We denote by $asc_{l}(a)$ (resp. $asc_{r}(a)$) the ascent of $L_{a}$ (resp. the ascent of $R_{a}$), and by $dsc_{l}(a)$ (resp. $dsc_{r}$)  the descent of $L_{a}$ (resp. the descent of $R_{a}$). Recall that for a bounded linear operator $T$ acting on a complex Banach space $X$, the {\it ascent} $asc(T)$ (resp. the {\it descent} $dsc(T)$) of $T$ is the smallest non-negative integer $n$ such that the kernel (resp. the range) of $T^n$ is equal to the kernel (resp. the range) of $T^{n+1}$. If no such integer exists we set $asc(T)=\infty$ (resp. $dsc(T)=\infty$). It is well known that if $asc(T)<\infty$ and $desc(T)<\infty$ then $asc(T)=dsc(T)$ (\cite{Taylor}).
\smallskip

From \cite[Corollary 3.8]{Pearlman}, we have
$$\{ a \in \Phi_{\mathcal A} \, : \, asc_{r}(a)=dsc_{r}(a) < \infty \}=\{ a \in \Phi_{\mathcal A} \, : \, asc_{l}(a)=dsc_{l}(a) < \infty\}.
$$
An element $a\in\mathcal{A}$ is said to be {\it Browder} if $a\in\Phi_{\mathcal{A}}$, $asc_l(a)<\infty$ and $dsc_l(a)<\infty$. The Browder spectrum of $a$ is defined by $$\sigma_b(a)=\{\lambda\in\mathbb{C}\,:\,\lambda-a\mbox{ is not Browder in }\mathcal{A}\}.$$
The set $p_{00}(a)=\sigma(a) \setminus \sigma_{b}(a)$ is the Riesz points of $\sigma(a)$.\\
In the case of a Banach subalgebra $\mathcal{B}$ of $\mathcal{A}$, we denote and define the Browder spectrum of an element $a \in \mathcal{B}$ by $$\sigma_{b,\mathcal{B}}(a)=\{\lambda \in \mathbb{C} \, : \, \lambda 1_{\mathcal{B}}-a \mbox{ is not a Browder element in } \mathcal{B} \}. $$
The Browder resolvent set of an element $a \in \mathcal{B}$ related to a Banach subalgebra $\mathcal{B}$ of $\mathcal{A}$ is $\rho_{b,\mathcal{B}}(a)=\mathbb{C}\setminus\sigma_{b,\mathcal{B}}(a).$\\
Following \cite[Theorem 3.9]{Ying},  $a\in\mathcal{A}$ is Browder such that $a \in \Phi_{\mathcal A}$ and $0 \notin acc \, \sigma(a)$. Then
$$\sigma_b(a)=\sigma_e(a)\cup acc\,\sigma(a).$$
Here and elsewhere for $\Lambda\subset\mathbb{C}$, $iso\,\Lambda$, $acc\,\Lambda$ and $int\,\Lambda$ denote respectively, the set of isolated points, the set of accumulation points and the interior of $\Lambda$. We denote by $D(\lambda,r)$ the open disc centered at $\lambda\in\mathbb{C}$ and with radius $r>0$ and its corresponding closed disc by $\overline{D}(\lambda,r)$.
\smallskip

Let $p\in\mathcal{A}$ be an idempotent. For each $a\in\mathcal{A}$,  $$a=(1-p)a(1-p)+(1-p)ap+pa(1-p)+pap.$$ Then $a$ can be represented as follows
\begin{equation}\label{Pier}
a=\left(\begin{array}{cc}
   (1-p)a(1-p) &(1-p)ap \\
pa(1-p) &pap
\end{array}\right)_p.
\end{equation}
Moreover, if $p$ commute with $a$ then
\begin{equation} \label{Pier2}
a=\left(\begin{array}{cc}
   a(1-p) &0 \\
0 &ap
\end{array}\right)_p.
\end{equation}
Also, if $ap \in p\mathcal{A}p$ (resp. $a(1-p) \in (1-p)\mathcal{A}(1-p)$) is invertible in $p\mathcal{A}p$ (resp. $(1-p)\mathcal{A}(1-p)$), we denote its inverse by  $(a)^{-1}_{p\mathcal{A}p}$ (resp. $(a)^{-1}_{(1-p)\mathcal{A}(1-p)}$).\\
It is well-known that for an idempotent $p$ in $\mathcal{A}$, and for every element $a$ commuting with $p$, we have
\begin{equation}
\rho(a)=\rho_{(1-p)\mathcal{A}(1-p)}(a(1-p)) \cap \rho_{p\mathcal{A}p}(ap).
\end{equation}
See \cite[Lemma 2.1]{Djor} for $\rho(a) \subset \rho_{p\mathcal{A}p}(ap)\cap \rho_{(1-p)\mathcal{A}(1-p)}(a(1-p))$, for the other inclusion, we consider $\lambda \in \rho_{p\mathcal{A}p}(ap) \cap \rho_{(1-p)\mathcal{A}(1-p)}(a(1-p))$ and we take
$$(\lambda-a)^{-1}=(\lambda p- ap)_{p\mathcal{A}p}^{-1}p+(\lambda(1-p)-a(1-p))_{(1-p)\mathcal{A}(1-p)}^{-1}(1-p).)$$
As a consequence, if $pa=ap$ then $\sigma(a)=\sigma_{p\mathcal{A}p}(ap) \cup \sigma_{(1-p)\mathcal{A}(1-p)}(a(1-p))$.
\smallskip

Let $a\in\mathcal{A}$ and assume that there exists a spectral set $\sigma$ of $a$. Then the spectral idempotent $p_\sigma$ of $a$ corresponding to $\sigma$ is $$p_\sigma={1\over 2\pi i}\int_\Gamma(\lambda-a)^{-1}d\lambda,$$ where $\Gamma$ is a contour surrounding $\sigma$. It is easy to see that $p_\sigma$ commutes with each element which commutes with $a$. Moreover $a$ has the form (\ref{Pier2}).
\smallskip

An element $a\in\mathcal{A}$ is {\it Drazin invertible} such that there exists $b\in\mathcal{A}$ satisfying \begin{equation}\label{equDra} ab=ba,\,bab=b\mbox{ and } a-a^2b\mbox{ is nilpotent}.\end{equation}
The element $b$ is unique if it exists and we denote it by $b=a^D$ and we call it the {\it Drazin inverse} of $a$ \cite{Dra}. If $a$ is not invertible then $a$ has a Drazin inverse if and only if zero is a pole of $a$.
\smallskip

The class of the Drazin invertible elements was extended by Koliha \cite{Kol1}. An element $a\in\mathcal{\mathcal A}$ is {\it generalized Drazin invertible} such that there exists an element $b\in\mathcal{\mathcal A}$ satisfying
\begin{equation}\label{eqgDr} ab=ba,\,bab=b\mbox{ and }a-a^2b\mbox{ is quasinilpotent}.
\end{equation}
Such element $b$  is unique if exists and will be denoted by $b=a^{gD}$ and called the {\it generalized Drazin inverse} of $a$. If $a$ is not invertible then $a$ is Drazin invertible if and only if zero is an isolated point of $\sigma(a)$, (\cite{Kol1}).\\
The element $p=1-aa^{gD}$ is the spectral idempotent of $a$ corresponding to $\{0\}$. Then $a$ and $a^{gD}$ are given by
\begin{equation}\label{PiergD}
a=\left(\begin{array}{cc}
   a(1-p) &0 \\
0 &ap
\end{array}\right)_{p}\mbox{ and }a^{gD}=\left(\begin{array}{cc}
   (a(1-p))^{-1} &0 \\
0 &0
\end{array}\right)_{p}.
\end{equation}
\smallskip
The (generalized) Drazin inverse has applications in computations and matrix theory, Markov chains, linear systems theory, differential equations, and so on, see \cite{Abd,Ben,Camp,Camp2,Nash} and the references therein.

Following Živkovi\'c-Zlatanovi\'c and Cvetkovi\'c \cite{Ziv}, a bounded linear operator $T$ on a complex Banach space $X$ is said to be {\it generalized Drazin-Riesz invertible} if there exists a bounded linear operator $S$ on $X$ satisfying
\begin{equation}\label{eqgDr-R} TS=ST,\,STS=S\mbox{ and }T-T^2S\mbox{ is Riesz}.
\end{equation}
If such $S$ exists then it is called a {\it generalized Drazin-Riesz inverse} of $T$. Many properties of such generalized inverses can be found in \cite{AbZg1,Ziv}.
\smallskip

 In this paper we extend to concept of generalized Drazin-Riesz inverse to Banach algebras. Preliminary results are presented in section 2. In section 3, we introduce and study generalized Drazin-Riesz invertible elements in a semi-simple Banach algebra by extending \cite[Theorem 2.3]{Ziv}. In section 4, we focus our study to generalized Drazin-Riesz inverses. In particular, we generalize many results of \cite{AbZg1}. Section 5 is devoted to show that the class of generalized Drazin-Riesz elements forms a regularity in the sense of M\"{u}ller, which is a generalization of \cite{AbZg3}. Finally, section 6 holds results that link two generalized Drazin-Riesz invertible elements, most of these results extend those of \cite{Djor}.

\section{Preliminary results}
In this section, we show some preliminary results which are very useful in the sequel.
\begin{thm} \label{TheoRieszstability}
Let $a,b \in {\mathcal A}$. Then the following statements hold:
\begin{enumerate}
\item[i)] If $a$ and $b$ are commuting Riesz elements, then $a+b$ is a Riesz element in ${\mathcal A}$;
\item[ii)] If $a$ is a Riesz and $b$ commutes with $a$, then the product $ab$ (or/and $ba$) is a Riesz element in ${\mathcal A}$.
\end{enumerate}
\end{thm}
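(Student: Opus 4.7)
The plan is to lift both assertions to the quotient Banach algebra $\mathcal{A}/\overline{S}_{\mathcal{A}}$ via the canonical homomorphism $\pi$, and then reduce everything to the classical stability properties of commuting quasinilpotent elements in a Banach algebra.

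First, I would recall the definition stated in the introduction: $a\in\mathcal{A}$ is Riesz exactly when $\pi(a)$ is quasinilpotent in $\mathcal{A}/\overline{S}_{\mathcal{A}}$, i.e.\ $r(\pi(a))=0$. Since $\pi$ is an algebra homomorphism, $\pi(a)$ and $\pi(b)$ commute whenever $a$ and $b$ do, and
\[
\pi(a+b)=\pi(a)+\pi(b),\qquad \pi(ab)=\pi(a)\pi(b)=\pi(b)\pi(a)=\pi(ba).
\]
So both parts of the theorem reduce to the claim that the sum and the product of two commuting quasinilpotent elements of the Banach algebra $\mathcal{A}/\overline{S}_{\mathcal{A}}$ are again quasinilpotent.

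For part (i), I would invoke the standard inequality $r(x+y)\le r(x)+r(y)$ valid for commuting elements of any Banach algebra (a consequence of the spectral mapping theorem for polynomials in commuting elements, or equivalently of Gelfand's formula applied inside the closed commutative subalgebra generated by $x$ and $y$). Taking $x=\pi(a)$ and $y=\pi(b)$ gives $r(\pi(a+b))=0$, so $a+b$ is Riesz. For part (ii), I would use the analogous inequality $r(xy)\le r(x)r(y)$ for commuting $x,y$; applied to $\pi(a)$ and $\pi(b)$ it yields $r(\pi(ab))=0$, and similarly for $\pi(ba)$, so $ab$ and $ba$ are Riesz.

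There is essentially no obstacle here: the only nontrivial ingredient is the submultiplicativity and subadditivity of the spectral radius on commuting pairs, which is classical. The one point to be careful about is the transfer of the commutativity hypothesis: if $ab=ba$ in $\mathcal{A}$ then automatically $\pi(a)\pi(b)=\pi(b)\pi(a)$ in the quotient, but not conversely, so we use the hypothesis only in the easy direction. I would keep the argument short, stating the two inequalities explicitly and concluding in one line for each item.
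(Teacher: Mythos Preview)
Your argument is correct: passing to the quotient $\mathcal{A}/\overline{S}_{\mathcal{A}}$ via $\pi$ reduces both assertions to the classical subadditivity and submultiplicativity of the spectral radius on commuting pairs, and you apply these correctly (in particular, for (ii) you only need $r(\pi(a))=0$, not that $b$ is Riesz).

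The paper does not give its own proof at all; it simply cites \cite[Theorem R.1.2]{Barnes2}. Your self-contained argument is precisely the standard way this result is established and is almost certainly what the cited reference does as well, so there is no meaningful divergence in approach.
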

\begin{proof} See \cite[Theorem R.1.2]{Barnes2}.
\end{proof}

Let $\mathcal{I}_{\mathcal{A}}:=\pi^{-1}(\mathcal{J}(\mathcal{A}/\overline{S}_{\mathcal{A}}))$ be the closed two sided ideal of inessential elements in $\mathcal{A}$ \cite{Pearlman}. From \cite{Barnes1}, for a two sided ideal $M$ satisfying $S_{\mathcal A} \subset M \subset \mathcal{I}_{\mathcal A}$, we have
 $$\Phi_{\mathcal{\mathcal A}}=\{ a \in \mathcal{\mathcal A} \ : \ a+M \ \mbox{is invertible in } \mathcal{A}/M \}.$$
Hence $a$ is a Browder element if and only if $a+M$ is invertible in $\mathcal{A}/M$ and $0 \notin acc\, \sigma(a)$.
\smallskip

The following lemma is a slight generalization of \cite[Lemma 3.12]{Ying}.

\begin{lem} \label{Lemma 3.12G}
For a  two sided ideal $M$ such that $S_{\mathcal A} \subset M \subset \mathcal{I}_{\mathcal A}$. Let $a \in \mathcal{A}$, then
$$\sigma_{b}(a)=\bigcap_{\underset{au=ua}{u \in M}}\sigma(a+u)=\sigma(a) \setminus \{ \lambda \in \mathbb{C}: \ a-\lambda \in \Phi_{\mathcal A}\mbox{ and }0 \in iso\, \sigma(a-\lambda) \}.$$
\end{lem}
\begin{proof}
Let $\lambda \notin \sigma(a)\setminus \{ \lambda \in \mathbb{C}: \ a-\lambda \in \Phi_{\mathcal A} \mbox{ and }  0 \in iso\, \sigma(a-\lambda) \}$. If $\lambda \notin \sigma(a)$ then $\lambda\notin\displaystyle{\bigcap_{\underset{au=ua}{u \in M}}}\sigma(a+u)$. So assume that $\lambda \in \sigma(a)$, $a-\lambda \in \Phi_{\mathcal A}$ and $0 \in iso\, \sigma(a-\lambda)$.
Hence $a-\lambda$ is a Browder element. In other words, $$a-\lambda \in \Phi_{\mathcal A},\,asc_{l}(a-\lambda)< \infty\mbox{ and } dsc_{l}(a-\lambda)<\infty.$$ By virtue of  \cite[Theorem 3.7]{Pearlman}, $\lambda$ is a Riesz point of $\sigma(a)$.
So by \cite[Theorem 3.5]{Pearlman}, the spectral idempotent $p$ for $a$ at $\lambda$ is in $S_{\mathcal A} \subset M$ and $\lambda - (a + p)$ is invertible in $\mathcal{A}$. Hence $a(-p)=(-p)a, \ -p \in M$ and $\lambda - a +(-p)$ is invertible. Thus $$\lambda\notin\bigcap_{\underset{au=ua}{u \in M}}\sigma(a+u).$$ Therefore, $$\bigcap_{\underset{au=ua}{u \in M}}\sigma(a+u)\subseteq \sigma(a) \setminus \{ \lambda \in \mathbb{C}: \ a-\lambda \in \Phi_{\mathcal A} \mbox{ and }  0 \in iso\, \sigma(a-\lambda) \}.$$

Conversely, assume that $\lambda \notin \displaystyle{\bigcap_{\underset{au=ua}{u \in M}}}\sigma(a+u)$.
Then there exists $u \in M$ such that $ua=au$, $a-\lambda+u$ is invertible in $\mathcal{A}$. Hence, by \cite[Theorem 3.9]{Pearlman}, we conclude that $\lambda$ is a Riesz point, thus
$$
\lambda \notin \sigma(a)\setminus \sigma_{b}(a)=\sigma(a) \setminus \{ \lambda \in \mathbb{C}: \ a-\lambda \in \Phi_{\mathcal A}\mbox{ and } \ 0 \in iso\, \sigma(a-\lambda) \}.
$$
\end{proof}

The following is a generalization of \cite[Proposition 3.13]{Ying}.

\begin{prop} \label{Proposition 3.13G}
Let $M$ be a two sided ideal such that $S_{\mathcal A} \subset M \subset \mathcal{I}_{\mathcal A}$. An element $a \in {\mathcal A}$ is  Browder if and only if $a=b+c$ where $b \in Inv({\mathcal A})$, $c \in M$ and $bc=cb$.
\end{prop}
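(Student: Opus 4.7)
The plan is to reduce the statement directly to Lemma \ref{Lemma 3.12G}, which at $\lambda=0$ already characterizes Browder elements through the existence of a commuting perturbation in $M$: namely, $0\notin\sigma_{b}(a)$ iff there exists $u\in M$ with $au=ua$ and $a+u\in Inv(\mathcal{A})$. The proposition is essentially this characterization reformulated via the substitution $b=a+u$, $c=-u$ (equivalently, $u=-c$).

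For the forward implication, I would assume $a$ is Browder, so $0\notin\sigma_{b}(a)$, and invoke Lemma \ref{Lemma 3.12G} to obtain $u\in M$ with $au=ua$ and $a+u\in Inv(\mathcal{A})$. Setting $b:=a+u$ and $c:=-u$, I have $a=b+c$, $b\in Inv(\mathcal{A})$, and $c\in M$. The commutation $bc=cb$ is then a two-line check using $au=ua$:
\[
bc=(a+u)(-u)=-au-u^{2}=-ua-u^{2}=-u(a+u)=cb.
\]

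For the converse, given a decomposition $a=b+c$ of the stated form, I would set $u:=-c\in M$, so that $a+u=b\in Inv(\mathcal{A})$. The hypothesis $bc=cb$ translates immediately into $au=ua$ by the same calculation read in reverse. Lemma \ref{Lemma 3.12G} then yields $0\notin\sigma_{b}(a)$, i.e., $a$ is Browder.

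There is no genuine obstacle here: the hard content, namely the construction of a socle idempotent witnessing $a+u\in Inv(\mathcal{A})$ via Pearlman's results, has already been absorbed into the proof of Lemma \ref{Lemma 3.12G}. The present proposition is therefore a clean reformulation, and the only remaining task is the elementary algebraic equivalence $bc=cb \Leftrightarrow au=ua$ under the correspondence $u=-c$, $b=a+u$.
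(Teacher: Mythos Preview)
Your proposal is correct and follows essentially the same approach as the paper: both directions are reduced to Lemma~\ref{Lemma 3.12G} at $\lambda=0$ via the substitution $b=a+u$, $c=-u$. You are slightly more explicit than the paper in verifying the equivalence $bc=cb \Leftrightarrow au=ua$, which the paper leaves implicit.
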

\begin{proof}
Assume that $a$ is a Browder element. Then by Lemma \ref{Lemma 3.12G}, $0 \notin \displaystyle{\bigcap_{\underset{au=ua}{u \in M}}}\sigma(a+u)$. So there exists an element $u \in M$ such that $au=ua$ and $a+u$ is invertible in ${\mathcal A}$. Set $b=a+u$ and $c=-u$, then $a=b+c$.
\smallskip

Conversely, let $a=b+c$ where $b \in Inv({\mathcal A})$, $c \in M$ and $bc=cb$. Then $a-c$ is invertible, $-c \in M$ and $b(-c)=(-c)b$. Therefore
$$0 \notin \bigcap_{\underset{au=ua}{u \in M}}\sigma(a+u).$$
Which means that $a$ is a Browder element.
\end{proof}

If $T: {\mathcal A} \longrightarrow \mathcal{B}$ is a bounded homomorphism of normed algebras $\mathcal{A}$ and $\mathcal{B}$, then $a \in \mathcal{A}$ is  called {\it $T$-Browder} if
$$a \in \{ c+d \, : \, c \in Inv(\mathcal{A}), Td=0\mbox{ and }cd=dc \}.$$

\begin{lem} \label{lem1}
Let $a$ and $b \in \mathcal{\mathcal A}$ such that $ab=ba$. Then
$$ab \mbox{ is Browder if and only if } a\mbox{ and }b \mbox{ are Browder }.$$
\end{lem}
\begin{proof}
By Proposition \ref{Proposition 3.13G} and taking $M=\overline{S}_{\mathcal A}$, an element $a$ is a Browder in ${\mathcal A}$ if and only if $a=b+c$ where $b \in Inv(\mathcal{A})$, $c \in \overline{S}_{\mathcal A} $ and $bc=cb$. Consequently, $a$ is  Browder in ${\mathcal A}$ if and only if  $a$ is $\pi$-Browder.
\smallskip

Now, since $a$ and $b$ commute, we apply \cite[Theorem 7.7.6]{Harte} to obtain  that  $ab$ is a Browder element in ${\mathcal A}$ if and only if $a$ and $b$ are Browder elements in ${\mathcal A}$.
\end{proof}

\begin{lem} \label{Browderdecompidempotent}
Let $a \in \mathcal{A}$ and $p$ be an idempotent such that $ap=pa$. Then
$a$ is Browder in $\mathcal{A}$ if and only if $ap$ and $a(1-p)$ are Browder elements respectively in $p\mathcal{A}p$ and $(1-p)\mathcal{A}(1-p)$.
\end{lem}
\begin{proof}
Suppose that $a$ is a Browder element in $\mathcal{A}$. If $a$ is invertible, then $ap$ and $a(1-p)$ are invertible respectively in $p\mathcal{A}p$ and $(1-p)\mathcal{A}(1-p)$, then they are Browder elements respectively in $p\mathcal{A}p$ and $(1-p)\mathcal{A}(1-p)$.\\
Now, if $a$ is Browder but not invertible. Then $a \in \Phi_{\mathcal{A}}$ and $0 \in iso\, \sigma(a)$. This is equivalent to say that $\pi(a)$ is invertible in $\mathcal{A} / \overline{S}_{\mathcal{A}}$ and $0 \in iso\, \sigma(a)$. Thus
                    $$\pi(ap)=\pi(a)\pi(p) \mbox{ and } \pi(a(1-p))=\pi(a)\pi(1-p)$$
are invertible respectively  in $\pi(p)\left(\mathcal{A}/\overline{S}_{\mathcal{A}}\right)\pi(p)$ and $\pi(1-p)\left(\mathcal{A}/\overline{S}_{\mathcal{A}}\right)\pi(1-p)$, with respective inverses $(\pi(a))^{-1}\pi(p)$ and $(\pi(a))^{-1}\pi(1-p)$.\\
Also, we have $\sigma(a)=\sigma_{p\mathcal{A}p}(ap)\cup \sigma_{(1-p)\mathcal{A}(1-p)}(a(1-p))$  with $0\in iso\, \sigma(a)$, then  $0\notin acc\, \sigma_{p\mathcal{A}p}(ap)$ and $0\notin acc\, \sigma_{(1-p)\mathcal{A}(1-p)}(a(1-p))$.
Therefore we conclude that $ap$ and $a(1-p)$ are Browder elements respectively in $p\mathcal{A}p$ and $(1-p)\mathcal{A}(1-p)$.
\smallskip

Conversely, if $ap$ and $a(1-p)$ are Browder in $p\mathcal{A}p$ and $(1-p)\mathcal{A}(1-p)$ respectively, then $ap \in \Phi_{p\mathcal{A}p}$, $a(1-p) \in \Phi_{(1-p)\mathcal{A}(1-p)}$, $0 \in iso\, \sigma_{p\mathcal{A}p}(ap)$ and $0 \in iso\, \sigma_{(1-p)\mathcal{A}(1-p)}(a(1-p))$. As $ap \in \Phi_{p\mathcal{A}p}$ and $a(1-p) \in \Phi_{(1-p)\mathcal{A}(1-p)}$, we conclude that $\pi(ap)$ and $\pi(a(1-p))$ are invertible respectively in $\pi(p)\left(\mathcal{A}/\overline{S}_{\mathcal{A}}\right)\pi(p)$ and $\pi(1-p)\left(\mathcal{A}/\overline{S}_{\mathcal{A}}\right)\pi(1-p)$. Thus, $\pi(a)=\pi(a)\pi(p)+\pi(a)\pi(1-p)$ is invertible in $\mathcal{A}/ \overline{S}_{\mathcal{A}}$. Also, as $0 \in iso\, \sigma_{p\mathcal{A}p}(ap)$ and $0 \in iso\, \sigma_{(1-p)\mathcal{A}(1-p)}(a(1-p))$, we get $0 \in iso\, \sigma(a)$.
Finally, $a \in \Phi_{\mathcal{A}}$ and $0 \in iso\, \sigma(a)$, hence $a$ is a Browder element in $\mathcal{A}$.
\end{proof}

The following result is an immediate consequence of the above result.
\begin{cor} \label{CoroDecompofBrowderSpect} Let $a\in\mathcal{A}$ and let $p\in\mathcal{A}$ be an idempotent which commutes with $a$. Then $$\sigma_b(a)=\sigma_{b,(1-p) \mathcal{A} (1-p)}(a(1-p))\cup\sigma_{b, p \mathcal{A} p}(ap).$$
\end{cor}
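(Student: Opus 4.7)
The plan is to reduce the set equality to the statement of Lemma \ref{Browderdecompidempotent} applied pointwise to each $\lambda-a$, $\lambda\in\mathbb{C}$. The key observation is that since $p$ commutes with $a$ and trivially with every scalar, it commutes with $\lambda-a$ for every $\lambda\in\mathbb{C}$, so the lemma applies to $\lambda-a$ in place of $a$. I will prove the equality by taking complements and showing $\lambda\notin\sigma_b(a)$ if and only if $\lambda\notin\sigma_{b,p\mathcal{A}p}(ap)\cup\sigma_{b,(1-p)\mathcal{A}(1-p)}(a(1-p))$.

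First I would fix $\lambda\in\mathbb{C}$ and rewrite $(\lambda-a)p=\lambda p-ap=\lambda 1_{p\mathcal{A}p}-ap$, because $p$ is the unit of $p\mathcal{A}p$; similarly $(\lambda-a)(1-p)=\lambda 1_{(1-p)\mathcal{A}(1-p)}-a(1-p)$. Then Lemma \ref{Browderdecompidempotent} applied to $\lambda-a$ gives that $\lambda-a$ is Browder in $\mathcal{A}$ if and only if $\lambda 1_{p\mathcal{A}p}-ap$ is Browder in $p\mathcal{A}p$ and $\lambda 1_{(1-p)\mathcal{A}(1-p)}-a(1-p)$ is Browder in $(1-p)\mathcal{A}(1-p)$. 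Translating into spectra, this is exactly
\[
\lambda\notin\sigma_b(a)\ \Longleftrightarrow\ \lambda\notin\sigma_{b,p\mathcal{A}p}(ap)\ \text{and}\ \lambda\notin\sigma_{b,(1-p)\mathcal{A}(1-p)}(a(1-p)),
\]
which, passing to complements in $\mathbb{C}$, yields the desired union decomposition.

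There is essentially no obstacle here; the proof is purely formal once Lemma \ref{Browderdecompidempotent} is in hand. The only mild point requiring attention is the bookkeeping of the units $1_{p\mathcal{A}p}=p$ and $1_{(1-p)\mathcal{A}(1-p)}=1-p$ when matching $(\lambda-a)p$ and $(\lambda-a)(1-p)$ with the standard form $\lambda 1_{\mathcal{B}}-x$ that defines a Browder spectrum relative to a subalgebra $\mathcal{B}$. Once this identification is made, the corollary follows at once.
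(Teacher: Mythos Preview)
Your argument is correct and is exactly the approach the paper has in mind: the corollary is stated there as an immediate consequence of Lemma~\ref{Browderdecompidempotent}, obtained by applying that lemma to $\lambda-a$ for each $\lambda\in\mathbb{C}$ and using $(\lambda-a)p=\lambda 1_{p\mathcal{A}p}-ap$, $(\lambda-a)(1-p)=\lambda 1_{(1-p)\mathcal{A}(1-p)}-a(1-p)$. There is nothing to add.
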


The next Theorem is inspired from \cite[Theorem 2.1]{Tran}.

\begin{thm}\label{thm1}
Let $a \in{\mathcal A}$ be a non invertible element, where ${\mathcal A}$ is a unital Banach algebra (not necessarily semi-simple), and let $\sigma$ be a bounded set of $\mathbb{C}$ containing $0$.
Then $\sigma$ is a spectral set of $a$ if and only if there is a nonzero idempotent $p\in\mathcal{\mathcal A}$ such that
\begin{enumerate}
\item[(i)] $pa=ap$;
\item[(ii)] $\sigma(ap)= \sigma$;
\item[(iii)] $a-\mu  - \xi p$ is invertible for all $\mu \in \sigma$ and for some/all $\xi \in \mathbb{C}$ such that $| \xi | > 2r$, where $r=\underset{\lambda \in \sigma}{\sup}|\lambda |$.
\end{enumerate}
If (i)-(iii) hold, then $p$ is the spectral idempotent of $a$ corresponding to $\sigma$.
\end{thm}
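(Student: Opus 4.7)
The plan is to reduce both implications to the block decomposition \eqref{Pier2} together with the identity $\rho(c) = \rho_{p\mathcal{A}p}(cp) \cap \rho_{(1-p)\mathcal{A}(1-p)}(c(1-p))$ recalled earlier in the paper, valid whenever an idempotent $p$ commutes with $c$. The key algebraic observation I will exploit is that, since $p$ commutes with $a$,
$$a - \mu - \xi p \;=\; \bigl( a(1-p) - \mu(1-p) \bigr) \;+\; \bigl( ap - (\mu + \xi) p \bigr),$$
which is block-diagonal in the corner decomposition. Hence invertibility of $a - \mu - \xi p$ in $\mathcal{A}$ is equivalent to the simultaneous invertibility of the first summand in $(1-p)\mathcal{A}(1-p)$ and of the second in $p\mathcal{A}p$.

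For the forward direction I take $p := p_\sigma$, the spectral idempotent associated to $\sigma$. Property (i) is standard and (ii) is built into the Riesz decomposition: $\sigma_{p\mathcal{A}p}(ap) = \sigma$, while $\sigma_{(1-p)\mathcal{A}(1-p)}(a(1-p)) = \tau$ with $\sigma(a) = \sigma \sqcup \tau$. For (iii) I fix $\mu \in \sigma$ and $|\xi| > 2r$; the first summand above is invertible because $\mu \notin \tau$ (disjointness of the two pieces of $\sigma(a)$), and the second because $|\mu + \xi| \geq |\xi| - |\mu| > r$, so $\mu + \xi$ lies outside the radius-$r$ disk that contains $\sigma$.

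Conversely, assume (i)-(iii) and set $\tau := \sigma_{(1-p)\mathcal{A}(1-p)}(a(1-p))$; by (i) and (ii) we already have $\sigma(a) = \sigma \cup \tau$. Feeding (iii) into the block identity, the invertibility of $a - \mu - \xi p$ forces in particular the invertibility of the first summand for every $\mu \in \sigma$, i.e.\ $\sigma \cap \tau = \emptyset$. Thus $\sigma$ and $\tau$ are disjoint closed subsets of $\sigma(a)$, so $\sigma$ is clopen in $\sigma(a)$, that is, a spectral set. To finally identify $p$ with $p_\sigma$, I will compute
$$p_\sigma \;=\; \tfrac{1}{2\pi i}\int_\Gamma (\lambda - a)^{-1}\,d\lambda$$
along a contour $\Gamma$ enclosing $\sigma$ and avoiding $\tau$, using the block form of the resolvent: the $p\mathcal{A}p$-block integrates to $p$ by Cauchy's formula inside that corner, while the $(1-p)\mathcal{A}(1-p)$-block vanishes by Cauchy's theorem. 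The only genuinely nontrivial point in the whole argument is the role of the numerical gap $|\xi| > 2r$ -- this bound is exactly what prevents the shifted value $\mu + \xi$ from landing back in $\sigma$ and thereby spoiling condition (iii); once that is noted, every remaining step is mechanical.
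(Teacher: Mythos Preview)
Your argument is correct and follows essentially the same route as the paper's proof: both decompose $a-\mu-\xi p$ into its $p\mathcal{A}p$- and $(1-p)\mathcal{A}(1-p)$-corners and read off the required invertibility from the identity $\rho(c)=\rho_{p\mathcal{A}p}(cp)\cap\rho_{(1-p)\mathcal{A}(1-p)}(c(1-p))$, with the paper citing \cite{KolgsDr} for the forward direction where you work it out directly (and you add the explicit contour computation for $p=p_\sigma$, which the paper merely asserts). One small caution: condition (ii) concerns the ambient spectrum $\sigma(ap)$ in $\mathcal{A}$, not the corner spectrum $\sigma_{p\mathcal{A}p}(ap)$; these differ a priori by the point $0$ via $\sigma(ap)=\sigma_{p\mathcal{A}p}(ap)\cup\{0\}$ (\cite[Lemma~6]{Aupetit}), so you should invoke the hypothesis $0\in\sigma$ explicitly when passing between them, as the paper does.
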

\begin{proof}
Assume that $\sigma$ is a spectral set of $a$ containing 0. Let $p=p_{\sigma}$ be the spectral idempotent corresponding to $\sigma$. Then, the condition (i) holds. \\
Also, considering  \cite[Theorem 1.2]{KolgsDr}, we have $\sigma_{p\mathcal{A}p}(ap)=\sigma$, and by \cite[Lemma 6]{Aupetit} $\sigma(ap)=\sigma_{p\mathcal{A}p}(ap)\cup\{0\}=\sigma \cup \{0\}$. As $0 \in \sigma$, we get $\sigma(ap)=\sigma$. Now, it follows from the proof of \cite[Theorem 1.4]{KolgsDr} that for $\xi \in \mathbb{C}$ such that $|\xi| > 2r$, and $\mu  \in \sigma$, with $r=\sup\{|\lambda| \ : \ \lambda \in \sigma \}$, we have $a-\mu-\xi p$ is invertible in $\mathcal{A}$.
\smallskip

Conversely, let $p$ be an idempotent satisfying (i)-(iii). Set $a_{1}=a(1-p)$ and $a_{2}=ap$, then $a=a_{1}+a_{2}$.
By (iii), there exists $\xi\in\mathbb{C}$ with $| \xi | > 2r$ such that $a- \xi p$ is invertible. Since $(a-\xi p)(1-p)=a_1$, then $a_1$ is invertible in $(1-p)\mathcal{A}(1-p)$. This ensures that $0 \notin \sigma_{(1-p)\mathcal{A}(1-p)}(a_{1})$.
\smallskip

Now let $\mu \in \sigma$, by (iii) we have $a- \mu - \xi p$ is invertible in $\mathcal{A}$, and as
$$a- \mu - \xi p = (a_{1} - \mu (1-p)) + (a_{2} - (\mu + \xi ) p)=(a + \xi p - \mu )(1-p)+(a-\mu + \xi p)p$$
we conclude that $(a + \xi p - \mu )(1-p)$ and $(a-\mu + \xi p)p$ are invertible in $(1-p)\mathcal{A}(1-p)$ and $p \mathcal{A}p$ respectively. Hence, $\sigma \subset \rho_{(1-p)\mathcal{A}(1-p)}(a_{1})$. Therefore, $\sigma_{(1-p)\mathcal{A}(1-p)}(a_{1}) \cap \sigma_{p\mathcal{A}p}(a_{2})=\emptyset$, and we have $\sigma \cup \sigma_{(1-p)\mathcal{A}(1-p)}(a_{1})=\sigma(a)$. Thus we conclude that $\sigma$ is a spectral set of $a$ containing $0$, and $p$ is the corresponding spectral idempotent of $a$ related to $\sigma$.
\end{proof}

\section{Generalized Drazin-Riesz invertible elements in a semi-simple Banach algebra}
Now we introduce  and we characterize the concept of generalized Drazin-Riesz invertible elements in a semi-simple Banach algebra.
\begin{defn}
An element $a \in \mathcal{\mathcal A}$ is said to be {\it generalized Drazin-Riesz invertible} if there exists $b\in \mathcal{\mathcal A}$ satisfying
$$ab=ba, \ bab=b\mbox{ and } a-a^2b \mbox{ is a Riesz element in }\mathcal{\mathcal A}.$$
$1-ab$ is an idempotent, called the idempotent associated with the generalized Drazin-Riesz inverse $b$ of $a$.
\end{defn}
We shall say that $a \in \mathcal{\mathcal A}$ is  {\it Riesz-quasi-polar} if there exists an idempotent $q\in\mathcal{A}$ satisfying
$$aq=qa,\,q \in (\mathcal{\mathcal A}a)\cap(a\mathcal{\mathcal A})\mbox{ and } a(1-q)\mbox{ is a Riesz element}.$$
This extend the Riesz quasi-polarity for bounded linear operators introduced in \cite{Ziv}.
\smallskip

In the following we give several characterizations for generalized Drazin-Riesz invertible elements in $\mathcal{A}$.
\begin{thm}\label{theok}
Let $a \in \mathcal{\mathcal A}$. Then the following conditions are equivalent:
\begin{enumerate}
\item[(i)] There exists a commuting idempotent $p \in \mathcal{\mathcal A}$  with $a$, such that $a+p$ is invertible and $ap$ is Riesz;
\item[(ii)] There exists a commuting idempotent $p \in \mathcal{\mathcal A}$  with $a$, such that $a_{1}=a(1-p)$ is invertible in $(1-p){\mathcal A}(1-p)$ and $a_{2}=ap$ is a Riesz element in ${\mathcal A}$ with $a=a_{1}+a_{2}$;
\item[(iii)] $a$ has a generalized Drazin-Riesz inverse in $\mathcal{A}$;
\item[(iv)] $a$ is a Riesz-quasi-polar element in $\mathcal{A}$;
\item[(v)] There exists a commuting idempotent $p \in \mathcal{\mathcal A}$  with $a$, such that $a+p$ is Browder and $ap$ is a Riesz element;
\item[(vi)] There exists a commuting idempotent $p \in \mathcal{A}$  with $a$, such that
 $a=a(1-p)+ap$, with $a(1-p)$ is Browder in $(1-p)\mathcal{A}(1-p)$ and $ap$ is a Riesz element in $\mathcal{A}$;
\item[(vii)] $0 \notin  acc \, \sigma_{b}(a)$.
\end{enumerate}
\end{thm}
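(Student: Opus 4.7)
I would prove the equivalence by running two interlocking loops of implications that share the condition (i). The first, (i) $\Rightarrow$ (v) $\Rightarrow$ (vi) $\Rightarrow$ (vii) $\Rightarrow$ (i), is spectral-theoretic; the second, (ii) $\Rightarrow$ (iii) $\Rightarrow$ (iv) $\Rightarrow$ (ii), is purely algebraic; and the trivial steps (i) $\Rightarrow$ (ii) and (ii) $\Rightarrow$ (vi) weld the two loops together.

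For the spectral loop, (i) $\Rightarrow$ (v) is immediate since an invertible element is Browder. For (v) $\Rightarrow$ (vi), $p$ commutes with $a+p$ and $(a+p)(1-p)=a(1-p)$, so Lemma \ref{Browderdecompidempotent} applied to $a+p$ yields $a(1-p)$ Browder in $(1-p)\mathcal{A}(1-p)$. For (vi) $\Rightarrow$ (vii), Corollary \ref{CoroDecompofBrowderSpect} gives
\[
\sigma_b(a)=\sigma_{b,(1-p)\mathcal{A}(1-p)}(a(1-p))\cup\sigma_{b,p\mathcal{A}p}(ap),
\]
and the first set avoids $0$ by hypothesis. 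For the second, each nonzero $\lambda\in\sigma_{p\mathcal{A}p}(ap)$ is an isolated point of $\sigma_\mathcal{A}(ap)$ (as $ap$ is Riesz), and its spectral idempotent lies in $S_\mathcal{A}\cap p\mathcal{A}p\subseteq S_{p\mathcal{A}p}$, so $\lambda p-ap$ is Browder in $p\mathcal{A}p$; hence $\sigma_{b,p\mathcal{A}p}(ap)\subseteq\{0\}$ and $0\notin acc\,\sigma_b(a)$.

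For the algebraic loop, (ii) $\Rightarrow$ (iii) is direct: take $b=(a(1-p))^{-1}_{(1-p)\mathcal{A}(1-p)}$ and verify $ab=ba=1-p$, $bab=b$, and $a-a^2b=ap$ is Riesz. For (iii) $\Rightarrow$ (iv), set $q:=ab$; using $bab=b$ and $ab=ba$, one checks $q^2=q$, $q\in a\mathcal{A}\cap\mathcal{A}a$, $aq=qa$, and $a(1-q)=a-a^2b$ is Riesz. For (iv) $\Rightarrow$ (ii), set $p:=1-q$ and choose $c_0,d_0\in\mathcal{A}$ with $ac_0=q=d_0a$. Put $c:=qc_0q$ and $d:=qd_0q$, both in $q\mathcal{A}q$; using $qa=aq$ one finds $ac=q=da$, and then $d=d\cdot q=d(ac)=(da)c=qc=c$, so $c$ is a two-sided inverse of $aq$ in $q\mathcal{A}q=(1-p)\mathcal{A}(1-p)$, while $ap=a(1-q)$ is Riesz by hypothesis. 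The step (ii) $\Rightarrow$ (vi) is then immediate because invertibility in $(1-p)\mathcal{A}(1-p)$ is a fortiori the Browder property there.

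The main obstacle is (vii) $\Rightarrow$ (i). Assuming $0\notin acc\,\sigma_b(a)$, choose $r\in(0,1)$ with $\overline{D(0,r)}\cap\sigma_b(a)\subseteq\{0\}$ and $\partial D(0,r)\cap\sigma(a)=\emptyset$; such $r$ exists because the nonzero points of $\sigma(a)$ inside a small disc about $0$ are Riesz, hence isolated, and their moduli avoid all but countably many radii. Then $\sigma_0:=\sigma(a)\cap\overline{D(0,r)}$ is a spectral set of $a$ containing $0$; let $p:=p_{\sigma_0}$ be its spectral idempotent, so $pa=ap$, $\sigma_{p\mathcal{A}p}(ap)=\sigma_0$ and $\sigma_{(1-p)\mathcal{A}(1-p)}(a(1-p))=\sigma(a)\setminus\sigma_0$. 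Since $0\in\sigma_0$, $a(1-p)$ is invertible in $(1-p)\mathcal{A}(1-p)$; and since $r<1$, $\sigma_{p\mathcal{A}p}(ap+p)=1+\sigma_0$ avoids $0$, so $(a+1)p$ is invertible in $p\mathcal{A}p$, and consequently $a+p$ is invertible in $\mathcal{A}$. The delicate point is that $ap$ is Riesz: comparing the Laurent expansions of $(\mu-a)^{-1}$ and $(\mu-ap)^{-1}$ at any $\lambda\in\sigma_0\setminus\{0\}$, which differ only by a summand holomorphic at $\lambda$, identifies the spectral idempotent of $ap$ at $\lambda$ with that of $a$, and the latter lies in $S_\mathcal{A}$ because $\lambda$ is a Riesz point of $\sigma(a)$. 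Thus $\sigma_e(ap)\subseteq\{0\}$, so $ap$ is Riesz, completing (i).
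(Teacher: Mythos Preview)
Your proof is correct and complete, but it is organized differently from the paper's and makes a few different technical choices.

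\textbf{Structure.} The paper runs a single cycle
(i)$\Rightarrow$(ii)$\Rightarrow$(iii)$\Rightarrow$(iv)$\Rightarrow$(v)$\Rightarrow$(vi)$\Rightarrow$(vii)$\Rightarrow$(i),
whereas you split the argument into two loops welded by (i)$\Rightarrow$(ii) and (ii)$\Rightarrow$(vi). Both schemes cover all seven statements.

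\textbf{The step through (iv).} The paper passes from (iv) to (v): writing $1-p=ua=av$ it computes $(a+p)(uav+p)=1+ap$, which is Browder by \cite[Corollary~4.13]{Pearlman}, hence $a+p$ is Browder via Lemma~\ref{lem1}. You instead go (iv)$\Rightarrow$(ii) by a purely algebraic construction of a two-sided inverse of $aq$ in $q\mathcal{A}q$. Your route avoids Fredholm theory at this stage and is more elementary; the paper's route feeds directly into the ``Browder'' statements (v)--(vi).

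\textbf{(vii)$\Rightarrow$(i).} Both arguments isolate a spectral set about $0$ consisting of $0$ together with Riesz points of $a$, and take $p$ to be its spectral idempotent. The paper does this via a case analysis on $acc\,p_{00}(a)$ and then appeals to Theorem~\ref{thm1}; your single choice of a radius $r\in(0,1)$ with $\partial D(0,r)\cap\sigma(a)=\emptyset$ is tidier. For the key claim that $ap$ is Riesz, the paper relies (here and again in Theorem~\ref{expinv}) on Pearlman's criterion, while your Laurent-expansion comparison showing that the spectral idempotent of $ap$ at each nonzero $\lambda\in\sigma_0$ coincides with that of $a$ is a self-contained alternative.

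\textbf{One point to tighten.} In your (vi)$\Rightarrow$(vii), the inclusion $S_{\mathcal A}\cap p\mathcal{A}p\subseteq S_{p\mathcal{A}p}$ is true (indeed $S_{p\mathcal{A}p}=pS_{\mathcal A}p$), but it is not proved in the paper. You can sidestep it entirely: since $ap$ is Riesz in $\mathcal A$, each $\lambda\neq 0$ makes $\lambda-ap$ Browder in $\mathcal A$, and Lemma~\ref{Browderdecompidempotent} then gives $\lambda p-ap$ Browder in $p\mathcal{A}p$, so $\sigma_{b,p\mathcal{A}p}(ap)\subseteq\{0\}$ directly. This is closer to how the paper handles the same point.
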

\begin{proof}
(i) $\Rightarrow$ (ii): Let $p \in \mathcal{A}$ be a commuting idempotent with $a$ such that $a+p$ is invertible and $ap$ is a Riesz element. We set
 $$a_{1}=(a+p)(1-p)=a(1-p)\mbox{ and }a_{2}=ap.$$ Then $a=a_{1}+a_{2}$, $a_{1}$ is invertible in $(1-p)\mathcal{\mathcal A}(1-p)$ and $a_{2}$ is Riesz.
 \smallskip

\noindent (ii) $\Rightarrow$ (iii): Let $p \in \mathcal{A}$ be an idempotent  satisfying $ap=pa$, $a_{1}=a(1-p)$ is invertible in $(1-p)\mathcal{A}(1-p)$ and $a_{2}=ap$ is Riesz in $\mathcal{A}$.\\
Set $b=a_{1}^{-1}$ in $(1-p)\mathcal{\mathcal A}(1-p)$, (i.e $b=(a_{1})_{(1-p)\mathcal{A}(1-p)}^{-1}$).  Then
 $$ p=1-ab, ab=ba \mbox{ and } bab=b.$$
 Furthermore,
 \begin{align*}
    a-a^2b=a-a^2((a_{1})_{(1-p)\mathcal{A}(1-p)}^{-1}(1-p))&=a-a^{2}(1-p)(a_{1})_{(1-p)\mathcal{A}(1-p)}^{-1}(1-p)\\
                                               &=a-a_{1}(1-p)=a_{2}.
    \end{align*}
Hence, $a-a^2b$ is a Riesz element in ${\mathcal A}$. Therefore $b$ is generalized Drazin-Riesz inverse of $a$.
\smallskip

\noindent (iii) $\Rightarrow$ (iv): There exists $b\in \mathcal{A}$ satisfying
$$ab=ba, \ bab=b\mbox{ and } a-a^2b \mbox{ is a Riesz element in }\mathcal{A}.$$
Set $q=ab$. Then $q$ is an idempotent which satisfies $qa=aq$, $q=ba=ab \in (\mathcal{A}a) \cap (a\mathcal{A})$ and $a(1-q)=a-a^{2}b$ is Riesz.
\smallskip

\noindent (iv) $\Rightarrow$ (v): There exists  $q=q^2$ such that $qa=aq$, $q \in (a\mathcal{A})\cap(\mathcal{A}a)$ and $a(1-q)$ is Riesz. Set $p=1-q$. Hence there exist $u$ and $v \in \mathcal{A}$ such that $1-p=av=ua$. Thus
 $$(a+p)(uav+p)=1+ap=(uav+p)(a+p).$$
 Since $ap$ is Riesz, by  \cite[Corollary 4.13]{Pearlman}, $1+ap$ is Browder and according to Lemma \ref{lem1}, we conclude that $a+p$ is also Browder.
 \smallskip

\noindent (v) $\Rightarrow$ (vi): Let $p^2=p \in {\mathcal A}$ such that $ap=pa$, $a+p$ is Browder
  and $ap$ is a Riesz element. Since $a+p=(a+p)(1-p)+(a+p)p$, then $(a+p)(1-p)$ is Browder in $(1-p)\mathcal{A}(1-p)$. Also from equality $(a+p)(1-p)=a(1-p)$, we have $a(1-p)$ is Browder in $(1-p){\mathcal A}(1-p)$. By assumption, $ap$ is Riesz.
  \smallskip

\noindent  (vi) $\Rightarrow$ (vii): Assume that there exists an idempotent $q$ such that $a=a(1-q)+aq,$ where $a_{1}=a(1-q)$ is Browder in $(1-q)\mathcal{A}(1-q)$ and $a_{2}=aq$ is Riesz.\\
   The case $0 \notin acc( \sigma(a))$ is obvious. So assume that $0 \in acc\, \sigma(a)$. By  virtue of \cite[Corollary 4.8]{Pearlman}, there exists a sequence $(\lambda_{n})$ in $\sigma(a)$ representing $\sigma(a_{2})$ (i.e., $\sigma(a_{2})=\{0,\lambda_{1},\lambda_{2},...\}$) such that $\lambda_{n}$ is a Riesz point for all $n \in \mathbb{N}$ and $\lambda_{n} \longrightarrow 0,\ \mbox{as} \ n \longrightarrow \infty$, and using  \cite[Lemma 6]{Aupetit}, we get $\sigma(a_{2})=\sigma_{p\mathcal{A}p}(a_{2})\cup \{0\}$, therefore $\sigma_{p\mathcal{A}p}(a_{2})\setminus \{0\}=\{\lambda_{1},\lambda_{2},....\}$, by compacity of $\sigma_{p\mathcal{A}p}(a_{2})$, we deduce that $\sigma_{p\mathcal{A}p}(a_{2})=\sigma(a_{2})$. Hence $\sigma_{b,p\mathcal{A}p}(a_{2})=\{0\}$. \\
We have by Corollary \ref{CoroDecompofBrowderSpect}
\begin{align*}
\sigma_{b}(a) &= \sigma_{b,(1-p)\mathcal{A}(1-p)}(a(1-p))\cup \sigma_{b,p\mathcal{A}p}(ap) \\
           &= \sigma_{b,(1-p)\mathcal{A}(1-p)}(a_{1}) \cup \sigma_{b,p\mathcal{A}p}(a_{2})
\end{align*}
As $\sigma_{b,p\mathcal{A}p}(a_{2})=\{0\}$ and $0 \in \rho_{b,(1-p)\mathcal{A}(1-p)}(a_{1})$,  there exists $\epsilon >  0$ such that $$D(0,\epsilon) \setminus \{0\} \subset \rho_{b,(1-p)\mathcal{A}(1-p)}(a_{1})\cap \rho_{b,p\mathcal{A}p}(a_{2}).$$
Also by Lemma \ref{Browderdecompidempotent} we have  $\rho_{b,(1-p)\mathcal{A}(1-p)}(a_{1})\cap \rho_{b,p\mathcal{A}p}(a_{2})=\rho_{b}(a)$.
Thus $D(0,\epsilon)\setminus \{0\} \subset \rho_{b}(a)$, finally, $0 \notin acc\, \sigma_{b}(a)$.
\smallskip

\noindent (vii) $\Rightarrow$ (i): Suppose that $0 \notin  acc \, \sigma_{b}(a)$. If $0 \notin  acc \, \sigma(a)$, $a$ is generalized Drazin invertible and in particular, it is generalized Drazin-Riesz invertible element. The result follows at once from \cite[Theorem 3.1 ]{Kol1}.
 \smallskip

So assume that $0 \in acc \, \sigma(a)$. Then $0 \in acc(\sigma(a) \setminus \sigma_{b}(a))=acc\, p_{00}(a)$. If there exist $0\neq \alpha\in acc\, p_{00}(a) $, then
 $$\eta = \underset{\gamma \in (acc p_{00}(a)) \setminus \{0\} }{\inf } ( | \gamma |) >0.$$
 Indeed, if $\eta = 0$, there exists a sequence $(\mu_{n})$ of $(acc p_{00}(a)) \setminus \{0\}$ which converges to zero. But $(\mu_{n}) \subset (acc p_{00}(a)) \setminus \{0\} \subset acc \, \sigma(a) \subset \sigma_{b}(a)$, which implies $0 \in  acc \, \sigma_{b}(a)$, which is a contradiction.\\
 Thus, by taking $ \eta_{1}=\frac{\eta}{m}$ for a sufficiently large $m$, we can from a sequence of the elements of $\overline{D}(0,\eta_{1}) \cap p_{00}(a)$ which converges to $0$. Consequently, $\omega(a)=\overline{D}(0,\eta_{1}) \cap \overline{p_{00}(a)} $ and $(\sigma_{b}(a) \setminus \{0\}) \cup ( \overline{p_{00}(a)}\setminus \omega(a))$ are closed disjoint sets in the spectrum $\sigma(a)$.
 As $$\sigma(a)=[(\sigma_{b}(a) \setminus \{0\}) \cup ( \overline{p_{00}(a)}\setminus \omega(a))] \cup \omega(a),$$ we conclude that $\omega(a)$ is a spectral set of $\sigma(a)$ containing $0$. Therefore, applying Theorem \ref{thm1} for $\xi=-1$ and $\mu=0$, we obtain that $a+p_{\omega(a)}$ is invertible and $ap_{\omega(a)}$ is Riesz.
 \smallskip

 Now if $ acc \, p_{00}(a)=\{0\}$, there exists $\mu \in p_{00}(a)$ such that $|\mu|= \underset{\lambda \in p_{00}(a)}{\sup} (| \lambda |)$. For $\eta_{2}= \min(|\mu|, \frac{1}{2})$, we can choose $\omega(a)= \overline{D}(0,\frac{\eta_{2}}{2}) \cap \overline{p_{00}(a)}$. Therefore, $\omega(a)$ is a spectral set containing $0$.
Finally, by Theorem \ref{thm1}, we conclude that $a+p_{\omega(a)}$ is invertible and $ap_{\omega(a)}$ is Riesz.
\end{proof}

\begin{rem}\label{RemarkDrazin}\rm If $a\in\mathcal{A}$ is generalized Drazin-Riesz invertible with a generalized Drazin-Riesz inverse $b\in\mathcal{A}$, then $b$ is Drazin invertible and $b^D=a^2b$. Indeed, we have $bab=b$, $ab=ba$ and $a-a^2b$ is Riesz. Set $c=a^2b$. Then $bc=cb$ and $cbc=a^2b b a^2b=aba^2b=a^2b=c$. Also $b-b^2c=b-b^2a^2b=b-b=0$ is nilpotent. Therefore $b$ is Drazin invertible and $b^D=c$.
\end{rem}

 Here we give examples of generalized Drazin-Riesz elements in an semi-simple Banach algebra $\mathcal{A}$ which is different from the Banach algebra of bounded linear operators.

\begin{ex}\rm
Let $E=C([0,1])$ be the Banach algebra of all continuous complex valued functions on $[0,1]$ equipped with the supremum norm. Let $\mathcal{\mathcal A}=(E,E)$ be the J\"orgens Algebra (see \cite{Barnes} for definition and more details). Then every integral operator with a continuous kernel acting in the dual system $$<f,g>=\int_{0}^{1}f(t)g(t)dt,\, \forall f,g \in E,$$ is a compact element in $\mathcal{\mathcal A}$ (see  \cite[Exercise 5.24 (d)]{Jorgens}). Therefore, they are generalized Drazin-Riesz elements in $\mathcal{\mathcal A}$. Notice that every J\"orgens algebra is a semi-simple Banach algebra \cite{Barnes}.
\end{ex}

One might expect that we can omit the generalized Kato-Riesz decomposition (see \cite{Ziv} for the definition) in assertions (ii) and (ix) of \cite[Theorem 2.3]{Ziv}, but this is not true as shown in the following example.

\begin{ex}\rm
Let $S$ be the bilateral shift operator on $\ell^{2}(\mathbb{Z})$. It is well known that $\sigma(S)=\{ \lambda \in \mathbb{C} \ : \ |\lambda|=1 \ \}$. Hence for the operator $T=I-S$, $0 \notin int \, \sigma(T)$ (respectively  $0 \notin int \, \sigma_{b}(T)$). As $\sigma(S)=\sigma_{b}(S)$, we obtain that $0 \in acc \, \sigma_{b}(T)$, and $\sigma(T)=\sigma_{b}(T)$. Finally, according to  Theorem \ref{theok} (or \cite[Theorem 2.9]{AbZg1}), $T$ is not generalized Drazin-Riesz invertible .
\end{ex}

\section{Characterization of generalized Drazin-Riesz inverses}

The following definition is equivalent to \cite[Definition 2.2]{KolgsDr} given in more general setting, which is inspired from \cite[Definition 2.2]{Tran}. It will enable us to construct generalized Drazin-Riesz inverses for a generalized Drazin-Riesz element $a\in\mathcal{A}$.

\begin{defn} \label{def}
Let $a \in \mathcal{A}$ be a non invertible element with a spectral set $\sigma$ containing 0 and the corresponding spectral idempotent $p_{\sigma}$.  We define the \textit{Drazin inverse of} $a$ \textit{related to} $\sigma$ by
$$a^{D,\sigma}=(a-\xi p_{\sigma})^{-1}(1-p_{\sigma}),$$
where $\xi \in \mathbb{C}$ such that $| \xi | > 2r$ where $r=\underset{\lambda \in \sigma}{\sup} | \lambda |$.
\end{defn}

Under the condition $| \xi | > 2r$,  $a-\xi p_{\sigma}$ is invertible by virtue of Theorem \ref{thm1}, $a^{D,\sigma}$ is independent of the choice of the value of $\xi$ and $p_{\sigma}=1-aa^{D,\sigma}$.
\smallskip

Let $a \in \mathcal{A}$ be generalized Drazin-Riesz invertible such that $0 \in  acc \, \sigma(a)$.
From the proof of (vii) $\Rightarrow$ (i) in Theorem \ref{theok} , there exists $\eta > 0$ such that $$\sigma(a)=[(\sigma_{b}(a) \setminus \{0\}) \cup (\overline{p_{00}(a)} \setminus \omega(a)) ] \cup \omega(a),$$ where $\omega(a)=\overline{D}(0,\eta) \cap \overline{p_{00}(a)}$.
 Also,  $\omega(a)$ is a closed infinite and countable set, we represent it as follow, $\omega(a)=\{0,\lambda_{1}, \lambda_{2},...\}$, where $(|\lambda_{n}|)_{n}$ is a strictly decreasing sequence of Riesz points of $a$ that converge to 0.\\
For $n \in \mathbb{N}$, let $\sigma_{n}$ and $\sigma_{n}'$ be the closed sets defined by
$\sigma_{n}=\{ 0, \lambda_{n+1},\lambda_{n+2},...\}$ and
$$\sigma_{n}'= [(\sigma_{b}(a) \setminus \{0\}) \cup (\overline{p_{00}(a)} \setminus \omega(a)) ] \cup \{ \lambda_{1},\lambda_{2},...,\lambda_{n}\}=\sigma(a) \setminus \sigma_{n}.$$
\begin{thm} \label{expinv}
Let $a \in \mathcal{A}$ be generalized Drazin-Riesz invertible such that $0 \in  acc \, \sigma(a)$. Then for $n$ large enough, we have
$$a^{D,\sigma_{n}}=(a-p_{\sigma_{n}})^{-1}(1-p_{\sigma_{n}})$$
is a generalized Drazin-Riesz inverse for $a$.
\end{thm}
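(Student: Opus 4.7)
The strategy is to verify directly that $b := a^{D,\sigma_n} = (a - p_{\sigma_n})^{-1}(1 - p_{\sigma_n})$ satisfies the three defining identities of a generalized Drazin-Riesz inverse. First I would check that $\sigma_n$ is indeed a spectral set of $a$ containing $0$: since $\sigma_n$ and $\sigma_n'$ are disjoint closed sets whose union is $\sigma(a)$, $\sigma_n$ is clopen in $\sigma(a)$. Writing $p_n = p_{\sigma_n}$ for the associated spectral idempotent and noting $|\lambda_k| \to 0$, for $n$ large enough one has $r_n := \sup_{\lambda \in \sigma_n}|\lambda| = |\lambda_{n+1}| < 1/2$, so $\xi = 1$ satisfies $|\xi| > 2r_n$. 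Theorem \ref{thm1}(iii) (applied at $\mu = 0$) then guarantees that $a - p_n$ is invertible in $\mathcal{A}$, and Definition \ref{def} identifies $b$ as $a^{D,\sigma_n}$.

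Next I would pass to the Peirce decomposition of $a$ relative to $p_n$. Since $\sigma_n$ is a spectral set of $a$ containing $0$, Theorem \ref{thm1} together with the splitting $\sigma(a) = \sigma_{p_n\mathcal{A}p_n}(ap_n) \cup \sigma_{(1-p_n)\mathcal{A}(1-p_n)}(a(1-p_n))$ yields $\sigma_{(1-p_n)\mathcal{A}(1-p_n)}(a(1-p_n)) = \sigma_n'$, which does not contain $0$; hence $a(1-p_n)$ is invertible in $(1-p_n)\mathcal{A}(1-p_n)$. A short block-diagonal computation then gives
\[
(a-p_n)^{-1} = (a(1-p_n))^{-1}_{(1-p_n)\mathcal{A}(1-p_n)} + ((a-1)p_n)^{-1}_{p_n\mathcal{A}p_n},
\]
and multiplication on the right by $(1-p_n)$ kills the second summand, leaving $b = (a(1-p_n))^{-1}_{(1-p_n)\mathcal{A}(1-p_n)} \in (1-p_n)\mathcal{A}(1-p_n)$. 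From this the identities $ab = ba = 1 - p_n$ and $bab = b$ follow immediately.

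It remains to verify that $a - a^2 b$ is a Riesz element. A direct computation gives $a - a^2b = a(1-ab) = a p_n$, so the task reduces to showing that $ap_n$ is Riesz in $\mathcal{A}$. For this I would exploit that $\omega(a) = \sigma_n \sqcup \{\lambda_1,\ldots,\lambda_n\}$ is a disjoint union of spectral sets of $a$, so by additivity of the holomorphic functional calculus,
\[
p_{\omega(a)} = p_n + \sum_{k=1}^{n} p_{\{\lambda_k\}}.
\]
Each $\lambda_k$ with $1 \leq k \leq n$ is a Riesz point of $a$, so by \cite[Theorem 3.5]{Pearlman} the spectral idempotent $p_{\{\lambda_k\}}$ lies in $S_{\mathcal{A}}$. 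Therefore $p_{\omega(a)} - p_n \in S_{\mathcal{A}} \subseteq \overline{S}_{\mathcal{A}}$, which gives $\pi(a p_n) = \pi(a p_{\omega(a)})$. The proof of the implication (vii) $\Rightarrow$ (i) in Theorem \ref{theok} showed that $a p_{\omega(a)}$ is Riesz, i.e.\ $\pi(a p_{\omega(a)})$ is quasinilpotent; therefore $\pi(a p_n)$ is quasinilpotent and $a p_n$ is Riesz, completing the verification.

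The delicate step is the last one: intuitively, $a p_n$ ought to be Riesz because $\sigma(a p_n) = \sigma_n$ consists of $0$ together with Riesz points of $a$ accumulating only at $0$, but the passage from ``$\lambda_k$ is a Riesz point of $a$'' to ``$\lambda_k$ is a Riesz point of $a p_n$ in $\mathcal{A}$'' is not formal. The trick is to replace $p_n$ with the nearby idempotent $p_{\omega(a)}$---which differs from $p_n$ by a finite sum of socle spectral projections---and transfer the Riesz property already established for $a p_{\omega(a)}$ in the proof of Theorem \ref{theok}.
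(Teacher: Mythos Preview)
Your proof is correct. The algebraic verification that $b=a^{D,\sigma_n}$ satisfies $ab=ba$ and $bab=b$ is the same as the paper's, only carried out via the Peirce decomposition rather than by direct manipulation of $(a-p_{\sigma_n})^{-1}(1-p_{\sigma_n})$. The genuine difference lies in the argument that $ap_{\sigma_n}$ is Riesz. The paper proceeds directly: Theorem~\ref{thm1} gives $\sigma(ap_{\sigma_n})=\sigma_n$, whose nonzero points form a sequence of Riesz points of $a$ converging to $0$, and then \cite[Corollary~4.13]{Pearlman} is invoked to conclude that $ap_{\sigma_n}$ is Riesz. Your route instead compares $p_{\sigma_n}$ with the idempotent $p_{\omega(a)}$ from the proof of Theorem~\ref{theok}, observes that their difference is a finite sum of spectral idempotents at Riesz points and hence lies in $S_{\mathcal{A}}$, and transfers the Riesz property of $ap_{\omega(a)}$ (already established there) through the quotient map $\pi$. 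Your approach has the merit of making explicit what the paper's citation leaves implicit---namely, why knowing that the $\lambda_k$ are Riesz points of $a$ suffices for a conclusion about $ap_{\sigma_n}$---at the cost of relying on the internal construction of $\omega(a)$ from a previous proof. The paper's approach is shorter and packaged in a single external reference.
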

\begin{proof} For $n$ be large enough such that
    $r_{n}=\underset{\lambda \in \sigma_{n}}{\sup}|\lambda| < \frac{1}{2}$,
  $(a-p_{\sigma_{n}})^{-1}$ exists by Theorem \ref{thm1}. Moreover, we have $p_{\sigma_{n}}=1-aa^{D,\sigma_{n}}$. Now we prove that $a^{D, \sigma_{n}}$ is a generalized Drazin-Riesz inverse for $a$.
\smallskip

 It follows from Theorem \ref{thm1} that $\sigma(ap_{\sigma_{n}})=\sigma_{n}$. Since $\sigma_{n} \setminus \{0\}=\{\lambda_{i} \, : \, i \in \mathbb{N} \setminus \{1,..., n \} \}$ is a sequence of Riesz points of $a$ which converges to zero, then we conclude by \cite[Corollary 4.13]{Pearlman} that $ap_{\sigma_{n}}$ is a Riesz element. Hence $a(1-aa^{D,\sigma_{n}})=ap_{\sigma_{n}}$ is a Riesz element.
\smallskip

Also, we have $aa^{D,\sigma_{n}}=a^{D,\sigma_{n}}a$, and
  \begin{align*}
  a^{D,\sigma_{n}}aa^{D,\sigma_{n}} &=(a-p_{\sigma_{n}})^{-1}(1-p_{\sigma_{n}})a(a-p_{\sigma_{n}})^{-1}(1-p_{\sigma_{n}})\\
                                    &=(a-p_{\sigma_{n}})^{-2}(1-p_{\sigma_{n}})(a-p_{\sigma_{n}}+p_{\sigma_{n}}) \\
                                    &=(a-p_{\sigma_{n}})^{-1}(1-p_{\sigma_{n}}) \\
                                    &= a^{D,\sigma_{n}}.
   \end{align*}
   Finally, $a^{D,\sigma_{n}}$ is  a generalized Drazin-Riesz inverse of $a$.
\end{proof}
In the case where $0 \in \ iso\, \sigma_{b}(a)$, we mean by an appropriate $n_{0}$, every sufficiently large positive integer $n_{0}$ such that $\sigma_{n_{0}}$ is contained on a certain disk $D(0,r)$ ; where $r < \frac{1}{4}$, and $D(0,r) \cap \sigma_{n_{0}}'= \emptyset$. In the sequel we denote by $\Lambda_{n_{0}}$ the set $\Lambda_{n_{0}}=D(0,r)\setminus \sigma_{n_{0}}$.

\begin{thm} \label{holomorphicfctandDR}
	Let $a \in \mathcal{A}$ be generalized Drazin-Riesz invertible such that $0 \in  acc \, \sigma(a)$. Then, there exist some $n_{0} \in \mathbb{N}$ and a holomorphic function $h$ on some open neighborhood of $\sigma(a)$ such that $a^{D,\sigma_{n_{0}}}=h(a)$. Moreover, $$\sigma(a^{D,\sigma_{n_{0}}})=\{0\} \cup \{ \lambda^{-1} \ : \ \lambda \in (\sigma(a) \setminus \sigma_{n_{0}})=\sigma_{n_{0}}' \}.$$
\end{thm}
\begin{proof}
	Since $a$ is generalized Drazin-Riesz invertible, then by Theorem \ref{theok}, $0 \notin  acc \, \sigma_{b}(a)$. Hence, there is some $\varepsilon > 0$ such that $D(0,\varepsilon) \cap \sigma_{b}(a)= \emptyset$. If we choose $n_{0}$ and $\varepsilon$ such that $\sigma_{n_{0}} \cap D(0,\varepsilon)=\sigma_{n_{0}}$, and $D(0,\varepsilon) \cap \sigma_{n_{0}}'= \emptyset$, we have $D(0,\varepsilon)$ is an open neighborhood of $\sigma_{n_{0}}$, and $(\mathbb{C} \setminus \overline{D(0,\varepsilon)})$ is an open neighborhood of $\sigma_{n_{0}}'$. Now by \cite[Proposition 2.6]{KolgsDr} and the presentation before it, we obtain $a^{D,\sigma_{n_{0}}}=h(a)$, where $h$ is the holomorphic function $h$ on $\mathbb{C}$ defined by 0 in the open neighborhood $D(0,\varepsilon)$ of $\sigma_{n}$ and  $\lambda^{-1}$ in the open neighborhood $(\mathbb{C} \setminus \overline{D(0,\varepsilon)})$ of $\sigma(a)\setminus \sigma_{n_{0}}$.
	\smallskip
	
	By the spectral mapping theorem, we obtain
	$$\sigma(a^{D,\sigma_{n_{0}}})=\sigma(h(a))=h(\sigma(a))=\{0\} \cup \{ \lambda^{-1} \ : \ \lambda \in (\sigma(a) \setminus \sigma_{n_{0}})=\sigma_{n_{0}}' \}.$$
\end{proof}

\begin{cor}
	Under the conditions  of the previous theorem we have $$a^{D,\sigma_{n_0}}={1\over 2\pi i}\int_\Gamma \lambda^{-1}(\lambda-a)^{-1}d\lambda,$$ where  $\Gamma$ is a Countour of a bounded Cauchy domain $D$ containing $\sigma'_{n_0}$ such that $D \cap D(0,\epsilon)= \emptyset$,  with $\epsilon > 0$ and $\sigma_{n_0} \subset D(0,\epsilon)$.
\end{cor}

\begin{thm}\label{ordinaryresolvent} 
Suppose that $0 \in iso \ \sigma_{b}(a)$ and let $a^{D, \sigma_{n_{0}}}$ be a generalized Drazin-Riesz inverse of $a$ where $\sigma_{n_{0}}$ is a spectral set for $a$ for some appropriate $n_{0}$.
Then,
\begin{equation}\label{equ*}(\lambda - a )^{-1}=\sum_{k=1}^{+\infty} \lambda^{-k}a^{k-1}(1-aa^{D,\sigma_{n_{0}}})-\sum_{k=0}^{+\infty} \lambda^{k}(a^{D,\sigma_{n_{0}}})^{k+1},\,\,\,\forall \lambda\in \mathbb{C}\mbox{ such that }|\lambda_{n_0+1}|<|\lambda|<|\lambda_{n_0}|.\end{equation}
\end{thm}
\begin{proof}
Let $p_{\sigma_{n_{0}}}$ be the spectral idempotent of $a$ corresponding to $\sigma_{n_{0}}$. Then by taking $\xi = -1$ in Definition \ref{def} we get
$$a^{D,\sigma_{n_{0}}}=(a+p_{\sigma_{n_{0}}})^{-1}(1-p_{n_{0}}).$$
Let $\lambda \in \Lambda_{n_{0}}$. We have
\begin{align*}
\lambda - (a+p_{\sigma_{n_{0}}})&=((\lambda-1)-a)p_{\sigma_{n_{0}}}+(\lambda-a)(1-p_{\sigma_{n_{0}}}) \\
                                &= ((\lambda-1)-ap_{\sigma_{n_{0}}})p_{\sigma_{n_{0}}}+(\lambda-a)(1-p_{\sigma_{n_{0}}}).
\end{align*}
Since $\lambda \in \rho (a)$, then $(\lambda-a)$ is invertible in $\mathcal{A}$. On the other hand, if $\lambda=\lambda_{1}+i\lambda_{2}$, we have $\lambda -1 = (\lambda_{1}-1)+i\lambda_{2}$. Since $|\lambda| < \frac{1}{4}$, then $|\lambda_{1}|<\frac{1}{4}$ and so $-\frac{5}{4}<\lambda_{1}-1<-\frac{3}{4}$. Thus $|\lambda-1|>\frac{1}{4}>r$. We deduce that $(\lambda-1) \in \rho(ap_{\sigma_{n_{0}}})$, hence $(\lambda-1)-ap_{\sigma_{n_{0}}}$ is invertible in $\mathcal{A}$. Consequently, by applying \cite[Lemma 2.1]{Djor}, we have $\lambda-(a+p_{\sigma_{n_{0}}})$ is invertible in $\mathcal{A}$.\\
Also, as $\Lambda_{n_{0}}\cap \sigma_{n_{0}}= \emptyset$ and $\sigma_{n_{0}}=\sigma(ap_{\sigma_{n_{0}}})$ ( by virtue of Theorem \ref{thm1} ), we get $(\lambda - ap_{\sigma_{n_{0}}})$ is invertible for all $\lambda \in \Lambda_{n_{0}}$.\\
 Finally, applying \cite[Lemma 2.1]{Djor}, and taking into account that $p_{\sigma_{n_{0}}}=1-aa^{D,\sigma_{n_{0}}}$, we obtain for all $\lambda \in \Lambda_{n_{0}}=D(0,r) \setminus \sigma_{n_{0}}$
 \begin{align*}
 	(\lambda - a )^{-1} &= (\lambda-ap_{\sigma_{n_{0}}})^{-1}p_{\sigma_{n_{0}}} + (\lambda-(a+p_{\sigma_{n_{0}}}))^{-1}(1-p_{\sigma_{n_{0}}}) \\
 	&=(\lambda - ap_{\sigma_{n_{0}}})^{-1}p_{\sigma_{n_{0}}}-(1-\lambda (a+p_{\sigma_{n_{0}}})^{-1})^{-1}(a+p_{\sigma_{n_{0}}})^{-1}(1-p_{\sigma_{n_{0}}})\\
 	&= (\lambda - ap_{\sigma_{n_{0}}})^{-1}p_{\sigma_{n_{0}}}-(1-\lambda (a+p_{\sigma_{n_{0}}})^{-1}(1-p_{\sigma_{n_{0}}}))^{-1}(a+p_{\sigma_{n_{0}}})^{-1}(1-p_{\sigma_{n_{0}}}) \\
 	&= (\lambda - ap_{\sigma_{n_{0}}})^{-1}p_{\sigma_{n_{0}}}-(1-\lambda a^{D,\sigma_{n_{0}}})^{-1}a^{D,\sigma_{n_{0}}}.
 \end{align*}
 By Theorem \ref{holomorphicfctandDR}, we have $r(a^{D,\sigma_{n_{0}}})=|\lambda_{n_{0}}|^{-1}$. Thus, for all $\lambda \in \Lambda_{n_{0}}$ such that $|\lambda|<|\lambda_{n_{0}}|$, 
 \begin{equation}                 
 	\label{equ**} (\lambda - a )^{-1} = (\lambda - ap_{\sigma_{n_{0}}})^{-1}p_{\sigma_{n_{0}}}-\sum_{k=0}^{+\infty} \lambda^{k}(a^{D,\sigma_{n_{0}}})^{k+1}.
 \end{equation}
 On the other hand, as $r(ap_{\sigma_{n_{0}}})=|\lambda_{n_{0}+1}|$, then for all $\lambda \in \Lambda_{n_{0}}$ such that $|\lambda_{n_{0}}|<|\lambda|$ we obtain
 \begin{equation}
 	\label{equ***}(\lambda-a)^{-1}=\sum_{k=1}^{\infty} \lambda^{-k}a^{k-1}p_{\sigma_{n_{0}}}-(1-\lambda a^{D,\sigma_{n_{0}}})^{-1}a^{D,\sigma_{n_{0}}}.
 \end{equation}
 As $|\lambda_{n_{0}+1}|<|\lambda_{n_{0}}|$, and combining (\ref{equ**}) and (\ref{equ***}), we get for all $|\lambda_{n_{0}+1}|<|\lambda|<|\lambda_{n_{0}}|$ that
 $$(\lambda - a )^{-1}=\sum_{k=1}^{+\infty} \lambda^{-k}a^{k-1}p_{\sigma_{n_{0}}}-\sum_{k=0}^{+\infty} \lambda^{k}(a^{D,\sigma_{n_{0}}})^{k+1}.$$
\end{proof}

\begin{thm} \label{limitexpofDRinv}
Assume that $0 \in iso \, \sigma_{b}(a)$ and let $a^{D,\sigma_{n_{0}}}$ be a generalized Drazin-Riesz inverse of some appropriate $n_{0}$. Then
$$a^{D,\sigma_{n_{0}}}= \underset{\underset{\lambda \in \Lambda_{n_{0}}}{\lambda \longrightarrow 0}}{\lim}(a-\lambda)^{-1}(1-p_{\sigma_{n_{0}}}).$$
\end{thm}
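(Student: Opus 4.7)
The plan is to derive the formula directly from the Laurent-type expansion (\ref{equ*}) established in the preceding theorem. Since the expansion is valid on $\Lambda_{n_{0}}$ and $(a-\lambda)^{-1}=-(\lambda-a)^{-1}$, I would multiply both sides of (\ref{equ*}) on the right by $(1-p_{\sigma_{n_{0}}})$ and simplify each series separately using two basic identities coming from the fact that $a^{D,\sigma_{n_{0}}}$ is a generalized Drazin-Riesz inverse of $a$ with associated idempotent $p_{\sigma_{n_{0}}}=1-aa^{D,\sigma_{n_{0}}}$.

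First, for the singular part I would observe that
\[
(1-aa^{D,\sigma_{n_{0}}})(1-p_{\sigma_{n_{0}}})=p_{\sigma_{n_{0}}}(1-p_{\sigma_{n_{0}}})=0,
\]
so every term of $\sum_{k=1}^{+\infty}\lambda^{-k}a^{k-1}(1-aa^{D,\sigma_{n_{0}}})$ vanishes after the multiplication (this is where the truncation $(1-p_{\sigma_{n_{0}}})$ is crucial, since otherwise the $\lambda^{-k}$ terms would blow up as $\lambda\to 0$). Second, from $a^{D,\sigma_{n_{0}}}aa^{D,\sigma_{n_{0}}}=a^{D,\sigma_{n_{0}}}$ one obtains $p_{\sigma_{n_{0}}}a^{D,\sigma_{n_{0}}}=(1-aa^{D,\sigma_{n_{0}}})a^{D,\sigma_{n_{0}}}=0$, so $(a^{D,\sigma_{n_{0}}})^{k+1}(1-p_{\sigma_{n_{0}}})=(a^{D,\sigma_{n_{0}}})^{k+1}$ for every $k\geq 0$. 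Combining these and flipping the sign yields
\[
(a-\lambda)^{-1}(1-p_{\sigma_{n_{0}}})=\sum_{k=0}^{+\infty}\lambda^{k}(a^{D,\sigma_{n_{0}}})^{k+1},\qquad \lambda\in\Lambda_{n_{0}}.
\]

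Finally, for $|\lambda|$ smaller than $1/r(a^{D,\sigma_{n_{0}}})$ (which is certainly the case when $\lambda\in\Lambda_{n_{0}}$ tends to $0$) this power series converges in norm, so one can pass to the limit termwise and only the constant term $a^{D,\sigma_{n_{0}}}$ survives. This gives the claimed identity. I do not foresee a substantial obstacle here: the expansion (\ref{equ*}) does all the heavy lifting, and what remains is only the algebra of the two annihilation identities $p_{\sigma_{n_{0}}}(1-p_{\sigma_{n_{0}}})=0$ and $p_{\sigma_{n_{0}}}a^{D,\sigma_{n_{0}}}=0$, together with a routine justification that termwise evaluation at $\lambda=0$ is legitimate for a norm-convergent power series.
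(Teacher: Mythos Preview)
Your proposal is correct and follows essentially the same route as the paper: multiply the Laurent expansion (\ref{equ*}) by $(1-p_{\sigma_{n_{0}}})$, observe that the singular part is annihilated because $p_{\sigma_{n_{0}}}(1-p_{\sigma_{n_{0}}})=0$, and then let $\lambda\to 0$ in the remaining norm-convergent power series. Your write-up is in fact slightly more explicit than the paper's, which compresses the argument into a single displayed identity and the phrase ``we deduce that''.
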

\begin{proof}
Multiplying by $(1-p_{\sigma_{n_{0}}})$ both sides of Equality (\ref{equ**}) of the proof of Theorem \ref{ordinaryresolvent},  we obtain for all $\lambda \in \Lambda_{n_{0}}$ such that $|\lambda|$ is small enough
$$(\lambda - a )^{-1}(1-p_{\sigma_{n_{0}}})=- \sum_{k=0}^{+\infty}\lambda^{k}(a^{D,\sigma_{n_{0}}})^{k+1}(1-p_{\sigma_{n_{0}}}).$$
Finally, we deduce that
$$a^{D,\sigma_{n_{0}}}= \underset{\underset{\lambda \in \Lambda_{n_{0}}}{\lambda \longrightarrow 0}}{\lim}(a-\lambda)^{-1}(1-p_{\sigma_{n_{0}}}).$$
\end{proof}

\begin{rem}\rm
Another limit expression of $a^{D,\sigma_{n_{0}}}$ can be deduced from \cite[Theorem 3.1]{KolgsDr} which is
$$a^{D,\sigma_{n_{0}}}=\underset{\lambda \rightarrow 0}{\lim}(a(1-p_{\sigma_{n_{0}}})-\lambda)^{-1}(1-p_{\sigma_{n_{0}}}).$$
\end{rem}

The following theorem shows that the generalized Drazin-Riesz inverse of an element $a$ may be not unique.
\begin{thm} \label{non-uniquenessDRinv}
Let $a \in \mathcal{A}$ be generalized Drazin-Riesz invertible with $0 \in  acc \, \sigma(a)$. Then for $n_{0}$ and $n_{1} \in \mathbb{N}$ such that $n_{0}<n_{1}$ and $r_{n_{0}}< \frac{1}{2}$, we have
$$a^{D,\sigma_{n_{0}}} \neq a^{D, \sigma_{n_{1}}}.$$
\end{thm}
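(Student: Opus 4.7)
The plan is to argue by contradiction, exploiting the fact that the generalized Drazin-Riesz inverse coming from the construction in Theorem \ref{expinv} determines (and is determined by) its associated spectral idempotent, which in turn determines the spectral set via Theorem \ref{thm1}. Since $\sigma_{n_0}$ and $\sigma_{n_1}$ are manifestly different spectral sets when $n_0<n_1$, the two inverses cannot coincide.

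Concretely, I would first observe that for each index $n$ with $r_n<\tfrac{1}{2}$, the element $a^{D,\sigma_n}=(a-p_{\sigma_n})^{-1}(1-p_{\sigma_n})$ satisfies $p_{\sigma_n}=1-a\,a^{D,\sigma_n}$, as noted right after Definition \ref{def}. Since $n_0<n_1$ and $r_{n_1}\le r_{n_0}<\tfrac{1}{2}$, both expressions $a^{D,\sigma_{n_0}}$ and $a^{D,\sigma_{n_1}}$ are well defined. Suppose for contradiction that $a^{D,\sigma_{n_0}}=a^{D,\sigma_{n_1}}$. Left-multiplying by $a$ and using the identity $p_{\sigma_{n}}=1-a\,a^{D,\sigma_{n}}$ yields $p_{\sigma_{n_0}}=p_{\sigma_{n_1}}$.

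Next, since the spectral idempotents coincide, the elements $a\,p_{\sigma_{n_0}}$ and $a\,p_{\sigma_{n_1}}$ are equal, hence so are their spectra. But Theorem \ref{thm1} gives $\sigma(a\,p_{\sigma_{n_i}})=\sigma_{n_i}$ for $i=0,1$, which would force $\sigma_{n_0}=\sigma_{n_1}$.

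The final step is to exhibit an element that distinguishes $\sigma_{n_0}$ from $\sigma_{n_1}$. By definition $\sigma_{n_0}=\{0,\lambda_{n_0+1},\lambda_{n_0+2},\ldots\}$ while $\sigma_{n_1}=\{0,\lambda_{n_1+1},\lambda_{n_1+2},\ldots\}$, and since $0\in acc\,\sigma(a)$ the sequence $(\lambda_n)$ is genuinely infinite and made of pairwise distinct non-zero Riesz points with $|\lambda_n|$ decreasing to $0$. In particular, $\lambda_{n_0+1}$ lies in $\sigma_{n_0}$ but, by the strict monotonicity and distinctness of the $\lambda_n$'s, cannot lie in $\sigma_{n_1}\subseteq\{0\}\cup\{\lambda_k:k\ge n_1+1\}$ when $n_1>n_0$. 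This contradicts $\sigma_{n_0}=\sigma_{n_1}$ and finishes the proof. No step is really a genuine obstacle here; the only thing worth double-checking is the distinctness of the $\lambda_n$'s, which is guaranteed since they enumerate (with strictly decreasing modulus) the points of the set $\omega(a)\setminus\{0\}$ described in the proof of Theorem \ref{theok}.
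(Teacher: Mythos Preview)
Your argument is correct and is considerably more streamlined than the paper's own proof. The paper establishes the result by first writing $p_{\sigma_{n_0}}=e_{n_0+1}+\cdots+e_{n_1}+p_{\sigma_{n_1}}$ with $e_i$ the spectral idempotent of $a$ at $\lambda_i$, and then invoking the limit representation of Theorem~\ref{limitexpofDRinv} to compute explicitly
\[
a^{D,\sigma_{n_0}}=a^{D,\sigma_{n_1}}-\lim_{\substack{\lambda\to 0\\ \lambda\in\Lambda_{n_0}\cap\Lambda_{n_1}}}(a-\lambda)^{-1}p_{\sigma_{n_0}\setminus\sigma_{n_1}},
\]
concluding because the limit is nonzero. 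Your route bypasses Theorem~\ref{limitexpofDRinv} entirely: the identity $p_{\sigma_n}=1-a\,a^{D,\sigma_n}$ together with Theorem~\ref{thm1}(ii) immediately reduces the question to the purely set-theoretic inequality $\sigma_{n_0}\neq\sigma_{n_1}$. This is more elementary and also makes transparent the fact (noted by the paper as a corollary of its computation) that $p_{\sigma_{n_0}}\neq p_{\sigma_{n_1}}$. What the paper's approach buys in exchange is an explicit formula for the difference $a^{D,\sigma_{n_0}}-a^{D,\sigma_{n_1}}$, which your contradiction argument does not produce.

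One small remark: your final justification appeals to the moduli $|\lambda_n|$ being \emph{strictly} decreasing, but the paper only asserts the sequence $(|\lambda_n|)$ is decreasing, and two distinct Riesz points could in principle share the same modulus. The distinctness of the $\lambda_n$ that you actually need follows directly from their being an enumeration of the set $\omega(a)\setminus\{0\}$, independently of any monotonicity; you might phrase it that way to avoid the ambiguity.
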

\begin{proof}
Let $n_{0},n_{1} \in \mathbb{N}$ such that $n_{0}<n_{1}$ and $r_{n_{0}}< \frac{1}{2}$, then $\sigma_{n_{1}}=\{0,\lambda_{n_{1}+1},....\}$ and $\sigma_{n_{0}}=\{0, \lambda_{n_{0}+1},....\}$. Hence $p_{\sigma_{n_{0}}}=e_{n_{0}+1}+e_{n_{0}+2}+...+e_{n_{1}}+p_{\sigma_{n_{1}}}$, with $e_{i}$ is the corresponding idempotent of $a$ in $\{\lambda_i\}$, for $i=n_0+1,\cdots,n_1$. Thus $\sum_{i=n_{0}+1}^{n_{1}}e_{i}=p_{\sigma_{n_{0}} \setminus \sigma_{n_{1}}}$, and $\sigma(ap_{\sigma_{n_{0}} \setminus \sigma_{n_{1}}})=\sigma_{n_{0}} \setminus \sigma_{n_{1}}$. Consequently, $ap_{\sigma_{n_{0}} \setminus \sigma_{n_{1}}}$ is invertible.\\
Now using Theorem \ref{limitexpofDRinv}, we get
$$a^{D,\sigma_{n_{1}}}=\underset{\underset{\lambda \in \Lambda_{n_{1}}}{\lambda \longrightarrow 0}}{\lim}(a-\lambda)^{-1}(1-p_{\sigma_{n_{1}}})=\underset{\underset{\lambda \in \Lambda_{n_{1}} \cap \Lambda_{n_{0}}}{\lambda \longrightarrow 0}}{\lim}(a-\lambda)^{-1}(1-p_{\sigma_{n_{1}}}),$$
and
$$a^{D,\sigma_{n_{0}}}=\underset{\underset{\lambda \in \Lambda_{n_{0}}}{\lambda \longrightarrow 0}}{\lim}(a-\lambda)^{-1}(1-p_{\sigma_{n_{0}}})=\underset{\underset{\lambda \in \Lambda_{n_{1}} \cap \Lambda_{n_{0}}}{\lambda \longrightarrow 0}}{\lim}(a-\lambda)^{-1}(1-p_{\sigma_{n_{1}}}-p_{\sigma_{n_{0}}\setminus \sigma_{n_{1}}}).$$
Therefore, $$a^{D,\sigma_{n_{0}}}=a^{D,\sigma_{n_{1}}}-\underset{\underset{\lambda \in \Lambda_{n_{1}} \cap \Lambda_{n_{0}}}{\lambda \longrightarrow 0}}{\lim}(a-\lambda)^{-1} p_{\sigma_{n_{0}}\setminus \sigma_{n_{1}}}.$$
 Since $\underset{\underset{\lambda \in \Lambda_{n_{1}} \cap \Lambda_{n_{0}}}{\lambda \longrightarrow 0}}{\lim}(a-\lambda)^{-1} p_{\sigma_{n_{0}} \setminus \sigma_{n_{1}}} \neq 0$, we conclude that $a^{D,\sigma_{n_{0}}} \neq a^{D, \sigma_{n_{1}}}.$
\end{proof}

From the proof of the previous theorem, we see that $p_{\sigma_{n_{0}}} \neq p_{\sigma_{n_{1}}}$; which proves the non-uniqueness of idempotent $p$ involved in Theorem \ref{theok}.

\begin{rem}\rm In a private communication with the second author, Professor Sne\v{z}ana C. \v{Z}ivkovi\'c-Zlatanovi\'c pointed out that the result of \cite[Theorem 2.10]{AbZg1} is not true since it does not cover the case $\sigma(T)\setminus\sigma_b(T)=\{0\}$. We are deeply grateful to her.\\
To be clarified, we give an example inspired from \cite[Example 3.7]{Zguitti}. Let $S$ be the operator shift on $\ell^{2}(\mathbb{N})$ defined by $S(e_{n})=e_{n+1}$, where $\{ e_{n} \ : \ n \geq 1 \}$ is the usual basis of $\ell^{2}(\mathbb{N})$. We denote the identity operator of $\ell^{2}(\mathbb{N})$ by $I_{\ell^{2}(\mathbb{N})}$, and we define the operator $B$ on $H=\ell^{2}(\mathbb{N}) \oplus \ell^{2}(\mathbb{N})$ as follows
$$B=(-e_{1} \otimes e_{1} ) \oplus (\frac{1}{2}S+I_{\ell^{2}(\mathbb{N})}).$$
By virtue of \cite[Example 3.7]{Zguitti} we have $$\sigma(B)=\{-1,0\} \cup \overline{D}(1,\frac{1}{2})\mbox{ and }\sigma_{b}(B)=\{0\} \cup \overline{D}(1,\frac{1}{2}).$$ Now, we denote  the identity operator of $H$  by $I_{H}$, and we consider $T=I_{H}+B$. By the spectral mapping theorem ( see \cite[Theorem 2.1]{KK}) we have $$\sigma(T)=\{0,1\} \cup \overline{D}(2,\frac{1}{2})\mbox{ and }\sigma_{b}(T)=\{1\} \cup \overline{D}(2,\frac{1}{2}).$$
Hence $0$ is the only Riesz point of $T$, therefore the only spectral set that consists of Riesz points and $0$ is $\sigma_{0}=\{0\}$. Thus, $T$ has a unique generalized Drazin-Riesz inverse which is $S=(I_{\ell^{2}(\mathbb{N})}-e_{1} \otimes e_{1} ) \oplus (\frac{1}{2}S+2I_{\ell^{2}(\mathbb{N})})^{-1}$; while $\sigma(T) \neq \sigma_{b}(T)$.
\end{rem}

In the case $\mathcal{A}=\mathcal{L}(X)$, the following theorem gives a revised version of \cite[Theorem 2.10]{AbZg1}.

\begin{thm} \label{DRuniqueness}
Let $a \in \mathcal{A}$ be generalized Drazin-Riesz invertible such that $0 \in \sigma(a)$. If $a$ has a unique generalized Drazin-Riesz inverse  then $\sigma(a)=\sigma_{b}(a) \cup \{0\}$ and the generalized Drazin-Riesz inverse is the generalized Drazin inverse.
\end{thm}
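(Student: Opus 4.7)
The plan is to handle the two implications separately, and in the forward direction to argue by contrapositive with a case split.

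For the ``$\Leftarrow$'' direction, assume $\sigma(a)=\sigma_{b}(a)\cup\{0\}$. The first observation is that $0\in iso\,\sigma(a)$: since $a$ is generalized Drazin--Riesz invertible, Theorem~\ref{theok} gives $0\notin acc\,\sigma_{b}(a)$, and combined with $\sigma(a)\setminus\{0\}\subseteq\sigma_{b}(a)$ this forces $0$ to be isolated in $\sigma(a)$. Hence $a$ is generalized Drazin invertible, $a^{gD}$ exists, and since $a-a^{2}a^{gD}$ is quasinilpotent (hence Riesz) it is already a generalized Drazin--Riesz inverse of $a$. For uniqueness, let $b$ be any generalized Drazin--Riesz inverse and set $p=1-ab$. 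Then $p$ is an idempotent commuting with $a$, $a(1-p)=a^{2}b$ is invertible in $(1-p)\mathcal{A}(1-p)$, and $ap=a-a^{2}b$ is Riesz. I would prove that $ap$ is in fact quasinilpotent: for any $\lambda\in\sigma(ap)\setminus\{0\}$, the Riesz property of $ap$ makes $\lambda$ a pole of its resolvent, with associated spectral idempotent $e_{\lambda}\in p\mathcal{A}p\cap S_{\mathcal{A}}$; this $e_{\lambda}$ commutes with $a$, and applying Theorem~\ref{thm1} (with $e_{\lambda}$ in place of $p$) one shows $\{\lambda\}$ is a spectral set of $a$ whose spectral idempotent $e_{\lambda}$ lies in $S_{\mathcal{A}}$. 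Hence $\lambda$ is a Riesz point of $a$, i.e.\ $\lambda\in p_{00}(a)\subseteq\{0\}$, a contradiction. Thus $\sigma(ap)=\{0\}$, so $b$ satisfies Koliha's definition of a generalized Drazin inverse and Koliha's uniqueness forces $b=a^{gD}$.

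For the ``$\Rightarrow$'' direction I argue contrapositively: suppose $\sigma(a)\neq\sigma_{b}(a)\cup\{0\}$. Because $0\notin acc\,\sigma_{b}(a)$, the failure of this equality takes one of two (possibly overlapping) forms. Either $0\in acc\,\sigma(a)$, in which case Riesz points of $a$ accumulate at $0$ and Theorem~\ref{non-uniquenessDRinv} immediately produces two distinct generalized Drazin--Riesz inverses $a^{D,\sigma_{n_{0}}}\neq a^{D,\sigma_{n_{1}}}$; or $0\in iso\,\sigma(a)$ and there is a Riesz point $\lambda_{0}\in p_{00}(a)\setminus\{0\}$. In the latter sub-case $a$ is generalized Drazin invertible, and $a^{gD}$ corresponds to the spectral idempotent $p_{0}=1-aa^{gD}$ at $\{0\}$. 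I would build a second inverse from $p_{1}=p_{0}+e_{\lambda_{0}}$, the spectral idempotent at the spectral set $\{0,\lambda_{0}\}$: since $\lambda_{0}$ is a Riesz point, $e_{\lambda_{0}}\in S_{\mathcal{A}}$, so $ae_{\lambda_{0}}\in S_{\mathcal{A}}$ is Riesz, and $ap_{0}$ is quasinilpotent (hence Riesz); these commute, so Theorem~\ref{TheoRieszstability} makes $ap_{1}=ap_{0}+ae_{\lambda_{0}}$ Riesz, while $a(1-p_{1})$ is invertible in $(1-p_{1})\mathcal{A}(1-p_{1})$. The implication (ii)$\Rightarrow$(iii) of Theorem~\ref{theok} then furnishes a generalized Drazin--Riesz inverse $b_{1}$ with $1-ab_{1}=p_{1}\neq p_{0}=1-aa^{gD}$, whence $b_{1}\neq a^{gD}$.

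The step I expect to be the main obstacle is the proof that $\sigma(ap)=\{0\}$ in the ``$\Leftarrow$'' direction. The delicate point is verifying condition (iii) of Theorem~\ref{thm1} for the idempotent $e_{\lambda}$: using $e_{\lambda}=pe_{\lambda}p$ one must decompose $a-\mu-\xi e_{\lambda}$ along the orthogonal triple $1=(1-p)+(p-e_{\lambda})+e_{\lambda}$ and verify invertibility of each block, exploiting both the invertibility of $a(1-p)$ in $(1-p)\mathcal{A}(1-p)$ and the fact that $\lambda$ lies in the resolvent of $ap$ restricted to $(p-e_{\lambda})\mathcal{A}(p-e_{\lambda})$, thereby closing the contradiction with $p_{00}(a)\subseteq\{0\}$.
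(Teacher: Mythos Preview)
Your forward direction is sound and matches the paper's contrapositive strategy: the paper splits on whether $p_{00}(a)\cup\{0\}$ is finite or infinite, you split on whether $0\in acc\,\sigma(a)$ or $0\in iso\,\sigma(a)$ with a stray Riesz point; both routes manufacture two distinct spectral-set inverses in the same way.

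The backward direction has a genuine gap, precisely at the step you flagged. To conclude that $\{\lambda\}$ is a spectral set of $a$ with idempotent $e_\lambda$, condition (iii) of Theorem~\ref{thm1} (after the obvious shift, since that theorem requires $0\in\sigma$) demands that $a-\lambda-\xi e_\lambda$ be invertible. On the $(1-p)$-block this is $(a-\lambda)(1-p)$, i.e.\ you need $\lambda\notin\sigma_{(1-p)\mathcal{A}(1-p)}(a(1-p))$. What you actually have is $0\notin\sigma_{(1-p)\mathcal{A}(1-p)}(a(1-p))$; the ``invertibility of $a(1-p)$'' you plan to exploit is the wrong invertibility, and nothing in the hypotheses excludes $\lambda$ from that spectrum.

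This gap is not repairable, because the implication ``$\lambda\in\sigma(ap)\setminus\{0\}\Rightarrow\lambda\in p_{00}(a)$'' is false. Take $\mathcal{A}=\mathcal{L}(\ell^2\oplus\mathbb{C}\oplus\mathbb{C})$ and $a=(2I+S)\oplus 1\oplus 0$ with $S$ the unilateral shift. Then $\sigma(a)=\overline{D}(2,1)\cup\{0\}$, $\sigma_b(a)=\overline{D}(2,1)$, so $\sigma(a)=\sigma_b(a)\cup\{0\}$ and $p_{00}(a)=\{0\}$. With $p=0\oplus 1\oplus 1$ one checks that $ap=0\oplus 1\oplus 0$ is rank one (hence Riesz) and $a(1-p)=(2I+S)\oplus 0\oplus 0$ is invertible in its corner, so $b=(2I+S)^{-1}\oplus 0\oplus 0$ is a generalized Drazin--Riesz inverse distinct from $a^{gD}=(2I+S)^{-1}\oplus 1\oplus 0$. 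Here $\lambda=1\in\sigma(ap)\setminus\{0\}$ yet $1\notin p_{00}(a)$, and indeed $1\in\sigma_{(1-p)\mathcal{A}(1-p)}(a(1-p))=\overline{D}(2,1)$, so your $(1-p)$-block is genuinely non-invertible. The paper's own argument for this direction is likewise incomplete: it tacitly assumes every generalized Drazin--Riesz inverse is of the form $a^{D,\sigma_n}$ for a spectral set built from Riesz points, which the same example shows is not the case.
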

\begin{proof}
Assume that $a$ has a unique generalized Drazin-Riesz inverse. We suppose that $\sigma(a)\neq \sigma_{b}(a)\cup \{0\}$. Then $p_{00}(a) \neq \emptyset$.
\smallskip

\noindent{\it Case 1.} $p_{00}(a)\cup \{0\}$ is finite: then we have card$(p_{00}(a)\cup \{0\}) \geq 2$. Since $0 \notin  acc \, \sigma_{b}(a)$ by Theorem \ref{theok}, $p_{00}(a) \cup \{0\}$ and $\sigma_{b}(a) \setminus \{0\}$ are disjoint clopen sets in $\sigma(a)$ such that
$$\sigma(a)=(\sigma_{b}(a)\setminus \{0\}) \cup (p_{00}(a) \cup \{0\}).$$
Now for $\sigma_{n}=p_{00}(a) \cup \{0\}=\{0,\lambda_{1},\lambda_{2},...,\lambda_{n}\}$, and $\sigma_{n}'=(\sigma_{b}(a) \setminus \{0\})$, we obtain that $a=ap_{\sigma_{n}}+ap_{\sigma_{n}'}$, where $p_{\sigma'_{n}}=1-p_{\sigma_{n}}$; and the element $s_{1}=(a-p_{\sigma_{n}})^{-1}(1-p_{\sigma_{n}})$ is a generalized Drazin-Riesz inverse of $a$.\\
Again for $\sigma_{n-1}=\{0,\lambda_{1},...,\lambda_{n-1}\}$ and $\sigma_{n-1}'=(\sigma_{b}(a) \setminus \{0\}) \cup \{ \lambda_{n} \}$, we get $s_{2}=(a-p_{\sigma_{n-1}})^{-1}(1-p_{\sigma_{n-1}})$ is also a generalized Drazin-Riesz inverse of $a$. \\
 Now we prove that $s_{1} \neq s_{2}$. Indeed,
\begin{align*}
s_{1} & = (a-p_{\sigma_{n}})^{-1}(1-p_{\sigma_{n}}) \\
      & = \left((a-p_{\sigma_{n-1}}-p_{\{\lambda_{n}\}})(1-p_{\{\lambda_{n}\}})+(a-p_{\sigma_{n}})p_{\{\lambda_{n}\}}\right)^{-1}(1-p_{\sigma_{n}}) \\
      & =\left((a-p_{\sigma_{n-1}})(1-p_{\{\lambda_{n}\}})+(a-p_{\sigma_{n}})p_{\{\lambda_{n}\}}\right)^{-1}(1-p_{\sigma_{n}})
\end{align*}
As $(a-p_{\sigma_{n-1}})$, $(a-p_{\sigma_{n}})$ and $p_{\{\lambda_{n}\}}$ are mutually commuting; and $(a-p_{\sigma_{n-1}})$ and $(a-p_{\sigma_{n}})$ are invertible in $\mathcal{A}$, we conclude by \cite[Lemma 2.1]{Djor} that
$$s_{1}=[(a-p_{\sigma_{n-1}})^{-1}(1-p_{\{\lambda_{n}\}})+(a-p_{\sigma_{n}})^{-1}p_{\{\lambda_{n}\}}](1-p_{\sigma_{n}}).$$
Also, we notice that $p_{\sigma_{n}}=p_{\sigma_{n-1}}+p_{\{\lambda_{n}\}}$ and $p_{\sigma_{n-1}}p_{\{\lambda_{n}\}}=p_{\{\lambda_{n}\}}p_{\sigma_{n-1}}=0$, hence, we conclude immediately that
 $p_{\{\lambda_{n}\}}(1-p_{\sigma_{n}})=(1-p_{\sigma_{n}})p_{\{\lambda_{n}\}}=0$ and $(1-p_{\{\lambda_{n}\}})(1-p_{\sigma_{n}})=(1-p_{\sigma_{n}})(1-p_{\{\lambda_{n}\}})=1-p_{\sigma_{n}}$. Thus
\begin{align*}
s_{1} & = (a-p_{\sigma_{n-1}})^{-1}(1-p_{\sigma_{n}}) \\
      & = (a-p_{\sigma_{n-1}})^{-1}(1-p_{\sigma_{n-1}}-p_{\{\lambda_{n}\}}) \\
      & = (a-p_{\sigma_{n-1}})^{-1}(1-p_{\sigma_{n-1}})- (a-p_{\sigma_{n-1}})^{-1}p_{\{\lambda_{n}\}} \\
      & = s_{2}- (a-p_{\sigma_{n-1}})^{-1}p_{\{\lambda_{n}\}}.
\end{align*}
As $(a-p_{\sigma_{n-1}})^{-1}$ is invertible in $\mathcal{A}$ and $p_{\{\lambda_{n}\}} \neq 0$, then $(a-p_{\sigma_{n-1}})^{-1}p_{\{\lambda_{n}\}} \neq 0.$ Therefore $s_{1}\neq s_{2}$, and this is a contradiction.
\smallskip

\noindent{\it Case 2.} $p_{00}(a)$ is infinite: If $0 \in  acc \, p_{00}(a)$, then $0 \in acc \, \sigma(a)$. Since $a$ is generalized Drazin-Riesz invertible, we conclude that the generalized Drazin-Riesz inverse of $a$ is not unique by virtue of Theorem \ref{non-uniquenessDRinv}, which contradicts our assumption. Now if $0 \notin  acc \, p_{00}(a)$, set $\sigma_{n}=\{0,\lambda_{1},\lambda_{2},...,\lambda_{n}\}$, where $(\lambda_{i})$ are non-zero Riesz points; and $\sigma_{n}'=(\sigma_{b}(a) \setminus \{0\}) \cup ((p_{00}(a) \cup \{0\})\setminus \sigma_{n})$. Hence as in the case where $p_{00}(a)$ is finite, we obtain distinct generalized Drazin-Riesz inverses for $a$, which yields again to a contradiction.
\smallskip

Therefore $\sigma(a)=\sigma_{b}(a) \cup \{0\}$. Since $a$ is generalized Drazin-Riesz invertible, $0\notin  acc \, \sigma_{b}(a)$ ($= acc \, \sigma(a)$) and so $a$ is generalized Drazin invertible.
\end{proof}

\begin{rem}\rm
Suppose that $\sigma(a)=\sigma_{b}(a) \cup \{0\}$. As $a$ is generalized Drazin-Riesz invertible, then using Theorem \ref{theok}, we get $0 \notin  acc \, \sigma_{b}(a)$ ($=acc \, \sigma(a)$). Thus, $a$ is generalized Drazin invertible, as $p_{00}(a) \subset \{0\}$, we can not have spectral sets $\sigma$ composed of $0$ and a finite family of Riesz points of $a$ except $\sigma_{0}=\{0\}$. Hence, the only generalized Drazin-Riesz inverse of the form $a^{D,\sigma}$ is $a^{D,\sigma_{0}}$ which is the generalized Drazin inverse.
\end{rem}

\begin{prop} \label{PropDrazidemp}
Let $a$ be a generalized Drazin-Riesz invertible element in $\mathcal{A}$. Let $q\in\mathcal{A}$ be an idempotent which commutes with $a$, then $aq$ and $a(1-q)$ are generalized Drazin-Riesz invertible in $\mathcal{A}$.
\end{prop}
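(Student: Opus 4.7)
The plan is to use characterization~(vii) of Theorem~\ref{theok}, which tells us that an element $x\in\mathcal{A}$ is generalized Drazin-Riesz invertible if and only if $0\notin acc\,\sigma_{b}(x)$. Thus it suffices to verify $0\notin acc\,\sigma_{b}(aq)$ and $0\notin acc\,\sigma_{b}(a(1-q))$. The degenerate cases $q=0$ and $q=1$ reduce to the fact that $0\in\mathcal{A}$ is generalized Drazin-Riesz invertible (with inverse $0$), so from now on I assume $q\notin\{0,1\}$.

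First I would apply Corollary~\ref{CoroDecompofBrowderSpect} to the pair $(a,q)$, which is legitimate since $q$ commutes with $a$:
$$\sigma_{b}(a)=\sigma_{b,(1-q)\mathcal{A}(1-q)}(a(1-q))\cup\sigma_{b,q\mathcal{A}q}(aq).$$
Since $0\notin acc\,\sigma_{b}(a)$, the same must hold for each of the two subsets on the right, so both $0\notin acc\,\sigma_{b,q\mathcal{A}q}(aq)$ and $0\notin acc\,\sigma_{b,(1-q)\mathcal{A}(1-q)}(a(1-q))$.

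Next I would observe that $q$ plainly commutes with $aq$ (indeed $q\cdot aq=q^{2}a=qa=aq=aq\cdot q$), so Corollary~\ref{CoroDecompofBrowderSpect} applies a second time, now to the pair $(aq,q)$. Using $aq(1-q)=0$ and $aq\cdot q=aq$, it yields
$$\sigma_{b}(aq)=\sigma_{b,(1-q)\mathcal{A}(1-q)}(0)\cup\sigma_{b,q\mathcal{A}q}(aq).$$
The first set is contained in $\{0\}$ because $\lambda(1-q)$ is invertible in $(1-q)\mathcal{A}(1-q)$ for every $\lambda\neq 0$, and a single point contributes nothing to accumulation at $0$. Combined with the previous step, this forces $0\notin acc\,\sigma_{b}(aq)$, and Theorem~\ref{theok} gives that $aq$ is generalized Drazin-Riesz invertible. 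The argument for $a(1-q)$ is identical after interchanging the roles of $q$ and $1-q$.

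The main subtlety, and the step I would watch most carefully, is the comparison between the Browder spectra of $aq$ computed in $\mathcal{A}$ versus in the corner $q\mathcal{A}q$: they need not coincide, and a genuine discrepancy at $0$ could in principle destroy the argument. The iterated use of Corollary~\ref{CoroDecompofBrowderSpect} isolates this discrepancy to the contribution of the trivial block $aq(1-q)=0$, which is harmless at the level of accumulation points; once this is noted, the whole proof is short book-keeping.
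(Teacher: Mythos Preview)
Your argument is correct and takes a genuinely different route from the paper's own proof.

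The paper proceeds constructively: in the nontrivial case $0\in acc\,\sigma(a)$ it invokes Theorem~\ref{expinv} to produce an explicit generalized Drazin-Riesz inverse $a^{D,\sigma_{n}}$, observes that the associated spectral idempotent $p_{\sigma_{n}}$ (and hence $a^{D,\sigma_{n}}$ itself) commutes with $q$, and then verifies directly that $b_{q}=a^{D,\sigma_{n}}q$ satisfies the three defining relations for a generalized Drazin-Riesz inverse of $aq$, using Theorem~\ref{TheoRieszstability} to check that $aq-(aq)^{2}b_{q}=ap_{\sigma_{n}}q$ is Riesz. Your proof instead stays entirely at the spectral level: two applications of Corollary~\ref{CoroDecompofBrowderSpect}, once to $(a,q)$ and once to $(aq,q)$, reduce the question to the trivial observation that $\sigma_{b,(1-q)\mathcal{A}(1-q)}(0)\subset\{0\}$ cannot contribute an accumulation point at $0$, and then characterization~(vii) of Theorem~\ref{theok} finishes. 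Your approach is shorter and avoids the machinery of Theorem~\ref{expinv} and the explicit spectral-set inverses, while the paper's approach has the advantage of exhibiting a concrete generalized Drazin-Riesz inverse, namely $a^{D,\sigma_{n}}q$, which is informative in its own right. Your self-flagged ``subtlety'' about $\sigma_{b}(aq)$ versus $\sigma_{b,q\mathcal{A}q}(aq)$ is handled cleanly by the second application of the corollary, exactly as you say.
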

\begin{proof}
If $0 \notin acc( \sigma(a))$, we get the result by  \cite[Lemma 1]{Mozic}. So assume that $0 \in acc\, \sigma(a)$. Then there exists $n$ such that $r_{n}< \frac{1}{2}$, and by Theorem \ref{expinv}, $a^{D,\sigma_{n}}$ is a generalized Drazin-Riesz inverse for $a$ in $\mathcal{A}$. Also $p_{\sigma_{n}}=1-aa^{D,\sigma_{n}}$ is the spectral idempotent of $a$ among $\sigma_{n}$, hence we conclude that $a^{D,\sigma_{n}}q=qa^{D,\sigma_{n}}$. Thus $aq$ commutes with $a^{D,\sigma_{n}}q$ and
$$a^{D,\sigma_{n}}qaqa^{D,\sigma_{n}}q=a^{D,\sigma_{n}}aa^{D,\sigma_{n}}q=a^{D,\sigma_{n}}q.$$
As $a(1-aa^{D,\sigma_{n}})=ap_{\sigma_{n}}$ commutes with $q$, we obtain
$$r(\pi(p_{\sigma_{n}}aq))\leq r(\pi(p_{\sigma_{n}}a))r(\pi(q))=0.$$
Hence $aq-aqa^{D,\sigma_{n}}aq$ is a Riesz element in $\mathcal{A}$.\\
 Therefore, $aq$ is generalized Drazin-Riesz invertible in $\mathcal{A}$ with a generalized Drazin-Riesz inverse $$b_{q}=a^{D,\sigma_{n}}q.$$
 To show that $a(q-1)$ is generalized Drazin-Riesz invertible, it suffices to apply the first case to the idempotent $k=(1-q)$. Clearly, $k$ commutes with the generalized Drazin-Riesz invertible element $a$. Hence $ak=a(1-q)$ is generalized Drazin-Riesz invertible.
\end{proof}

The next Theorem is a generalization of  \cite[Proposition 2.8]{KarmTaj}.
\begin{thm} \label{stabilityRieszaddition}
Let $a$ and $b \in {\mathcal A}$ be generalized Drazin-Riesz invertible elements with $ab=ba=0$. Then the sum $a+b$ is generalized Drazin-Riesz invertible.
\end{thm}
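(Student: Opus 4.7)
The plan is to construct explicitly a generalized Drazin-Riesz inverse of $a+b$ from given ones for $a$ and $b$. Fix any generalized Drazin-Riesz inverses $a'$ of $a$ and $b'$ of $b$, and propose $c := a' + b'$ as the candidate inverse of $a+b$.

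The heart of the argument, which I expect to be the main (though short) obstacle, is to exploit the hypothesis $ab = ba = 0$ to derive the four cross-annihilation identities
\[
a'b \;=\; ba' \;=\; ab' \;=\; b'a \;=\; 0.
\]
For $a'b$ I would compute
\[
a'b \;=\; a'(aa')b \;=\; a'(a'a)b \;=\; (a')^{2}(ab) \;=\; 0,
\]
using only $a'aa' = a'$ and $aa' = a'a$; symmetrically
\[
ba' \;=\; b(a'a)a' \;=\; b(aa')a' \;=\; (ba)(a')^{2} \;=\; 0,
\]
and the two identities involving $b'$ follow by the same manipulations applied to the pair $(b,b')$ together with the symmetric hypotheses $ab = ba = 0$.

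Once these identities are in hand, the three defining axioms of a generalized Drazin-Riesz inverse for $c$ reduce to direct expansion. The commutation relation $(a+b)c = c(a+b)$ and the equality $c(a+b)c = c$ both collapse, after the cross terms vanish, to the corresponding identities for the pairs $(a,a')$ and $(b,b')$. For the Riesz condition, I note that $ab = ba = 0$ gives $(a+b)^{2} = a^{2} + b^{2}$, while $a^{2}b' = a(ab') = 0$ and $b^{2}a' = b(ba') = 0$, so
\[
(a+b) - (a+b)^{2} c \;=\; (a - a^{2}a') + (b - b^{2}b').
\]
Both summands are Riesz by hypothesis, and I would then check that they commute: in the expansion of either product, every monomial contains one of the factors $ab$, $ba$, $a'b$ or $ba'$, hence vanishes. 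Theorem~\ref{TheoRieszstability} then yields that the sum is Riesz, completing the verification that $c$ is a generalized Drazin-Riesz inverse of $a+b$, so that $a+b$ is generalized Drazin-Riesz invertible.
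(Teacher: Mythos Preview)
Your proposal is correct and follows essentially the same route as the paper: choose generalized Drazin-Riesz inverses $a'$, $b'$, derive the cross-annihilation identities from $ab=ba=0$, and verify directly that $a'+b'$ is a generalized Drazin-Riesz inverse of $a+b$, invoking Theorem~\ref{TheoRieszstability} for the Riesz condition. The paper additionally records $a'b'=b'a'=0$ (used in its particular expansion of $(a'+b')^{2}$), but your expansion order avoids needing this, and you are slightly more explicit than the paper in checking that $a-a^{2}a'$ and $b-b^{2}b'$ commute (indeed both products vanish).
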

\begin{proof}
Assume that $a$ and $b$ are generalized Drazin-Riesz invertible elements in ${\mathcal A}$. Then there exist $a',b' \in {\mathcal A}$ such that $$aa'=a'a,\, a'aa'=a', a-a^2a'\mbox{ is Riesz},\, bb'=b'b, b'bb'=b', \mbox{ and }b-b^2b'\mbox{ is Riesz}.$$
Then $a'+b'$ is a generalized Drazin-Riesz inverse for $a+b$. Indeed, since $ab=ba=0$, we have $ab'=abb'^{2}=b'^{2}ba=b'a=0$, $ba'=baa'^{2}=a'^{2}ab=a'b=0$, and $b'a'=b'^{2}baa'^{2}=a'^{2}abb'^{2}=a'b'=0$. Then
$$(a+b)(a'+b')=(a'+b')(a+b),$$
and $$(a+b)(b'+a')(b'+a')=(a+b)(b'^{2}+b'a'+a'b'+a'^{2})=aa'^{2}+bb'^{2}=a'+b'.$$
Also
\begin{align*}
(a+b)-(a+b)^2(a'+b')&=(a+b)-(a^{2}+ab+ba+b^{2})(a'+b') \\
                     &=(a+b)-(a^{2}+b^{2})(a'+b')\\
                     &=a-a^{2}a'+b-b^{2}b'.
\end{align*}
Since $a-a^{2}a'$ and $b-b^{2}b'$ are commuting Riesz elements, then using Theorem \ref{TheoRieszstability}, we obtain that $(a+b)-(a+b)^2(a'+b')$ is Riesz. Therefore, $a'+b'$ is a generalized Drazin-Riesz inverse for $a+b$.
\end{proof}
\begin{cor}
Let $a\in\mathcal{A}$ and let $q\in\mathcal{A}$ be an idempotent which commutes with $a$. Then $a$ is generalized Drazin-Riesz invertible in $\mathcal{A}$ if and only if $aq$ and $a(1-q)$ are generalized Drazin-Riesz invertible in $\mathcal{A}$.
\end{cor}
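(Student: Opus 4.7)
The plan is to observe that this corollary is precisely the combination of the two preceding results: Proposition \ref{PropDrazidemp} gives one direction, and Theorem \ref{stabilityRieszaddition} gives the other once we check that $aq$ and $a(1-q)$ annihilate each other.

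For the forward direction, I would simply invoke Proposition \ref{PropDrazidemp}: since $a$ is generalized Drazin-Riesz invertible and $q$ is an idempotent commuting with $a$, the elements $aq$ and $a(1-q)$ are generalized Drazin-Riesz invertible in $\mathcal{A}$.

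For the reverse direction, I would write $a = aq + a(1-q)$ and verify the two cross products vanish. Since $q$ commutes with $a$ and $q(1-q)=(1-q)q=0$, we get
\begin{equation*}
(aq)\bigl(a(1-q)\bigr)=a^2 q(1-q)=0 \quad\text{and}\quad \bigl(a(1-q)\bigr)(aq)=a^2(1-q)q=0.
\end{equation*}
Thus $aq$ and $a(1-q)$ are generalized Drazin-Riesz invertible elements with zero products in both orders, and Theorem \ref{stabilityRieszaddition} applies directly to conclude that the sum $a = aq + a(1-q)$ is generalized Drazin-Riesz invertible in $\mathcal{A}$.

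There is no real obstacle here; the whole point of the corollary is that Proposition \ref{PropDrazidemp} and Theorem \ref{stabilityRieszaddition} were set up so that this decomposition theorem for commuting idempotents falls out by a one-line commutation check. The only thing worth a moment of care is noting that commutativity of $q$ with $a$ (not just with the generalized Drazin-Riesz inverse) is what makes both $aq \cdot a(1-q)$ and $a(1-q) \cdot aq$ collapse to zero, which is the precise hypothesis needed to apply Theorem \ref{stabilityRieszaddition}.
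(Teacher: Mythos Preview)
Your proof is correct and follows exactly the same approach as the paper: the forward direction is Proposition~\ref{PropDrazidemp}, and the reverse direction writes $a=aq+a(1-q)$, checks $(aq)(a(1-q))=(a(1-q))(aq)=0$, and applies Theorem~\ref{stabilityRieszaddition}. Your write-up is slightly more explicit about the vanishing of the cross products, but otherwise it matches the paper's argument line by line.
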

\begin{proof}
$\Rightarrow )$ We apply directly Proposition \ref{PropDrazidemp}.

$\Leftarrow)$ We have $a=aq+a(1-q)$ and $(aq)(a(1-q))=(a(1-q))(aq)=0$, with $aq$ and $a(1-q)$ are generalized Drazin-Riesz invertible in $\mathcal{A}$, thus, by Theorem \ref{stabilityRieszaddition}, $a$ is generalized Drazin-Riesz invertible in $\mathcal{A}$.
\end{proof}

The {\it generalized Drazin-Riesz spectrum} of $a$ is defined by $$\sigma_{DR}(a)=\{ \lambda \in \mathbb{C} :  \lambda -a\mbox{ is not generalized Drazin-Riesz invertible} \}.$$

\begin{cor}Let $a\in\mathcal{A}$ and let $p\in\mathcal{A}$ be an idempotent which commutes with $a$. Then
$$\sigma_{DR}(a)=\sigma_{DR}(a(1-p))\cup \sigma_{DR}(ap).$$
\end{cor}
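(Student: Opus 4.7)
The plan is to reduce everything to the preceding corollary, which asserts that for any element commuting with an idempotent $q$, generalized Drazin-Riesz invertibility decomposes into its $q$-part and $(1-q)$-part. I apply this corollary three times, to $\lambda-a$, $\lambda-ap$ and $\lambda-a(1-p)$, each with respect to the same commuting idempotent $p$.

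A preliminary observation is needed: for every scalar $\lambda\in\mathbb{C}$, the elements $\lambda p$ and $\lambda(1-p)$ are generalized Drazin-Riesz invertible in $\mathcal{A}$. Indeed their spectra are contained in $\{0,\lambda\}$, so $0$ is either missing or isolated, hence they are generalized Drazin invertible, and in particular generalized Drazin-Riesz invertible. One can also verify this directly from Theorem \ref{theok}(i): for $\lambda\neq 0$, taking $p$ itself as the auxiliary idempotent, the element $\lambda(1-p)+p$ has inverse $\lambda^{-1}(1-p)+p$ and $\lambda(1-p)\cdot p=0$ is Riesz; the case $\lambda=0$ and the symmetric statement for $\lambda p$ are immediate.

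Since $ap=pa$, the idempotent $p$ commutes with $\lambda-a$, $\lambda-ap$ and $\lambda-a(1-p)$ for every $\lambda\in\mathbb{C}$. Direct computation gives $(\lambda-ap)p=(\lambda-a)p$, $(\lambda-ap)(1-p)=\lambda(1-p)$, and symmetrically $(\lambda-a(1-p))p=\lambda p$, $(\lambda-a(1-p))(1-p)=(\lambda-a)(1-p)$. Applying the previous corollary to $\lambda-ap$ and to $\lambda-a(1-p)$, and using that the summands $\lambda(1-p)$ and $\lambda p$ are always generalized Drazin-Riesz invertible, one obtains that $\lambda-ap$ is generalized Drazin-Riesz invertible if and only if $(\lambda-a)p$ is, and that $\lambda-a(1-p)$ is generalized Drazin-Riesz invertible if and only if $(\lambda-a)(1-p)$ is.

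Finally, applying the previous corollary once more to $\lambda-a$ itself shows that $\lambda-a$ is generalized Drazin-Riesz invertible if and only if both $(\lambda-a)p$ and $(\lambda-a)(1-p)$ are, which by the previous step is equivalent to both $\lambda-ap$ and $\lambda-a(1-p)$ being generalized Drazin-Riesz invertible. Passing to complements in $\mathbb{C}$ yields $\sigma_{DR}(a)=\sigma_{DR}(ap)\cup\sigma_{DR}(a(1-p))$. No step poses a real obstacle; the whole argument is algebraic bookkeeping combined with the decomposition corollary, the only subtlety being to verify that the scalar-times-idempotent remainders produced by the two computations are themselves generalized Drazin-Riesz invertible.
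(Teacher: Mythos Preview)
Your argument is correct and is precisely the intended one: the paper states this corollary without proof, as an immediate consequence of the preceding corollary on the equivalence of generalized Drazin--Riesz invertibility of $a$ with that of $aq$ and $a(1-q)$. Your three applications of that result to $\lambda-a$, $\lambda-ap$ and $\lambda-a(1-p)$, together with the observation that the scalar-times-idempotent terms $\lambda p$ and $\lambda(1-p)$ are always generalized Drazin (hence generalized Drazin--Riesz) invertible, is exactly the bookkeeping needed to pass from the elementwise statement to the spectral identity.
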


For a subset $K$ of $\mathbb{C}$ and $s \in \mathbb{N}$, we set $(K)^{s}=\{ \lambda^{s} \ : \ \lambda \in K \}$.
\begin{prop}
Let $a \in \mathcal{A}$ be generalized Drazin-Riesz invertible, than for $s \in \mathbb{N}$ and for an appropriate $n_{0} \in \mathbb{N}$ we have:
\begin{enumerate}
\item[(i)] $(a^{D,\sigma_{n_{0}}})^{D}$ exists, and $(a^{D,\sigma_{n_{0}}})^{D}=a^{2}a^{D,\sigma_{n_{0}}}$;
\item[(ii)] $a^{D,\sigma_{n_{0}}}(a^{D,\sigma_{n_{0}}})^{D}=aa^{D,\sigma_{n_{0}}}=1-p_{\sigma_{n_{0}}}$;
\item[(iii)] $((a^{D,\sigma_{n_{0}}})^{D})^{D}=a^{D,\sigma_{n_{0}}}$.
\end{enumerate}
\end{prop}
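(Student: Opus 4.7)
The plan is to reduce all three items to the already-established Remark 3.4, which says that any generalized Drazin-Riesz inverse $b$ of $a$ is itself Drazin invertible with $b^D=a^2b$. Applying this with $b=a^{D,\sigma_{n_0}}$ yields statement (i) immediately: since Theorem \ref{expinv} guarantees that $b=a^{D,\sigma_{n_0}}$ is a generalized Drazin-Riesz inverse of $a$, Remark \ref{RemarkDrazin} produces $b^D=a^2b=a^2a^{D,\sigma_{n_0}}$.

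For statement (ii), I would just compute: using the defining relations $ab=ba$ and $bab=b$ with $b=a^{D,\sigma_{n_0}}$, one sees that $ab$ is idempotent (because $(ab)^2=a(bab)=ab$), and hence
\[
a^{D,\sigma_{n_0}}\,(a^{D,\sigma_{n_0}})^D \;=\; b\,(a^2b) \;=\; a\,(bab) \;=\; ab \;=\; aa^{D,\sigma_{n_0}} \;=\; 1-p_{\sigma_{n_0}},
\]
the last equality being the identification already used throughout the section.

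For statement (iii), I set $c=a^2b=b^D$ and verify directly that the triple $(b,c)$ satisfies the Drazin conditions (\ref{equDra}) so that $c^D$ exists and equals $b$. The commutativity $bc=cb$ is immediate from the commutativity of $a$ and $b$. Using that $a$ and $b$ commute one gets $a^2b^2=(ab)^2=ab$, hence $bc=cb=ab$. Then $cbc=c(bc)=c(ab)=a^3b^2=a(a^2b^2)=a(ab)=a^2b=c$, and $c^2b=a^3b^2=ab$, so $c-c^2b=a^2b-a^2b\cdot(ab)/a=\ldots$; more cleanly, $c^2b=(a^2b)(a^2b)\,b=a^4b^3=a^2(a^2b^2)b=a^2\,(ab)\,b=a^2b^2\cdot a = ab\cdot a=a^2b=c$, so $c-c^2b=0$, which is trivially nilpotent. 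Hence $c$ is Drazin invertible with Drazin inverse $b$, i.e.\ $((a^{D,\sigma_{n_0}})^D)^D=a^{D,\sigma_{n_0}}$.

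No genuine obstacle is expected; the only thing to be careful about is keeping track of the commutation relations between $a$ and $b=a^{D,\sigma_{n_0}}$ and repeatedly using the identity $a^2b^2=ab$ (equivalently, that $ab=1-p_{\sigma_{n_0}}$ is idempotent) to collapse products. Once (i) is in place via Remark \ref{RemarkDrazin}, both (ii) and (iii) are pure algebra in $\mathcal A$.
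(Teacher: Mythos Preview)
Your approach is correct and genuinely different from the paper's: the paper disposes of this proposition in one line by citing \cite[Theorem 2.7]{KolgsDr} on the $\sigma g$-Drazin inverse, whereas you argue internally from Remark~\ref{RemarkDrazin} and elementary algebraic manipulations. Your route has the advantage of being self-contained---it never leaves the paper---while the paper's citation places the result in the broader $\sigma g$-Drazin framework where it is already known.

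One small correction in your treatment of (iii): to conclude $c^{D}=b$ via (\ref{equDra}) (with $c$ playing the role of the element and $b$ the role of its Drazin inverse) you must verify $bcb=b$, not $cbc=c$. The condition $cbc=c$ together with $c-c^{2}b=0$ does not by itself pin down $b$ (for instance $c=0$ satisfies these for every $b$). Fortunately $bcb=b$ is just as immediate here: since $bc=ab$ you get
\[
bcb=(ab)b=ab^{2}=(ba)b=bab=b,
\]
so the Drazin conditions (\ref{equDra}) are indeed fulfilled and $((a^{D,\sigma_{n_{0}}})^{D})^{D}=a^{D,\sigma_{n_{0}}}$. With that adjustment, your argument is complete.
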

\begin{proof}
It is a special case of \cite[Theorem 2.7]{KolgsDr}.
\end{proof}

\section{Generalized Drazin-Riesz invertible elements form a regularity }
\noindent

Following M\"{u}ller \cite{Mul}, a non-empty subset $R$ of ${\mathcal A}$ is called a {\it regularity} if it satisfies the following conditions:
\begin{enumerate}
\item[(1)] if $a \in \mathcal{\mathcal A}$ and $n \in \mathbb{N}$, then  $a \in R$ if and only if $a^{n} \in R$;
\item[(2)] if $a,b,c,d$ are mutually commuting elements of $\mathcal{\mathcal A}$ and $ac+bd=1$, then $ab \in R$ if and only if $a \in R$ and $b \in R$.
\end{enumerate}

A regularity $R$ assigns to each $a \in \mathcal{A}$ a subset of $\mathbb{C}$ defined by
$$
\sigma_{R}(a)=\{\lambda \in \mathbb{C}: \lambda -a \notin R\}
$$
and called the spectrum of $a$ corresponding to the regularity $R .$ Since $Inv(\mathcal{A})\subseteq R$, then $\sigma_{R}(a) \subseteq \sigma(a)$. In general, $\sigma_{R}(a)$ is neither compact nor non-empty. However, $f(\sigma_{R}(a))=\sigma_{R}(f(a))$ for
every analytic function $f$ on a neighborhood of $\sigma(a)$ which is non-constant on each component of its domain of definition (see \cite{Mul}).
\smallskip

We denote by ${\mathcal A}^{DR}$ the class of generalized Drazin-Riesz invertible elements in ${\mathcal A}$. The set ${\mathcal A}^{DR}$ is not empty since it contains at least the element $0$. Moreover, the class of generalized Drazin-Riesz invertible elements in a Banach algebra is not necessarily open as shown in the following example.
\begin{ex}\rm
 Let $\mathcal{A}=(C([0,1]),\| . \|_{\infty})$ be the Banach algebra of all continuous complex valued functions on $[0,1]$. For each $n$, let $a_n \in \mathcal{A}$ be defined by $a_n(t)={1\over n} t$. Then we have $\sigma(a_n)= a_n([0,1])=[0,{1\over n}]= acc \, \sigma(a_n)$ and so $\sigma_{b}(a_n)= [0,{1\over n}]$. Thus $0 \in acc\, \sigma_{b}(a_n)$. Finally, $a_n$ is not generalized Drazin-Riesz invertible, while $0$ is generalized Drazin-Riesz invertible. We conclude that the class of generalized Drazin-Riesz invertible elements is not open in $\mathcal{A}$.
\end{ex}

\begin{lem} \label{Lemmakeyregularity}
Let $a,b,v$ and $w \in \mathcal{A}$ be mutually commuting elements such that $av+bw=1$. If $ab$ is generalized Drazin-Riesz invertible then $a$ and $b$ are generalized Drazin-Riesz invertible.
\end{lem}
\begin{proof}
Assume that $ab$ is generalized Drazin-Riesz invertible. If $0 \notin acc\, \sigma(ab)$, then $ab$ is generalized Drazin invertible and by \cite[Lemma 1.1]{Lub}, $a$ and $b$ are generalized Drazin invertible. In particular, they are generalized Drazin-Riesz invertible.\\
So suppose that $0 \in acc \, \sigma(ab)$. Theorem \ref{expinv} implies that there exists a positive integer $n_{0}$ such that $(ab)^{D,\sigma_{n_{0}}}$ is a generalized Drazin-Riesz inverse of $ab$ and  $p_{\sigma_{n_{0}}}=1-(ab)(ab)^{D,\sigma_{n_{0}}}$ is the spectral idempotent of $ab$ related to $\sigma_{n_{0}}$.
\smallskip

\noindent \textbf{Claim 1.} $b(ab)^{D,\sigma_{n_{0}}}$ is a generalized Drazin-Riesz inverse for $a(1-p_{\sigma_{n_{0}}})$:
\smallskip

\noindent Set $u=b(ab)^{D,\sigma_{n_{0}}}$. As $ab$ commutes with $u$, then $p_{\sigma_{n_{0}}}$ and $u$ commute. Also $a$ commutes with $(ab)^{D,\sigma_{n_{0}}}$ and with $b$, we conclude that $u$ commutes with $a(1-p_{\sigma_{n_{0}}})$. We point out that
$u(1-p_{\sigma_{n_{0}}})=u$ as $(ab)^{D,\sigma_{n_{0}}}p_{\sigma_{n_{0}}}=0$. We get, \\
$$ua=au=ab(ab)^{D,\sigma_{n_{0}}}=1-p_{\sigma_{n_{0}}}$$ and $$ua(1-p_{\sigma_{n_{0}}})u=(1-p_{\sigma_{n_{0}}})^{2}u=u.$$
Further,
$$a(1-p_{\sigma_{n_{0}}})-a(1-p_{\sigma_{n_{0}}})ua(1-p_{\sigma_{n_{0}}})=a(1-p_{\sigma_{n_{0}}})-a(1-p_{\sigma_{n_{0}}})^{3}=0.$$
Therefore, $a(1-p_{\sigma_{n_{0}}})$ is generalized Drazin-Riesz invertible and $u$ is a generalized Drazin-Riesz inverse for $a(1-p_{\sigma_{n_{0}}})$.\\
We notice also that
$$1-ua(1-p_{\sigma_{n_{0}}})=p_{\sigma_{n_{0}}}.$$
Hence $p_{\sigma_{n_{0}}}$ coincides with the idempotent associated with the generalized Drazin-Riesz inverse $u$ of $a(1-p_{\sigma_{n_{0}}})$.
\smallskip

\noindent \textbf{Claim 2.} $avp_{\sigma_{n_{0}}}$ is generalized Drazin-Riesz invertible:
\smallskip

\noindent We have
\begin{align*}
avp_{\sigma_{n_{0}}}-(avp_{\sigma_{n_{0}}})^{2}& = av (1-av)p_{\sigma_{n_{0}}} \\
                                               & = avbwp_{\sigma_{n_{0}}}=(abp_{\sigma_{n_{0}}})vw.
\end{align*}
Since $abp_{\sigma_{n_{0}}}$ is Riesz and commutes with $vw$, then by Theorem \ref{TheoRieszstability}, we obtain that $(abp_{\sigma_{n_{0}}})vw$ is Riesz. Thus, $avp_{\sigma_{n_{0}}}-(avp_{\sigma_{n_{0}}})^{2}$ is Riesz. \\
Also, by considering the analytic function $f$ on $\mathbb{C}$ defined by $f(\lambda)=\lambda-\lambda^{2}$, we have
\begin{align*}f(\sigma(\pi(avp_{\sigma_{n_{0}}}))) &=\sigma(f(\pi(avp_{\sigma_{n_{0}}}))) \\
         & = \sigma(\pi(avp_{\sigma_{n_{0}}}-(avp_{\sigma_{n_{0}}})^{2}))=\{0\}.
\end{align*}
Then $\sigma(\pi(avp_{\sigma_{n_{0}}})) \subset \{0,1\}$ and so $0 \in iso \ \sigma_{e}(avp_{\sigma_{n_{0}}})\cap iso \sigma_{e}(1-avp_{\sigma_{n_{0}}})$. Thus $0 \notin acc \,  \sigma_{e}(avp_{\sigma_{n_{0}}})\cap acc\,\sigma_{e}(1-avp_{\sigma_{n_{0}}})$.\\ Let $c\in \{1-avp_{\sigma_{n_{0}}}, avp_{\sigma_{n_{0}}} \}$. We will show that $0 \notin  acc ( acc \, \sigma(c))$.
 By the sake of contradiction assume that $0 \in  acc \, (acc \, \sigma(c))$. There exists $(\lambda_{i})_{i} \subset  acc \, \sigma(c)$ such that $\lambda_{i} \longrightarrow 0, \mbox{ as } i \longrightarrow \infty$. Thus for all $i \in \mathbb{N}$ there exists a sequence $(s_{i,k})_{k} \subset \sigma(c)$ such that $s_{i,k} \longrightarrow \lambda_{i}, \mbox{ as } k \longrightarrow \infty$. Hence $f(s_{i,k}) \longrightarrow f(\lambda_{i}), \mbox{ as } k \longrightarrow \infty$, and using the spectral mapping theorem, we get
$(f(\lambda_{i}))_{i} \subset  acc \, \sigma(f(c))$. Then $f(\lambda_{i}) \longrightarrow 0, \mbox{ as } i \longrightarrow \infty$, therefore
$$0 \in  acc \, (acc \, \sigma(f(c))) \subset acc \, \sigma_{b}(c-c^{2})=  acc \, \sigma_{b}(avp_{\sigma_{n_{0}}}-(avp_{\sigma_{n_{0}}})^{2}),$$
which is a contradiction.
Consequently, $0 \notin  acc \, \sigma_{b}(c)$. By virtue of Theorem \ref{theok} we conclude that $avp_{\sigma_{n_{0}}}$ and $1-avp_{\sigma_{n_{0}}}$ are generalized Drazin-Riesz invertible.
\smallskip

\noindent \textbf{Claim 3.} $ap_{\sigma_{n_{0}}}$ is generalized Drazin-Riesz invertible:
\smallskip

\noindent Let $\varpi_{k}$ be the spectral set associated with the generalized Drazin-Riesz invertible element $avp_{\sigma_{n_{0}}}$. We show that  $q=vp_{\sigma_{n_{0}}}(avp_{\sigma_{n_{0}}})^{D,\varpi_{k}}$ is a generalized Drazin-Riesz inverse of $ap_{\sigma_{n_{0}}}$, where $\varpi_{k}$ is a spectral set composed of $0$ union the sequence  of non-zero Riesz points of $avp_{\sigma_{n_{0}}}$.
\smallskip

We have $q$ commutes with $ap_{\sigma_{n_{0}}}$, while
$$qap_{\sigma_{n_{0}}}q=vp_{\sigma_{n_{0}}}(avp_{\sigma_{n_{0}}})^{D,\varpi_{k}}ap_{\sigma_{n_{0}}}vp_{\sigma_{n_{0}}}(avp_{\sigma_{n_{0}}})^{D,\varpi_{k}}=vp_{\sigma_{n_{0}}}(avp_{\sigma_{n_{0}}})^{D,\varpi_{k}}=q.$$
And
\begin{align}
ap_{\sigma_{n_{0}}}-ap_{\sigma_{n_{0}}}qap_{\sigma_{n_{0}}}&=ap_{\sigma_{n_{0}}}(1-vp_{\sigma_{n_{0}}}(avp_{\sigma_{n_{0}}})^{D,\varpi_{k}}ap_{\sigma_{n_{0}}}) \label{eq3.1} \\
& = ap_{\sigma_{n_{0}}}(1-avp_{\sigma_{n_{0}}}(avp_{\sigma_{n_{0}}})^{D,\varpi_{k}}) \\
& = a(av+bw)p_{\sigma_{n_{0}}}(1-avp_{\sigma_{n_{0}}}(avp_{\sigma_{n_{0}}})^{D,\varpi_{k}}) \\
& = a(avp_{\sigma_{n_{0}}})(1-avp_{\sigma_{n_{0}}}(avp_{\sigma_{n_{0}}})^{D,\varpi_{k}})) \\
& + abp_{\sigma_{n_{0}}}(w(1-avp_{\sigma_{n_{0}}}(avp_{\sigma_{n_{0}}})^{D,\varpi_{k}})). \label{eq3.5}
\end{align}
Notice that all elements of (\ref{eq3.1})-(\ref{eq3.5}) commute, and $1-avp_{\sigma_{n_{0}}}(avp_{\sigma_{n_{0}}})^{D,\varpi_{k}}$ is the spectral idempotent of $avp_{\sigma_{n_{0}}}$ related to $\varpi_{k}$.
\smallskip

 According to Theorem \ref{TheoRieszstability} and taking into account that $(avp_{\sigma_{n_{0}}})(1-avp_{\sigma_{n_{0}}}(avp_{\sigma_{n_{0}}})^{D,\varpi_{k}}))$ and $abp_{\sigma_{n_{0}}}$ are Riesz elements, we obtain that $ a(avp_{\sigma_{n_{0}}})(1-avp_{\sigma_{n_{0}}}(avp_{\sigma_{n_{0}}})^{D,\varpi_{k}}))$ and
 $abp_{\sigma_{n_{0}}}(w(1-avp_{\sigma_{n_{0}}}(avp_{\sigma_{n_{0}}})^{D,\varpi_{k}}))$  are both Riesz elements. Again according to Theorem \ref{TheoRieszstability}, their sum is also a Riesz element. Hence $ap_{\sigma_{n_{0}}}-ap_{\sigma_{n_{0}}}qap_{\sigma_{n_{0}}}$ is a Riesz element.
 Finally, $q$ is a generalized Drazin-Riesz inverse of $ap_{\sigma_{n_{0}}}$.
\smallskip

\noindent \textbf{Claim 4.} $a$ is generalized Drazin-Riesz invertible:
\smallskip

\noindent We have shown that $a(1-p_{\sigma_{n_{0}}})$ and $ap_{\sigma_{n_{0}}}$ are generalized Drazin-Riesz invertible. Since they commute and their multiplication  is equal to 0, we use Theorem \ref{stabilityRieszaddition} to conclude that $a=a(1-p_{\sigma_{n_{0}}})+ap_{\sigma_{n_{0}}}$ is generalized Drazin-Riesz invertible.
\end{proof}
\begin{thm} \label{regularity}
The class $\mathcal{A}^{DR}$ of all generalized Drazin-Riesz invertible elements forms a regularity.
\end{thm}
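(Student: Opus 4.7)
Using Theorem \ref{theok}(vii), membership in $\mathcal{A}^{DR}$ is equivalent to the condition $0\notin acc\,\sigma_b(a)$. My strategy is to verify the two axioms of a regularity using this spectral characterization, reducing each axiom to a statement about the Browder spectrum.

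\textbf{Axiom (1).} I want to show $0\notin acc\,\sigma_b(a)$ if and only if $0\notin acc\,\sigma_b(a^n)$. The key is a Browder version of the spectral mapping theorem at $0$, obtained from the factorization $a^n-\mu=\prod_{j=0}^{n-1}(\mu^{1/n}\omega^j-a)$ with $\omega=e^{2\pi i/n}$, whose factors mutually commute. By Lemma \ref{lem1}, $\mu-a^n$ fails to be Browder exactly when some factor $\mu^{1/n}\omega^j-a$ fails to be Browder. Starting from a sequence $\lambda_k\in\sigma_b(a)\setminus\{0\}$ with $\lambda_k\to 0$ produces $\lambda_k^n\in\sigma_b(a^n)\setminus\{0\}$ with $\lambda_k^n\to 0$; conversely, from $\mu_k\in\sigma_b(a^n)\setminus\{0\}$ with $\mu_k\to 0$, one selects for each $k$ a root $\lambda_k$ of $\mu_k$ for which $\lambda_k-a$ is not Browder, and $|\lambda_k|=|\mu_k|^{1/n}\to 0$. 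This yields the equivalence.

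\textbf{Axiom (2), forward direction.} Assume $a,b$ commute and both lie in $\mathcal{A}^{DR}$ (here $c,d$ are not needed). Choose generalized Drazin--Riesz inverses $a'$ of $a$ and $b'$ of $b$ of the form given by Theorem \ref{expinv}, i.e.\ $a'=(a-p_{\sigma_n})^{-1}(1-p_{\sigma_n})$ and similarly for $b'$, so that $a'$ is a holomorphic function of $a$ (Theorem \ref{holomorphicfctandDR}, or if $0\notin acc\,\sigma(a)$ the same holds trivially) and $b'$ a holomorphic function of $b$. Then $a,a',b,b'$ mutually commute. I propose $a'b'$ as a generalized Drazin--Riesz inverse of $ab$: the identities $(ab)(a'b')=(a'b')(ab)$ and $(a'b')(ab)(a'b')=a'b'$ are immediate, while
\begin{equation*}
ab-(ab)^2(a'b')=ab\bigl(1-aa'\bigr)+aa'(ab)(1-bb')=b(a-a^2a')+a^2a'(b-b^2b')
\end{equation*}
is a sum of two commuting Riesz elements (since $a-a^2a'$ and $b-b^2b'$ are Riesz by hypothesis, and each is multiplied by a commuting factor), hence Riesz by Theorem \ref{TheoRieszstability}(i),(ii). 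Thus $ab\in\mathcal{A}^{DR}$.

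\textbf{Axiom (2), backward direction.} This is exactly the content of Lemma \ref{Lemmakeyregularity}, which delivers both $a\in\mathcal{A}^{DR}$ and $b\in\mathcal{A}^{DR}$ from $ab\in\mathcal{A}^{DR}$ under the hypothesis $ac+bd=1$ with mutually commuting $a,b,c,d$. The expected main obstacle was the forward direction of Axiom (2), but it dissolves once the generalized Drazin--Riesz inverses are chosen as holomorphic functions of $a$ and $b$ so that everything commutes; the rest is the Riesz stability encoded in Theorem \ref{TheoRieszstability}.
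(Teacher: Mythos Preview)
Your proposal is correct, and for Axiom (2) it follows a route genuinely different from the paper's. For the forward implication in (2), the paper does not exhibit an inverse for $ab$ directly; instead it builds the idempotent $p=1-(1-p_{\sigma_{n_0,a}})(1-p_{\sigma_{n_0,b}})$, shows $abp$ is Riesz and $ab+p$ is Browder, and then invokes characterization (v) of Theorem~\ref{theok}. Your argument bypasses this Browder detour entirely: by choosing $a'$ and $b'$ as holomorphic functions of $a$ and $b$ (Theorem~\ref{holomorphicfctandDR}, or the classical generalized Drazin inverse when $0\notin acc\,\sigma$), the full commutativity of $a,a',b,b'$ is automatic, and the candidate $a'b'$ can be verified directly from the defining axioms and Theorem~\ref{TheoRieszstability}. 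This is shorter and more transparent, and in fact proves the stronger statement that the product of any two commuting elements of $\mathcal{A}^{DR}$ lies in $\mathcal{A}^{DR}$, without the Bezout hypothesis $ac+bd=1$. For Axiom (1) the paper instead decomposes $\sigma_b=\sigma_e\cup acc\,\sigma$ and applies the spectral mapping theorem to each piece; your factorization argument via Lemma~\ref{lem1} is an equally valid alternative (note the harmless sign $(-1)^n$ in your factorization identity). Both proofs appeal to Lemma~\ref{Lemmakeyregularity} for the backward direction of (2).
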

\begin{proof}
1) If $a \in \mathcal{A}^{DR}$, it is easy to see that $a^{n}$ is also generalized Drazin-Riesz invertible. Conversely, let $n\in\mathbb{N}$ and
assume that $a^{n} \in \mathcal{A}^{DR}$. Then by Theorem \ref{theok}, $0 \notin  acc \, \sigma_{b}(a^{n})= acc(\sigma_{e}(a^n) \cup   acc (\sigma(a^{n})))$.
  Since, $\sigma_{e}(a^{n})=\{ \lambda^{n} \ : \ \lambda \in \sigma_{e}(a) \}$, and $\sigma(a^{n})=\{\lambda^{n} \ : \ \lambda \in \sigma(a) \}$, we conclude that $0 \notin  acc \, \sigma_{b}(a)= acc \, (\sigma_{e}(a) \cup acc \, \sigma(a) )$.  Again by Theorem \ref{theok}, we deduce that $a$ is generalized Drazin-Riesz invertible. \\
2) Let $a,b,v$ and $w \in \mathcal{A}$ be mutually commuting elements such that $av+bw=1$. If $ab$ is generalized Drazin-Riesz invertible then by Lemma \ref{Lemmakeyregularity},  $a$ and $b$ are generalized Drazin-Riesz invertible.
\smallskip

Conversely, assume that $a$ and $b$ are generalized Drazin-Riesz invertible elements in $\mathcal{A}$. Set $$p=1-(1-p_{\sigma_{n_{0}},a})(1-p_{\sigma_{n_{0}},b}),$$ where $p_{\sigma_{n_{0}},a}$ (resp. $p_{\sigma_{n_{0}},b}$) is the spectral idempotent associated with the spectral set $\sigma_{n_{0},a}$ (resp. $\sigma_{n_{0},b}$) of $a$ (resp. $b$).\\
Then $$a+p_{\sigma_{n_{0},a}}\mbox{ and }b+p_{n_{0},b}\mbox{ are Browder elements in  }\mathcal{A},$$ and $$ap_{\sigma_{n_{0},a}}\mbox{ and }bp_{\sigma_{n_{0},b}}\mbox{ are Riesz elements in }\mathcal{A}.$$
Using Lemma \ref{lem1}, $(a+p_{\sigma_{n_{0},a}})(b+p_{\sigma_{n_{0},b}})$ is Browder. Hence, $(a+p_{\sigma_{n_{0},a}})(b+p_{\sigma_{n_{0},b}})(1-p)=ab(1-p)$ is also a Browder element in $(1-p)\mathcal{A}(1-p)$.\\
Since $1-p=p_{\sigma_{n_{0},a}}+p_{\sigma_{n_{0},b}}-p_{\sigma_{n_{0},a}}p_{\sigma_{n_{0},b}}$, then $$abp=ap_{\sigma_{n_{0},a}}b+abp_{\sigma_{n_{0},b}}-ap_{\sigma_{n_{0},a}}bp_{\sigma_{n_{0},b}}.$$
  It follows from Theorem \ref{TheoRieszstability} that $abp$ is a Riesz element in $\mathcal{A}$.\\
On the other hand, $$ab+p=abp+p+ab(1-p)=(1+abp)p+ab(1-p).$$ We have shown that $ab(1-p)$ is a Browder element in $(1-p)\mathcal{A}(1-p)$. Using  \cite[Corollary 4.13]{Pearlman} $(1+abp)p$ is also a Browder element in $p\mathcal{A}p$ and so $ab+p$ is a Browder element in $\mathcal{A}$. . Therefore we conclude that $ab \in \mathcal{A}^{DR}$.
\end{proof}

The spectral mapping theorem for the spectrum relative to the regularity $\mathcal{A}^{DR}$ denoted by $\sigma_{DR}(a)$ is an immediate consequence of Theorem \ref{regularity} and  \cite[Theorem I.6.7]{Mul}.
\begin{thm} \label{SpectralmappingtheoforDR}
Let $a \in \mathcal{A}$. If $f$ is any function holomorphic in an open neighborhood of  $\sigma(a)$ and non-constant on any component of $\sigma(a)$, then
$$f(\sigma_{DR}(a))=\sigma_{DR}(f(a)).$$
\end{thm}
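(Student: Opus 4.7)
My plan is to deduce this theorem as a direct application of the abstract spectral mapping theorem for regularities, exploiting the work already done in Theorem \ref{regularity}. By definition, a regularity $R \subseteq \mathcal{A}$ is a non-empty set closed under the two axioms (taking $n$-th powers in both directions, and a coprime-factorization/commutativity axiom), and its associated spectrum is defined by $\sigma_{R}(a)=\{\lambda\in\mathbb{C} : \lambda - a \notin R\}$. Müller's general spectral mapping theorem \cite[Theorem I.6.7]{Mul} asserts that for any regularity $R$, the spectrum $\sigma_R$ satisfies
\[
f(\sigma_{R}(a))=\sigma_{R}(f(a))
\]
for every function $f$ holomorphic in an open neighborhood of $\sigma(a)$ and non-constant on each component of its domain of definition.

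The first step is therefore simply to invoke Theorem \ref{regularity}, which asserts precisely that $\mathcal{A}^{DR}$ satisfies the two axioms of a regularity. With this in hand, the generalized Drazin-Riesz spectrum $\sigma_{DR}(a) = \{\lambda\in\mathbb{C} : \lambda - a \notin \mathcal{A}^{DR}\}$ is exactly the spectrum $\sigma_{R}(a)$ associated with the regularity $R = \mathcal{A}^{DR}$.

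The second and final step is to apply \cite[Theorem I.6.7]{Mul} to this specific regularity. Under the hypothesis that $f$ is holomorphic on an open neighborhood of $\sigma(a)$ and non-constant on any component of $\sigma(a)$, the abstract theorem yields $f(\sigma_{DR}(a)) = \sigma_{DR}(f(a))$, which is the claim.

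There is essentially no obstacle here because all the real content has already been packaged into Theorem \ref{regularity}; the hard steps were the verification of the coprime-factorization axiom in Lemma \ref{Lemmakeyregularity} and the $n$-th power stability, both of which required the characterizations from Theorem \ref{theok} and the stability of Riesz elements under commuting sums and products (Theorem \ref{TheoRieszstability}). The present theorem is then obtained simply by transporting Müller's abstract result through the identification $R=\mathcal{A}^{DR}$. I would present the proof as a single short paragraph noting that Theorem \ref{regularity} places $\sigma_{DR}$ in the scope of \cite[Theorem I.6.7]{Mul} and then quoting the conclusion.
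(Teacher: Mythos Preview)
Your proposal is correct and matches the paper's own proof exactly: the paper states that the result is an immediate consequence of Theorem \ref{regularity} together with \cite[Theorem I.6.7]{Mul}, which is precisely the argument you outline.
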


\section{Further results for generalized Drazin-Riesz invertible elements}

This section is devoted to generalize various results in \cite{Djor} to the class of generalized Drazin-Riesz invertible elements.
\smallskip
\begin{thm} \label{Theo6.1}
Let $a \in \mathcal{A}$ be generalized Drazin-Riesz invertible, and $a^{D,\sigma_{n}}$ be a generalized Drazin-Riesz inverse of $a$ for a large enough $n$, then
$$(a^{D,\sigma_{n}})^{s}=(a^{s})^{D,(\sigma_{n})^{s}}$$ for all  $s\in\mathbb{N}$.
\end{thm}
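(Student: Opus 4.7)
The plan is to verify step by step that $(a^{D,\sigma_n})^s$ coincides with the element prescribed by Definition \ref{def} for $(a^s)^{D,(\sigma_n)^s}$. The whole argument reduces to showing that $(\sigma_n)^s$ is a spectral set of $a^s$ and that its associated spectral idempotent is $p_{\sigma_n}$ itself; after that, the equality drops out of the block decomposition.

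First, I would show that for $n$ large enough, $(\sigma_n)^s$ is a spectral set of $a^s$ containing $0$. Since $a$ is generalized Drazin-Riesz invertible, Theorem \ref{theok} yields $0 \notin acc\,\sigma_b(a)$, so $\delta:=\inf\{|\mu|:\mu\in\sigma_b(a)\setminus\{0\}\}>0$. For $n$ large enough to also have $r_n=\sup\{|\lambda|:\lambda\in\sigma_n\}<\delta$, every $\mu\in\sigma_n'$ satisfies $|\mu|\geq\delta>r_n\geq|\lambda|$ for all $\lambda\in\sigma_n$, hence $|\mu^s|>r_n^s\geq|\lambda^s|$. In particular $(\sigma_n)^s\cap(\sigma_n')^s=\emptyset$; their union is $\sigma(a^s)$ by the classical spectral mapping theorem, so $(\sigma_n)^s$ is a spectral set of $a^s$ containing $0$.

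Next I would identify the spectral idempotent of $a^s$ corresponding to $(\sigma_n)^s$ as $p_{\sigma_n}$, by verifying the three conditions of Theorem \ref{thm1}. Clearly $p_{\sigma_n}$ commutes with $a^s$. From $\sigma_{p_{\sigma_n}\mathcal{A}p_{\sigma_n}}(ap_{\sigma_n})=\sigma_n$ (Theorem \ref{thm1} applied to $a$), the spectral mapping theorem inside the subalgebra $p_{\sigma_n}\mathcal{A}p_{\sigma_n}$ gives $\sigma_{p_{\sigma_n}\mathcal{A}p_{\sigma_n}}(a^sp_{\sigma_n})=(\sigma_n)^s$, hence $\sigma(a^sp_{\sigma_n})=(\sigma_n)^s$ since $0\in(\sigma_n)^s$. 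Invertibility of $a^s-\mu-\xi p_{\sigma_n}$ for $\mu\in(\sigma_n)^s$ and $|\xi|>2r_n^s$ is checked block by block: the $(1-p_{\sigma_n})$-block is $(a^s-\mu)(1-p_{\sigma_n})$, invertible in $(1-p_{\sigma_n})\mathcal{A}(1-p_{\sigma_n})$ because $\mu\notin(\sigma_n')^s$; the $p_{\sigma_n}$-block is $(a^s-\mu-\xi)p_{\sigma_n}$, invertible in $p_{\sigma_n}\mathcal{A}p_{\sigma_n}$ since $|\mu+\xi|\geq|\xi|-|\mu|>r_n^s$ keeps $\mu+\xi$ out of $(\sigma_n)^s$. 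Theorem \ref{thm1} then yields $p_{\sigma_n}=p_{(\sigma_n)^s}$.

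Finally, Definition \ref{def} gives $(a^s)^{D,(\sigma_n)^s}=(a^s-\xi p_{\sigma_n})^{-1}(1-p_{\sigma_n})$ for $|\xi|>2r_n^s$, which in block form equals $(a^s(1-p_{\sigma_n}))^{-1}_{(1-p_{\sigma_n})\mathcal{A}(1-p_{\sigma_n})}$, extended by $0$ on $p_{\sigma_n}\mathcal{A}p_{\sigma_n}$. On the other hand, $(a^{D,\sigma_n})^s=\bigl((a(1-p_{\sigma_n}))^{-1}_{(1-p_{\sigma_n})\mathcal{A}(1-p_{\sigma_n})}\bigr)^s=\bigl(a^s(1-p_{\sigma_n})\bigr)^{-1}_{(1-p_{\sigma_n})\mathcal{A}(1-p_{\sigma_n})}$, extended by $0$. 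The two expressions agree, establishing the claim. The main obstacle is the disjointness $(\sigma_n)^s\cap(\sigma_n')^s=\emptyset$: because $z\mapsto z^s$ is not injective on $\mathbb{C}$, one must exploit crucially the modulus gap between $\sigma_n$ (elements near $0$) and $\sigma_n'$ (bounded away from $0$, thanks to $0\notin acc\,\sigma_b(a)$), and choose $n$ large enough.
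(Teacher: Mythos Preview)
Your overall strategy matches the paper's: both hinge on the disjointness $(\sigma_n)^s\cap(\sigma_n')^s=\emptyset$, established via a modulus gap between $\sigma_n$ and $\sigma_n'$. There is, however, a small slip in your justification. You set $\delta=\inf\{|\mu|:\mu\in\sigma_b(a)\setminus\{0\}\}$ and assert that every $\mu\in\sigma_n'$ has $|\mu|\ge\delta$, but $\sigma_n'$ also contains the Riesz points $\lambda_1,\dots,\lambda_n$ and the points of $\overline{p_{00}(a)}\setminus\omega(a)$, none of which lie in $\sigma_b(a)$ and some of which may have modulus strictly smaller than $\delta$. The paper handles this by simply declaring that one can choose $n$ so that $\sup_{\lambda\in\sigma_n}|\lambda|<\min(\tfrac12,\eta)<\inf_{\mu\in\sigma_n'}|\mu|$ for a suitable $\eta>0$; since the sequence $(|\lambda_k|)$ decreases to $0$, infinitely many such $n$ exist, and your argument is repaired in exactly the same way.

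Where you genuinely differ is after disjointness is secured. The paper immediately cites \cite[Theorem~2.7(i)]{KolgsDr}, which already delivers $(a^{D,\sigma})^s=(a^s)^{D,\sigma^s}$ once $\sigma^s$ is known to be a spectral set of $a^s$. You instead reprove that statement in the present setting: you identify $p_{(\sigma_n)^s}$ with $p_{\sigma_n}$ by checking conditions (i)--(iii) of Theorem~\ref{thm1}, and then compare the two $\sigma$g-Drazin inverses via their block decompositions in $(1-p_{\sigma_n})\mathcal{A}(1-p_{\sigma_n})$. This is more self-contained and makes the mechanism transparent, at the cost of the extra verification; the paper's one-line citation is shorter but leans on the external Daji\'c--Koliha machinery.
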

\begin{proof}

 Let $a$ be generalized Drazin-Riesz invertible, with $a^{D,\sigma_{n}}$ one of its generalized Drazin-Riesz inverses for a large enough $n$. Then $0 \notin \sigma_{DR}(a)$, using Theorem \ref{SpectralmappingtheoforDR}, we get  $0 \notin \sigma_{DR}(a^{s}),$ for all $s\in \mathbb{N}$. Therefore $a^{s}$ is generalized Drazin-Riesz invertible.\\
We have $(\sigma(a))^{s}=\sigma(a^{s})=(\sigma_{n}')^{s} \cup (\sigma_{n})^{s}$. Indeed, for $n$ large enough we have $\underset{\lambda \in \sigma_{n}}{\sup} |\lambda|< \frac{1}{2}$, and $0 \in iso \, (\sigma'_{n} \cup \{0\})$, this means that there exists $\eta > 0$ such that $(D(0,\eta)\setminus \{0\} ) \cap \sigma'_{n}= \emptyset$. Hence $\underset{\lambda \in \sigma'_{n}}{\inf} |\lambda| > \eta$, we can choose $n$ such that $\underset{\lambda \in \sigma_{n}}{\sup} |\lambda|< \min(\frac{1}{2},\eta)$ and $\underset{\lambda \in \sigma'_{n}}{\inf} |\lambda| > \min(\frac{1}{2},\eta)$. Thus, there is no power $s$ and no elements $\lambda \in \sigma'_{n}$ and $\gamma \in \sigma_{n}$ such that $(\lambda)^{s}=(\gamma)^{s}$. Therefore, $$(\sigma'_{n})^{s} \cap (\sigma_{n})^{s}= \emptyset.$$
Now the result follows at once from \cite[Theorem 2.7(i)]{KolgsDr}.
\end{proof}

\begin{thm} \label{Theo1.1Djordjevic}
Let $a,b \in \mathcal{A}$, $ab=ba$, $0 \notin  acc \, \sigma_{b}(a) \cup  acc \, \sigma_{b}(b)$, $0 \in (acc \, \sigma(a)) \cap  (acc \, \sigma(b))$, and $(a+\lambda b)^{-1}$ exists for some $\lambda \in \mathbb{C}$. Then, there exists $n_{0} \in \mathbb{N}$ such that $\sigma_{n_{0},a}$ and $\sigma_{n_{0},b}$ are spectral sets of $\sigma(a)$ and $\sigma(b)$ respectively, and
$$(1-aa^{D,\sigma_{n_{0},a}})bb^{D,\sigma_{n_{0},b}}=(1-aa^{D,\sigma_{n_{0},a}}).$$
\end{thm}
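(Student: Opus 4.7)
Setting $p_a = 1 - aa^{D,\sigma_{n_0,a}}$ and $p_b = 1 - bb^{D,\sigma_{n_0,b}}$ for the spectral idempotents of $a$ and $b$ corresponding to the spectral sets $\sigma_{n_0,a}$ and $\sigma_{n_0,b}$, the claimed equality $p_a(1-p_b) = p_a$ rewrites simply as $p_a p_b = 0$. My plan is to choose $n_0$ large enough that $\sigma_{n_0,a}$ and $\sigma_{n_0,b}$ lie in an arbitrarily small disc around $0$, and then to use the invertibility of $a + \lambda b$ together with Aupetit's lemma to squeeze the spectrum of $(a+\lambda b)\,p_a p_b$, computed inside the corner algebra $p_a p_b\,\mathcal{A}\,p_a p_b$, between two incompatible bounds.

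First I would record the commutativity structure: since $ab = ba$, the resolvents of $a$ and $b$ commute, so the contour-integral idempotents $p_a$ and $p_b$ commute with each other and with both $a$ and $b$. By Theorem \ref{expinv} together with the hypothesis $0 \in \mathrm{acc}\,\sigma(a) \cap \mathrm{acc}\,\sigma(b)$, for every sufficiently large $n_0$ the sets $\sigma_{n_0,a} = \{0,\lambda_{n_0+1,a},\lambda_{n_0+2,a},\ldots\}$ and $\sigma_{n_0,b}$ are spectral sets consisting of $0$ together with null sequences of Riesz points; in particular, given any prescribed $\delta > 0$, both are contained in $\overline{D}(0,\delta)$ once $n_0$ is large enough.

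Now fix $\lambda_0 \in \mathbb{C}$ with $a + \lambda_0 b$ invertible, set $L = \mathrm{dist}(0,\sigma(a+\lambda_0 b)) > 0$, choose $\delta > 0$ with $\delta(1+|\lambda_0|) < L$, and fix $n_0$ as above. Put $e = p_a p_b$ and suppose, for a contradiction, $e \neq 0$. For the upper bound, Aupetit's lemma \cite[Lemma 6]{Aupetit} gives $\sigma_{\mathcal{A}}(ap_a) = \sigma_{p_a\mathcal{A}p_a}(ap_a) \cup \{0\} = \sigma_{n_0,a}$, so $r(ap_a) \leq \delta$; since $ap_a$ commutes with $p_b$, $r(ae) \leq r(ap_a)r(p_b) \leq \delta$ and likewise $r(be) \leq \delta$, whence $r_{\mathcal{A}}((a+\lambda_0 b)e) \leq \delta(1+|\lambda_0|) < L$ by commutativity of $ae$ and $\lambda_0 be$, and Aupetit's lemma transfers this to $\sigma_{e\mathcal{A}e}((a+\lambda_0 b)e) \subset \overline{D}(0,\delta(1+|\lambda_0|))$. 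For the lower bound, $(a+\lambda_0 b)^{-1}$ commutes with $e$, so $y = (a+\lambda_0 b)^{-1}e$ inverts $(a+\lambda_0 b)e$ inside $e\mathcal{A}e$; commutativity yields $r_{\mathcal{A}}(y) \leq r((a+\lambda_0 b)^{-1})\,r(e) \leq 1/L$, so $r_{e\mathcal{A}e}(y) \leq 1/L$ by Aupetit's lemma, and the spectral mapping theorem for inversion forces every $\mu \in \sigma_{e\mathcal{A}e}((a+\lambda_0 b)e)$ to satisfy $|\mu| \geq L$. The two bounds together make $\sigma_{e\mathcal{A}e}((a+\lambda_0 b)e)$ empty, which is impossible in the nontrivial unital Banach algebra $e\mathcal{A}e$; hence $e = p_a p_b = 0$, which is the desired identity.

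The step I expect to be most delicate is the Aupetit transfer between $\mathcal{A}$ and the corner algebra $e\mathcal{A}e$: the upper estimate on the spectrum is natural to compute in $\mathcal{A}$ while the lower estimate is natural in $e\mathcal{A}e$, and the two only become comparable by invoking Aupetit's lemma in both directions together with the sharp choice $\delta(1+|\lambda_0|) < L$ that makes the squeeze genuinely contradictory; everything else reduces to the standard inequalities $r(xy) \leq r(x)r(y)$ and $r(x+y) \leq r(x)+r(y)$ for commuting elements.
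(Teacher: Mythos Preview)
Your argument is correct and takes a genuinely different route from the paper's. The paper first normalizes to $\lambda=-1$, writes $p_{\sigma_{n_0,a}}$ and $p_{\sigma_{n_0,b}}$ as Cauchy integrals over a common small contour $\Gamma_{n_0}$, and then kills the double integral $\int\!\!\int (z-a)^{-1}(u-b)^{-1}\,du\,dz$ via the resolvent identity $(z-a)^{-1}(u-b)^{-1}=\bigl((u-z)+(a-b)\bigr)^{-1}\bigl((z-a)^{-1}-(u-b)^{-1}\bigr)$, noting that for $n_0$ large the factor $(u-z)+(a-b)$ stays invertible so each single integral vanishes by analyticity. Your proof instead works in the corner $e\mathcal{A}e$ with $e=p_ap_b$ and squeezes $\sigma_{e\mathcal{A}e}\bigl((a+\lambda_0 b)e\bigr)$ between an upper bound $\delta(1+|\lambda_0|)$ coming from $r(ap_a),\,r(bp_b)\le\delta$ and a lower bound $L=\mathrm{dist}(0,\sigma(a+\lambda_0 b))$ coming from the invertibility hypothesis, forcing $e=0$. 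The paper's approach is the classical Riesz--Dunford computation and makes transparent why the same contour can serve both idempotents; yours avoids contour integrals entirely, relying only on $r(xy)\le r(x)r(y)$ and $r(x+y)\le r(x)+r(y)$ for commuting elements together with Aupetit's lemma, which is arguably more elementary and would transfer more readily to settings where the holomorphic functional calculus is less convenient. Your identification of the Aupetit transfer as the delicate point is accurate, and you handle it correctly in both directions.
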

\begin{proof} Without loss of generality we assume that $(\lambda b + a)$ is invertible for some $\lambda \in \mathbb{C}\setminus ( \sigma_{n_{0},a} \cup \sigma_{n_{0},b})$. Also form equality $(\lambda b)^{D, \sigma_{n_{0},b}}= \lambda^{-1} b^{D, \sigma_{n_{0},b}}$, we may assume that $\lambda = -1$, i.e.,  $(a-b)$ is invertible.
We have $$p_{\sigma_{n_{0},a}}=\frac{1}{2 \pi i} \int_{\Gamma_{n_{0}}}(z-a)^{-1}dz\mbox{ and }p_{\sigma_{n_{0},b}}= \frac{1}{2 \pi i} \int_{\Gamma_{n_{0}}}(u-b)^{-1}du,$$
for an accommodate countour $\Gamma_{n_{0}}$ around $\sigma_{n_{0},a} \cup \sigma_{n_{0},b}$.
Since $p_{\sigma_{n_{0},a}}=1-aa^{D,\sigma_{n_{0},a}}$ and $p_{\sigma_{n_{0},b}}=1-bb^{D,\sigma_{n_{0},b}}$, then it suffices to show
$$\left(\frac{1}{2 \pi i} \int_{\Gamma_{n_{0}}}(z-a)^{-1}dz\right)\left( \frac{1}{2 \pi i} \int_{\Gamma_{n_{0}}}(u-b)^{-1}du\right)=0.$$
From the equality $$(z-a)^{-1}-(u-b)^{-1}=\left( (u-z)+(a-b)\right)\left((z-a)^{-1}(u-b)^{-1}\right)$$ and the fact that $a-b$ is invertible, it follows that $(u-z)+(a-b)$ is invertible for small values of $u-z$. For this purpose, we may take $n_{0}$ such that for all $ z,u \in \Gamma_{n_{0}}, \ (u-z)+(a-b)$ is invertible.\\
For $(z,u)\in\Gamma_{n_{0}} \times \Gamma_{n_{0}}$, let $F(z,u)$ be the function defined by
$$F(z,u)=\left((u-z)+(a-b)\right)^{-1}\left((z-a)^{-1}-(u-b)^{-1}\right)=(z-a)^{-1}(u-b)^{-1}.$$
Then $F$ is continuous on the set $\Gamma_{n_{0}} \times \Gamma_{n_{0}}$. Moreover, the functions $$u \longmapsto \left((u-z)+(a-b)\right)^{-1}(z-a)^{-1}\mbox{ and }z \longmapsto \left((u-z)+(a-b)\right)^{-1}(u-b)^{-1}$$ are analytic in a neighbourhood of $\sigma_{n_{0},a}\cup \sigma_{n_{0},b}$. Consequently,
\begin{align*}
p_{\sigma_{n_{0},a}}p_{\sigma_{n_{0},b}}&=\left(\frac{1}{2\pi i}\right)^{2} \int\int_{\Gamma_{n_{0}}\times \Gamma_{n_{0}}} F(z,u) dz du\\
 &=\left(\frac{1}{2\pi i}\right)^{2} \int\int_{\Gamma_{n_{0}}\times \Gamma_{n_{0}}}\left((u-z)+(a-b)\right)^{-1}\left((z-a)^{-1}-(u-b)^{-1}\right) du dz \\
 &= \frac{1}{2 \pi i} \int_{\Gamma_{n_{0}}} \left( \frac{1}{2 \pi i} \int_{\Gamma_{n_{0}}} \left((u-z)+(a-b)\right)^{-1}(z-a)^{-1}du\right)dz \\
 & - \frac{1}{2 \pi i} \int_{\Gamma_{n_{0}}} \left( \frac{1}{2 \pi i} \int_{\Gamma_{n_{0}}} \left((u-z)+(a-b)\right)^{-1}(u-b)^{-1}dz\right)du=0.
\end{align*}
\end{proof}

\begin{thm}
Assume that the conditions of Theorem \ref{Theo1.1Djordjevic} are satisfied. If $ r(ap_{\sigma_{n_{0}},a}b^{D,\sigma_{n_{0},b}}) <(r(a^{D,\sigma_{n_{0}},a}b))^{-1}$, then for all $\lambda \in (D(0,(r(a^{D,\sigma_{n_{0},a}}b))^{-1}) \setminus \overline{D(0,r(ap_{\sigma_{n_{0}},a}b^{D,\sigma_{n_{0},b}}))})$, we have
$$(\lambda b -a )^{-1}=b^{\sigma_{n_{0},b}}(1-aa^{D,\sigma_{n_{0},a}}) \sum_{n=1}^{\infty}(ab^{D,\sigma_{n_{0},b}})^{n-1} \lambda^{-n}-a^{D,\sigma_{n_{0},a}}\sum_{n=0}^{\infty}(a^{D,\sigma_{n_{0},a}}b)\lambda^{n}.$$
\end{thm}
\begin{proof}
By virtue of Theorem \ref{Theo1.1Djordjevic}, we have $p_{\sigma_{n_{0},a}}=(1-p_{\sigma_{n_{0},b}})p_{\sigma_{n_{0},a}}=p_{\sigma_{n_{0},a}}(1-p_{\sigma_{n_{0},b}}).$
Then
\begin{align*}
(\lambda b - a) &= (\lambda b-a)p_{\sigma_{n_{0},a}}+(\lambda b - a)(1-p_{\sigma_{n_{0},a}})\\
&= (\lambda b-a)(1-p_{\sigma_{n_{0},b}})p_{\sigma_{n_{0},a}}+(\lambda b - a)(1-p_{\sigma_{n_{0},a}}) \\
&=\left(\lambda b(1-p_{\sigma_{n_{0},b}})-a\right)(1-p_{\sigma_{n_{0},b}})p_{\sigma_{n_{0},a}}+\left(\lambda b - a(1-p_{\sigma_{n_{0},a}})\right)(1-p_{\sigma_{n_{0},a}})\\
&= (\lambda - ab^{D,\sigma_{n_{0},b}})b(1-p_{\sigma_{n_{0},b}}) p_{\sigma_{n_{0},a}}+(\lambda b a^{D,\sigma_{n_{0},a}}-1)a(1-p_{\sigma_{n_{0},a}}).\\
&=(\lambda - ab^{D,\sigma_{n_{0},b}})b p_{\sigma_{n_{0},a}}+(\lambda b a^{D,\sigma_{n_{0},a}}-1)a(1-p_{\sigma_{n_{0},a}}).
\end{align*}
For $|\lambda| <(r(a^{D,\sigma_{n_{0},a}}b))^{-1}$, we have 
\begin{equation}\label{eq.6.1}
	(\lambda b - a)^{-1}(1-p_{\sigma_{n_{0},a}})=(\lambda b a^{D,\sigma_{n_{0},a}}-1)^{-1}a^{D,\sigma_{n_{0}},a}=-\sum_{k=0}^{\infty}\lambda^{k}b^{k}(a^{D,\sigma_{n_{0},a}})^{k+1}.
\end{equation}
Now if $|\lambda|> r(ap_{\sigma_{n_{0},a}}b^{D,\sigma_{n_{0},b}})$ then \begin{equation}\label{eq.6.2}
	 (\lambda-b^{D,\sigma_{n_{0},b}}ap_{\sigma_{n_{0},a}})^{-1}b^{D,\sigma_{n_{0},b}}p_{\sigma_{n_{0},a}}=\sum_{k=0}^{\infty} \lambda^{-k}a^{k}(b^{D,\sigma_{n_{0},b}})^{k+1}p_{\sigma_{n_{0},a}}.
\end{equation}
Therefore, if $ r(ap_{\sigma_{n_{0}},a}b^{D,\sigma_{n_{0},b}}) <(r(a^{D,\sigma_{n_{0}},a}b))^{-1}$ then combining (\ref{eq.6.1}) and (\ref{eq.6.2}) we obtain for all $\lambda \in (D(0,(r(a^{D,\sigma_{n_{0}},a}b))^{-1}) \setminus \overline{D(0,r(ap_{\sigma_{n_{0},a}}b^{D,\sigma_{n_{0},b}}))})$, 
$$(\lambda b -a )^{-1}=b^{\sigma_{n_{0},b}}p_{\sigma_{n_{0}}} \sum_{k=1}^{\infty}(ab^{D,\sigma_{n_{0},b}})^{k-1} \lambda^{-k}-a^{D,\sigma_{n_{0},a}}\sum_{n=0}^{\infty}\lambda^{k}(a^{D,\sigma_{n_{0},a}})^{k}b^{k}.$$
\end{proof}

Let $s \in \mathbb{N}$, for a sufficiently large $n_{0}$, we obtain $(\sigma_{n_{0}})^{s}$ on some open disk $D(0,r')$; where $0 <r' < \frac{1}{4}$ and $D(0,r') \cap (\sigma_{n_{0}}')^{s}= \emptyset$. As $(\sigma_{n_{0}})^{s} \subset D(0,r')$, we immediately get that $-(\sigma_{n_{0}})^{s} \subset D(0,r')$. In the next theorem, we denote by $\Lambda_{-(\sigma_{n_{0}})^{s}}$, the set $\Lambda_{-(\sigma_{n_{0}})^{s}}=D(0,r')\setminus (-(\sigma_{n_{0}})^{s})$.

\begin{thm} \label{Theopuissancesetlim}
Let $a \in \mathcal{A}$, $0 \in iso\, \sigma_{b}(a)$, and let $s,l,t\in\mathbb{N}$. Then
\begin{align*}
 \underset{\underset{\lambda \in \Lambda_{-(\sigma_{n_{0}})^{s}} \cap \Lambda_{n_{0}}}{\lambda \longrightarrow 0}}{\lim} ( \lambda +a^{s})^{-l}(a^{D,\sigma_{n_{0}}})^{t}=(a^{D,\sigma_{n_{0}}})^{sl+t}=(a^{sl+t})^{D,(\sigma_{n_{0}})^{sl+t}}.
 \end{align*}
\end{thm}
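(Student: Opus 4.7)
The plan is to split the statement into its two equalities. The second equality $(a^{D,\sigma_{n_{0}}})^{sl+t}=(a^{sl+t})^{D,(\sigma_{n_{0}})^{sl+t}}$ is immediate by applying Theorem \ref{Theo6.1} with exponent $sl+t$, so the substance of the proof lies in identifying the limit. I would attack this through the spectral decomposition induced by the idempotent $p_{\sigma_{n_{0}}}$, reducing everything to the corner subalgebra $(1-p_{\sigma_{n_{0}}})\mathcal{A}(1-p_{\sigma_{n_{0}}})$ where the relevant elements are genuinely invertible.

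Write $a=a_{1}+a_{2}$ with $a_{1}=a(1-p_{\sigma_{n_{0}}})$ invertible in $(1-p_{\sigma_{n_{0}}})\mathcal{A}(1-p_{\sigma_{n_{0}}})$ and $a_{2}=ap_{\sigma_{n_{0}}}$ Riesz. From the construction used in the proof of (ii)$\Rightarrow$(iii) of Theorem \ref{theok} one has $a^{D,\sigma_{n_{0}}}=a_{1}^{-1}(1-p_{\sigma_{n_{0}}})$, hence $(a^{D,\sigma_{n_{0}}})^{t}=a_{1}^{-t}(1-p_{\sigma_{n_{0}}})$. Since $a_{1}a_{2}=a_{2}a_{1}=0$, we get $a^{s}=a_{1}^{s}+a_{2}^{s}$. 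For $\lambda\in\Lambda_{-(\sigma_{n_{0}})^{s}}\cap\Lambda_{n_{0}}$ the element $\lambda+a^{s}$ is invertible in $\mathcal{A}$: by hypothesis $-\lambda\notin(\sigma_{n_{0}})^{s}$, while $-\lambda\in D(0,r')$ is far from $(\sigma_{n_{0}}')^{s}$ by the choice of $r'$, so $-\lambda\notin\sigma(a^{s})=(\sigma_{n_{0}})^{s}\cup(\sigma_{n_{0}}')^{s}$. Consequently
$$(\lambda+a^{s})^{-l}=(\lambda(1-p_{\sigma_{n_{0}}})+a_{1}^{s})^{-l}(1-p_{\sigma_{n_{0}}})+(\lambda p_{\sigma_{n_{0}}}+a_{2}^{s})^{-l}p_{\sigma_{n_{0}}},$$
with inverses taken in the respective corner subalgebras; the $p_{\sigma_{n_{0}}}$-piece is well defined because $\sigma_{p_{\sigma_{n_{0}}}\mathcal{A}p_{\sigma_{n_{0}}}}(a_{2})=\sigma_{n_{0}}$ by Theorem \ref{thm1}, whence $\sigma_{p_{\sigma_{n_{0}}}\mathcal{A}p_{\sigma_{n_{0}}}}(a_{2}^{s})=(\sigma_{n_{0}})^{s}$ avoids $-\lambda$.

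Multiplying on the right by $(a^{D,\sigma_{n_{0}}})^{t}=a_{1}^{-t}(1-p_{\sigma_{n_{0}}})$ kills the $p_{\sigma_{n_{0}}}$-block since $p_{\sigma_{n_{0}}}(1-p_{\sigma_{n_{0}}})=0$, so
$$(\lambda+a^{s})^{-l}(a^{D,\sigma_{n_{0}}})^{t}=(\lambda(1-p_{\sigma_{n_{0}}})+a_{1}^{s})^{-l}\,a_{1}^{-t}(1-p_{\sigma_{n_{0}}}).$$
Because $a_{1}^{s}$ is invertible in $(1-p_{\sigma_{n_{0}}})\mathcal{A}(1-p_{\sigma_{n_{0}}})$, continuity of inversion in that Banach subalgebra gives $(\lambda(1-p_{\sigma_{n_{0}}})+a_{1}^{s})^{-l}\longrightarrow a_{1}^{-sl}$ as $\lambda\to 0$, and the right-hand side converges to $a_{1}^{-(sl+t)}(1-p_{\sigma_{n_{0}}})=(a^{D,\sigma_{n_{0}}})^{sl+t}$, which is the first claimed equality.

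The main technical obstacle is the block-diagonal formula for $(\lambda+a^{s})^{-l}$: one has to verify invertibility of both blocks in their respective corners for the full admissible range of $\lambda$, and to justify that orthogonal blocks compose as $(x_{1}+x_{2})^{l}=x_{1}^{l}+x_{2}^{l}$ so that their inverses split accordingly. Once this decomposition is in hand the remainder is a routine passage to the limit, and the second equality of the statement is a direct citation of Theorem \ref{Theo6.1}.
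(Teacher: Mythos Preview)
Your argument is correct and takes a genuinely different route from the paper's own proof. The paper first invokes Theorem~\ref{limitexpofDRinv} for the element $-a^{s}$ to write
\[
(a^{D,\sigma_{n_{0}}})^{s}=-(-a^{s})^{D,-(\sigma_{n_{0}})^{s}}=\lim_{\lambda\to 0}(\lambda+a^{s})^{-1}(1-p_{-(\sigma_{n_{0}})^{s}}),
\]
then spends some effort proving, via a contour-integral computation, that $p_{-(\sigma_{n_{0}})^{s}}=p_{(\sigma_{n_{0}})^{s}}$, and finally appeals to \cite[Theorem~2.7(i)]{KolgsDr} for $p_{(\sigma_{n_{0}})^{s}}=p_{\sigma_{n_{0}}}$ before raising to the $l$-th power and multiplying by $(a^{D,\sigma_{n_{0}}})^{t}$. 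In other words, the paper routes the limit through the previously established machinery of Theorems~\ref{limitexpofDRinv} and~\ref{Theo6.1} together with an identification of several spectral idempotents.

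You bypass all of this by working directly in the Pierce decomposition $a=a_{1}+a_{2}$ relative to $p_{\sigma_{n_{0}}}$: once you know $a^{D,\sigma_{n_{0}}}=a_{1}^{-1}(1-p_{\sigma_{n_{0}}})$ and $a^{s}=a_{1}^{s}+a_{2}^{s}$, the block-diagonal inversion $(\lambda+a^{s})^{-l}=(\lambda(1-p_{\sigma_{n_{0}}})+a_{1}^{s})^{-l}(1-p_{\sigma_{n_{0}}})+(\lambda p_{\sigma_{n_{0}}}+a_{2}^{s})^{-l}p_{\sigma_{n_{0}}}$ plus annihilation of the $p_{\sigma_{n_{0}}}$-block by $(a^{D,\sigma_{n_{0}}})^{t}$ reduces the limit to bare continuity of inversion in the corner $(1-p_{\sigma_{n_{0}}})\mathcal{A}(1-p_{\sigma_{n_{0}}})$. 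This is more elementary and self-contained; the price is that you must check invertibility in both corners over the full range of $\lambda$ (which you do), whereas the paper gets this for free from Theorem~\ref{limitexpofDRinv}. One small remark: your citation of Theorem~\ref{thm1} for $\sigma_{p_{\sigma_{n_{0}}}\mathcal{A}p_{\sigma_{n_{0}}}}(a_{2})=\sigma_{n_{0}}$ is really a reference to the line $\sigma_{p\mathcal{A}p}(ap)=\sigma$ in its \emph{proof} (or to \cite[Theorem~1.2]{KolgsDr}), since the statement of Theorem~\ref{thm1}(ii) gives only $\sigma(ap)=\sigma$ in $\mathcal{A}$; the conclusion is unchanged because $0\in\sigma_{n_{0}}$.
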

\begin{proof}
By Theorem \ref{Theo6.1} (i), $a^{s}$ is generalized Drazin-Riesz invertible in $\mathcal{A}$, hence $-(a)^{s}$ is generalized Drazin-Riesz invertible in $\mathcal{A}$. So according to Theorem \ref{limitexpofDRinv}, we have
\begin{align}
\label{Eq6.4(1)}  \underset{\underset{\lambda \in \Lambda_{-(\sigma_{n_{0}})^{s}} \cap \Lambda_{n_{0}}}{\lambda \longrightarrow 0}}{\lim} ( \lambda +a^{s})^{-1}(1-p_{-(\sigma_{n_{0}})^{s}})& =  \underset{\underset{\lambda \in \Lambda_{-(\sigma_{n_{0}})^{s}} \cap \Lambda_{n_{0}}}{\lambda \longrightarrow 0}}{\lim} ( \lambda -(-a^{s}))^{-1}(1-p_{-(\sigma_{n_{0}})^{s}})\\
\label{Eq6.4(2)} &=-(-a^{s})^{D,-(\sigma_{n_{0}})^{s}}.
\end{align}
First of all, we show that $(-a^{s})^{D,-(\sigma_{n_{0}})^{s}}=-(a^{D,\sigma_{n_{0}}})^{s}$. By taking $\Gamma = C(0,r')$ the circle $C(0,r')$ which is the boundary of $\bar{D}(0,r')$ describing $\Lambda_{n_{0},s}$. We know that $\Gamma$ is surrounding $(\sigma_{n_{0}})^{s}$, thus, it surrounds also $-(\sigma_{n_{0}})^{s}$. Hence we have
\begin{align*}
p_{-(\sigma_{n_{0}})^{s}}&=\frac{1}{2\pi i} \int_{\Gamma} ( \lambda + a^{s})^{-1}d\lambda \\
                         &=\frac{-1}{2 \pi i} \int_{\Gamma}(- \lambda - a^{s})^{-1} d\lambda \\
                         &= \frac{1}{2 \pi i} \int_{\Gamma}(\mu-a^{s})^{-1} d\mu \\
                         &=p_{(\sigma_{n_{0}})^{s}}.
\end{align*}
On the other hand, we have
\begin{align*}
(-a^{s})^{D,-(\sigma_{n_{0}})^{s}}&=(-a^{s}-p_{-(\sigma_{n_{0}})^{s}})^{-1}(1-p_{-(\sigma_{n_{0}})^{s}}) \\
&=(-a^{s}-p_{(\sigma_{n_{0}})^{s}})^{-1}(1-p_{(\sigma_{n_{0}})^{s}})\\
&=-(a^{s}+p_{(\sigma_{n_{0}})^{s}})^{-1}(1-p_{(\sigma_{n_{0}})^{s}})\\
&=-(a^{s}-p_{(\sigma_{n_{0}})^{s}}+2p_{(\sigma_{n_{0}})^{s}})^{-1}(1-p_{(\sigma_{n_{0}})^{s}})\\
&=-(a^{s}-p_{(\sigma_{n_{0}})^{s}})^{-1}(1-p_{(\sigma_{n_{0}})^{s}})=-(a^{s})^{D,(\sigma_{n_{0}})^{s}}.
\end{align*}
And by Lemma \ref{Theo6.1} (i), we get that
$$-(a^{s})^{D,(\sigma_{n_{0}})^{s}}=-(a^{D,\sigma_{n_{0}}})^{s}=(-a^{s})^{D,-(\sigma_{n_{0}})^{s}}.$$
Also by considering (\ref{Eq6.4(1)}) and (\ref{Eq6.4(2)}), we obtain
$$(a^{D,\sigma_{n_{0}}})^{s}=\underset{\underset{\lambda \in \Lambda_{-(\sigma_{n_{0}})^{s}} \cap \Lambda_{n_{0}}}{\lambda \longrightarrow 0}}{\lim} ( \lambda +a^{s})^{-1}(1-p_{(\sigma_{n_{0}})^{s}}).$$
Hence we have $$(a^{D,\sigma_{n_{0}}})^{sl}=\underset{\underset{\lambda \in \Lambda_{-(\sigma_{n_{0}})^{s}} \cap \Lambda_{n_{0}}}{\lambda \longrightarrow 0}}{\lim} ( \lambda +a^{s})^{-l}(1-p_{(\sigma_{n_{0}})^{s}}).$$
Also, we have by virtue of \cite[Theorem 2.7(i)]{KolgsDr}, $p_{(\sigma_{n_{0}})^{s}}=p_{\sigma_{n_{0}}}$. Therefore,
\begin{align*}
(a^{D,\sigma_{n_{0}}})^{l}=\underset{\underset{\lambda \in \Lambda_{-(\sigma_{n_{0}})^{s}} \cap \Lambda_{n_{0}}}{\lambda \rightarrow 0}}{\lim}(\lambda + a^{s})^{-l}(1-p_{\sigma_{n_{0}}}).
\end{align*}
Thus
\begin{align*}
(a^{D,\sigma_{n_{0}}})^{sl+t}&=(a^{D,\sigma_{n_{0}}})^{sl}(a^{D,\sigma_{n_{0}}})^{t}\\
                             &= \underset{\underset{\lambda \in \Lambda_{-(\sigma_{n_{0}})^{s}} \cap \Lambda_{n_{0}}}{\lambda \rightarrow 0}}{\lim}(\lambda + a^{s})^{-l}(1-p_{\sigma_{n_{0}}}) (a^{D,\sigma_{n_{0}}})^{t}\\
                             &=\underset{\underset{\lambda \in \Lambda_{-(\sigma_{n_{0}})^{s}} \cap \Lambda_{n_{0}}}{\lambda \rightarrow 0}}{\lim}(\lambda + a^{s})^{-l}(a^{D,\sigma_{n_{0}}})^{t}(1-p_{\sigma_{n_{0}}})\\
                             &= \underset{\underset{\lambda \in \Lambda_{-(\sigma_{n_{0}})^{s}} \cap \Lambda_{n_{0}}}{\lambda \rightarrow 0}}{\lim}(\lambda + a^{s})^{-l}(a^{D,\sigma_{n_{0}}})^{t}.
\end{align*}
Finally, by Lemma \ref{Theo6.1}, part (i), we obtain that
$$(a^{D,\sigma_{n_{0}}})^{sl+t}=(a^{sl+t})^{D,(\sigma_{n_{0}})^{sl+t}}.$$
Hence
$$\underset{\underset{\lambda \in \Lambda_{-(\sigma_{n_{0}})^{s}} \cap \Lambda_{n_{0}}}{\lambda \rightarrow 0}}{\lim}(\lambda + a^{s})^{-l}(a^{D,\sigma_{n_{0}}})^{t}=(a^{D,\sigma_{n_{0}}})^{sl+t}=(a^{sl+t})^{D,(\sigma_{n_{0}})^{sl+t}}$$
as desired.
\end{proof}

If $\sigma_{n_{0}}=\{0\}$, we get directly \cite[Theorem 2.3]{Djor} by applying the last theorem. We finish by the case when $\mathcal{A}$ is a Banach algebra with an involution.
\begin{prop} \label{conv}
Let $\mathcal{A}$ be a semi-simple $\mathcal{C}^{*}$-Banach algebra with unit 1. Then $a \in \mathcal{A}$ is generalized Drazin-Riesz invertible if and only if $a^{*}$ is generalized Drazin-Riesz invertible.
\end{prop}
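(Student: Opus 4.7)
The plan is to reduce the statement to the spectral characterization of Theorem \ref{theok}, namely that $a$ is generalized Drazin-Riesz invertible if and only if $0\notin acc\,\sigma_b(a)$. Since $0$ is fixed by complex conjugation and the accumulation set transforms equivariantly under conjugation, it will suffice to establish the identity
\[
\sigma_b(a^*)=\overline{\sigma_b(a)}:=\{\bar\lambda\,:\,\lambda\in\sigma_b(a)\}.
\]
Once this is in hand, the proposition is immediate, because $0\in acc\,\sigma_b(a^*)$ iff $0\in acc\,\overline{\sigma_b(a)}=\overline{acc\,\sigma_b(a)}$ iff $0\in acc\,\sigma_b(a)$, and then Theorem \ref{theok} applied to both $a$ and $a^*$ yields the equivalence.

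To obtain the identity above, I would use the decomposition $\sigma_b(a)=\sigma_e(a)\cup acc\,\sigma(a)$ recalled in the introduction. The spectral part is standard in any unital Banach $*$-algebra: $\sigma(a^*)=\overline{\sigma(a)}$, whence $acc\,\sigma(a^*)=\overline{acc\,\sigma(a)}$. For the essential part, I would verify that in the $C^{*}$-setting the closed two-sided ideal $\overline{S}_{\mathcal{A}}$ is self-adjoint. This is where the $C^{*}$-assumption is genuinely used: closed two-sided ideals of a $C^{*}$-algebra are automatically $*$-closed, and in any semi-simple Banach $*$-algebra the socle $S_{\mathcal{A}}$ coincides with both the sum of its minimal left ideals and the sum of its minimal right ideals, so $(S_{\mathcal{A}})^{*}=S_{\mathcal{A}}$ and the same holds for $\overline{S}_{\mathcal{A}}$. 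Consequently the involution descends to the quotient $C^{*}$-algebra $\mathcal{A}/\overline{S}_{\mathcal{A}}$, so $\pi(a)$ is invertible iff $\pi(a^{*})=\pi(a)^{*}$ is invertible, which gives $a\in\Phi_{\mathcal{A}}\Leftrightarrow a^{*}\in\Phi_{\mathcal{A}}$ and more generally $\sigma_e(a^{*})=\overline{\sigma_e(a)}$.

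Putting the two ingredients together yields
\[
\sigma_b(a^{*})=\sigma_e(a^{*})\cup acc\,\sigma(a^{*})=\overline{\sigma_e(a)}\cup\overline{acc\,\sigma(a)}=\overline{\sigma_b(a)},
\]
and the proof concludes as indicated in the first paragraph. The main technical point, and the only one where the hypothesis that $\mathcal{A}$ is a $C^{*}$-algebra intervenes, is the verification that $\overline{S}_{\mathcal{A}}$ is self-adjoint so that the Fredholm (equivalently the essential spectrum) notion is stable under the involution; everything else is a routine transport of the spectral characterization of Theorem \ref{theok} through complex conjugation.
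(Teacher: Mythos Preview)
Your proof is correct but follows a different route from the paper. The paper invokes characterization (i) of Theorem \ref{theok}: starting from an idempotent $p$ commuting with $a$ such that $a+p$ is invertible and $ap$ is Riesz, it takes adjoints to obtain the idempotent $p^{*}$ commuting with $a^{*}$, with $a^{*}+p^{*}=(a+p)^{*}$ invertible and $a^{*}p^{*}=(ap)^{*}$ Riesz (the latter because $\sigma_e((ap)^{*})=\overline{\sigma_e(ap)}=\{0\}$). You instead use characterization (vii), establishing the global identity $\sigma_b(a^{*})=\overline{\sigma_b(a)}$ via $\sigma_b=\sigma_e\cup acc\,\sigma$ and then reading off the conclusion at $0$. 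Both arguments ultimately rest on the same technical point---that $\overline{S}_{\mathcal{A}}$ is self-adjoint in the $C^{*}$-setting, so that the essential spectrum satisfies $\sigma_e(a^{*})=\overline{\sigma_e(a)}$---and you make this point explicit where the paper leaves it implicit. Your approach yields the stronger intermediate statement about the full Browder spectrum, while the paper's approach is slightly more concrete in that it exhibits the idempotent for $a^{*}$ directly.
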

\begin{proof}
 By Theorem \ref{theok}, $a$ is generalized Drazin-Riesz invertible if and only if there exists a commuting idempotent $p \in \mathcal{A}$  with $a$, such that $a+p$ is invertible and $ap$ is Riesz in $\mathcal{A}$.\\
This implies that $(a+p)^{*}=a^{*}+p^{*}$ is invertible and $(ap)^{*}=a^{*}p^{*}=p^{*}a^{*}$ is Riesz, where $p^{*}$ is a commuting idempotent with $a^{*}$. Indeed, $((a+p)^{-1})^{*}=((a+p)^{*})^{-1}=(a^{*}+p^{*})^{-1}$, and $p^{*}=(p^{2})^{*}=(p^{*})^{2}$, also $p^{*}a^{*}=(ap)^{*}=(pa)^{*}=a^{*}p^{*}$. On the other hand, $\sigma_{e}(a^{*}p^{*})=\{ \overline{\lambda}  :  \lambda \in \sigma_{e}(ap) \}=\{0\}$, therefore $a^{*}p^{*}$ is a Riesz element. Another application of Theorem \ref{theok} leads to conclude that $a^{*}$ is generalized Drazin-Riesz invertible in $\mathcal{A}$. The converse goes similarly since $(a^{*})^{*}=a$.
\end{proof}

\end{document}